%% file: main.tex
\documentclass[11pt]{article}

\usepackage{amsmath}
\usepackage{dsfont}
\usepackage{mathrsfs}
\newcommand{\conn}{\mathrel{\longleftrightarrow}}
\input{prefix}

\BibLatexMode{%
   \addbibresource{template.bib}
}

\begin{document}

\title{Capacities and Hitting Times\\ for Simple and (critical) Branching Random Walk Sausages}

\author{Joelle Ishak}

\date{\today}

\maketitle

\begin{abstract}
For a finite set $A\subseteq\mathbb Z^d$, we study the capacities of a sausage obtained by transporting $A$ along the Minkowski sum of simple or critical branching random walks. We first establish
quantitative capacity estimates for the random walk sausage and the branching sausage. This in turn, gives us estimates
for the probability that Minkowski sums of a mixture of walks hit a finite set in
high dimensions. 
Our approach is based on an annular
decomposition of a critical branching random walk into successive waves,
which may have further applications.
\end{abstract}

\section{Introduction}
\label{section-introduction}

In this paper, we study quantitative estimates for the capacities of
random walk and critical branching random walk sausages in high
dimensions. Given a finite set $A\subseteq\mathbb Z^d$ with $0\in A$ and a random
cloud of sites $\mathcal R$, the associated sausage is the Minkowski sum
\begin{equation}
\label{def:sausage}
    \mathcal R+A:=\left\{x+a: x\in\mathcal R,a\in A \right\}.
\end{equation}
Our main objective is to determine, non asymptotically, how the
capacity of $\mathcal R+A$ depends on the size of $\mathcal R$ and on the
\textit{potential-theoretic} size of $A$. We focus in this article on the case where $\mathcal R$ is the range of a simple random walk or of a (critical) branching random walk, as well as Minkowski sums of such ranges.

The study of sausages as in~\eqref{def:sausage} originates in classical works on Brownian motion.
Kesten, Spitzer, and Whitman (see \cite{ItoMcKean1974}, p.~252) showed
that, in dimension $d\geq 3$, the volume of a compact set transported along a Brownian trajectory
grows linearly in time, with asymptotic rate given by its electrostatic
capacity. Spitzer \cite{Spitzer} later established a discrete
counterpart: if $\mathcal S[0,n]$ is the range of a simple
random walk on $\mathbb Z^d$ ($d\geq 3$) up to time $n$, then for every finite set $A\subseteq\mathbb Z^d$,
\begin{equation*}
    \lim_{n\to\infty}\frac{\left|\mathcal S[0,n]+A\right| }{n}=\operatorname{Cap}(A),
\end{equation*}
 where $|\cdot|$ denotes the cardinality of a set and $\operatorname{Cap}(A)$ is the discrete capacity of $A$, defined by
\begin{equation*}
    \operatorname{Cap}(A)
    =\left(\inf\left\{ \mathcal E_g(\nu):\nu \text{ is a probability measure on }A
    \right\}\right)^{-1},
\end{equation*}
where the electrostatic energy reads
\begin{equation*}
    \mathcal E_g(\nu)
    =
    \sum_{x\in A}\sum_{y\in A}g(x,y)\nu(x)\nu(y) ,
\end{equation*}
with $g$ is the Green's function of the random walk,

This link between volume and capacity was recently extended to a large family of capacities by Asselah, Okada, Schapira, and Sousi in \cite{asselah2024branching}. 
For $\gamma>0$, define the $\gamma$-capacity of a finite set
$A\subseteq\mathbb Z^d$ by
\begin{equation}
\label{eq-gamma-capacity-definition}
    \operatorname{Cap}_{\gamma}(A)
    =
    \left(\inf
    \left\{
        \mathcal E_{k_\gamma}(\nu):
        \nu \text{ is a probability measure on }A
    \right\}\right)^{-1},
\end{equation}
where
\begin{equation*}
    \mathcal E_{k_\gamma}(\nu)
    =
    \sum_{x\in A}
    \sum_{y\in A}
    k_\gamma(x,y)\nu(x)\nu(y), \qquad k_\gamma(x,y):=\frac{1}{\left( 1+\|x-y\|\right)^\gamma}.
\end{equation*}
The main result of \cite{asselah2024branching}
shows that, for any $N\geq1$ and $d\geq2N+1$,
\begin{equation}
\label{eq-introduction-aoss-limit}
    \lim_{n\to\infty}
    \frac{
        \operatorname{Cap}_{d-2(N-1)}
        \left(\mathcal S[0,n]+A \right)}{n}
    \asymp
    \operatorname{Cap}_{d-2N}(A).
\end{equation}
Here, $f \asymp g$ means that there exist absolute positive constants $C_1$ and $C_2$ satisfying
$C_1 g \leq f \leq C_2 g$. 
One of our goal here is to
obtain a quantitative, non asymptotic version of
\eqref{eq-introduction-aoss-limit}.

Moreover, we study the corresponding sausage for a critical branching
random walk. A critical branching random walk is defined as follows. We begin with a critical Galton–Watson tree $T_c$ with offspring distribution $\mu$, i.e., an ordered rooted tree in which each node has a random number of children distributed according to $\mu$, with mean one. For each edge, we independently assign a random variable distributed according to $\theta$, a symmetric probability measure on $\mathbb{Z}^d$, which represents the spatial increment associated with that edge. For any $u \in T_c$, we define $V_c(u)$ as  the
sum of the increments along its ancestral line, which we denote $\{v:v \preceq u\}$. The collection $\left(V_c(u)\right)_{u \in T_c}$ is called a critical branching random walk indexed by $T_c$. We write $\mathcal T_c$ for the resulting range. 

\paragraph{Notations.}
Throughout the paper, all random walks are simple symmetric
nearest-neighbour random walks on $\mathbb Z^d$. The critical branching
random walks are indexed by critical Galton--Watson trees with a fixed
offspring distribution $\mu$ of mean one and positive finite variance,
and their spatial increment distribution $\theta$  is the simple symmetric nearest-neighbour measure on $\mathbb Z^d$. Unless stated otherwise, all
random walks, branching trees, and spatial increments appearing
together are mutually independent.

We use $\|\cdot\|$ for the Euclidean norm on $\mathbb Z^d$ and write $B(x,R):=\{y\in\mathbb Z^d:\|y-x\|\leq R\}$ and $B_R :=B(0,R)$.
For a finite nonempty set $A$, let $\operatorname{diam}(A):=\max_{x,y\in A}\|x-y\|$.

\subsection{Main results on capacities}

Let us denote by $\mathcal S(B_R)$ the range of a simple
random walk stopped upon exiting $B_R$, and by $\mathcal T_c(B_R)$ the
range of a critical branching random walk obtained by stopping each
ancestral line when it first exits $B_R$. In other words,
\begin{equation*}
    \mathcal T_c(B_R)
    :=
    \left\{
        u\in\mathcal T_c:
        V_c(v)\in B_R
        \text{ for every }v\preceq u
    \right\}.
\end{equation*}
Our first result give quantitative capacity estimates for both the random walk sausage and the critical branching random walk sausage.

\begin{theorem}
\label{theorem-main-capacity-introduction}
Let $A\subseteq\mathbb Z^d$ be finite and nonempty.

\begin{enumerate}[label=\textup{(\roman*)}]
\item Let $N\geq2$ and $d\geq2N+1$. If $n\geq2\operatorname{diam}(A)^2$,
then
\begin{equation}
\label{eq-introduction-fixed-rw-capacity}
\mathbb E\left[
    \operatorname{Cap}_{d-2(N-1)}
    \left( \mathcal S[0,n]+A \right)
\right]
\asymp
n\operatorname{Cap}_{d-2N}(A).
\end{equation}
Moreover, if $R\geq2\operatorname{diam}(A)$,
then
\begin{equation}
\label{eq-introduction-stopped-rw-capacity}
\mathbb E\left[
    \operatorname{Cap}_{d-2(N-1)}
    \left(
        \mathcal S(B_R)+A
    \right)
\right]
\asymp
R^2\operatorname{Cap}_{d-2N}(A).
\end{equation}

\item Let $N\geq3$ and $d\geq2N+1$. For every sufficiently large $R$ (see Remark~\ref{remark-introduction-brw-scale} below),
\begin{equation}
\label{eq-introduction-stopped-brw-capacity}
\mathbb E\left[
    \operatorname{Cap}_{d-2(N-2)}
    \left(\mathcal T_c(B_R)+A\right)\right]
\asymp
R^2\operatorname{Cap}_{d-2N}(A).
\end{equation}
\end{enumerate}
\end{theorem}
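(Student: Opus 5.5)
The plan is to sandwich $\operatorname{Cap}_{\gamma}$ using its variational characterization. For the lower bound we test the energy with an explicit probability measure. For the upper bound we use the dual characterization: $\operatorname{Cap}_\gamma(K)\le \nu(K)$ whenever $\nu$ is a measure on $K$ with $\sum_{y}k_\gamma(x,y)\nu(y)\ge 1$ on $K$. Alternatively, $\operatorname{Cap}_\gamma(K)\lesssim \sum_{x\in K}\mathbb P(\text{hit})$-type comparisons, obtained by integrating $k_\gamma$ against an equilibrium-like measure. Write $\gamma=d-2(N-1)$ in (i) and $\gamma=d-2(N-2)$ in (ii).

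Let $\nu_A$ be the minimizer for $\operatorname{Cap}_{d-2N}(A)$, and $\mathcal R$ the range (random walk or BRW), with local times $\ell(x)$.

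\textbf{Lower bound.} Define the random measure on $\mathcal R+A$ by
\[
\rho:=\sum_{x}\ell(x)\,\nu_A(\cdot-x),
\]
which has total mass equal to the length $L$ of the path: $n$ in the fixed time case, $\asymp R^2$ in expectation for $\mathcal S(B_R)$, and $\asymp R^2$ conditionally on survival for $\mathcal T_c(B_R)$. Its $\gamma$-energy is
\[
\sum_{a,b}\nu_A(a)\nu_A(b)\sum_{x,y}\ell(x)\ell(y)\,k_\gamma(x+a,y+b).
\]
The key computation is that $\mathbb E\sum_{x,y}\ell(x)\ell(y)k_\gamma(x-y+z)$ is of order $L\cdot G^{(N)}(z)$, where $G^{(N)}(z)\asymp(1+\|z\|)^{-(d-2N)}$. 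For a walk, the two-point function is the Green function $\|z\|^{2-d}$, and convolving $k_\gamma$ with it gains $\|z\|^{2}$. This yields $k_{\gamma-2}=k_{d-2N}$, valid for $\|z\|\lesssim\sqrt n$ thanks to $n\ge 2\operatorname{diam}(A)^2$, and needs $\gamma>2$, i.e.\ $d\ge 2N+1$. For the BRW, the pair correlation of occupation is $\|z\|^{4-d}$ (a tree path between two vertices), which gains $\|z\|^4$, matching the shift to $d-2(N-2)$.

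The expected energy is therefore $\lesssim L\cdot\mathcal E_{k_{d-2N}}(\nu_A)=L/\operatorname{Cap}_{d-2N}(A)$, up to a diagonal term $L\cdot O(1)$. That term is dominated because $\operatorname{Cap}_{d-2N}(A)\lesssim$ something absorbed by the diameter constraint, or because $\mathcal E\ge1$ up to constants. We then apply $\operatorname{Cap}(K)\ge \rho(K)^2/\mathcal E(\rho)$ together with Cauchy--Schwarz/Jensen,
\[
\mathbb E[\rho^2/\mathcal E]\ge (\mathbb E\rho)^2/\mathbb E\,\mathcal E
\]
(a Paley--Zygmund style step), which gives $\gtrsim L\operatorname{Cap}_{d-2N}(A)$. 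For the BRW we also need the second moment of the total mass conditioned appropriately, and the survival probability to $\partial B_R$. I expect the normalization is the unconditioned expectation, which combines $\mathbb P(\text{reach})\asymp R^{-2}$ and conditional size $R^4$.

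\textbf{Upper bound.} We use subadditivity of capacity. Cover $\mathcal R$ by boxes of side $r=\operatorname{diam}(A)$; the random walk visits about $L/r^2$ of them for $r^2\le L$, and $\mathcal R+A$ lies in their $r$-neighbourhoods. A single piece has $\operatorname{Cap}_\gamma(\mathcal S[0,r^2]+A)\lesssim r^2\operatorname{Cap}_{d-2N}(A)$. I would prove this by showing that $\operatorname{Cap}_\gamma(K+A)$ is comparable to the probability that an independent object with ``dimension'' $2$ (walk) or $4$ (BRW) started far away hits it, times a scaling factor. Equivalently, $\operatorname{Cap}_{\gamma}(\mathcal S+A)\asymp\mathbb E$-hitting quantities, which via the last-exit decomposition reduce to $\operatorname{Cap}_{d-2N}(A)$ times the expected time spent. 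Summing, with the sum of $\operatorname{Cap}$ over pieces counted with multiplicity $\le$ local time, gives $L\operatorname{Cap}_{d-2N}(A)$.

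\textbf{Main obstacle.} The BRW upper bound is the hardest step, since the tree can revisit regions heavily. Here I would use the annular (wave) decomposition of $\mathcal T_c(B_R)$ mentioned in the abstract. Successive generations of excursions between dyadic annuli give independent BRW pieces with controlled expected counts, and subadditivity applies piece by piece.
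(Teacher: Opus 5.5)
Your lower bounds are correct, and they take a different and more elementary route than the paper. Take $\rho=\sum_x\ell(x)\nu_A(\cdot-x)$ and apply Cauchy--Schwarz: $\mathbb E[\operatorname{Cap}_\gamma]\ge(\mathbb E\rho)^2/\mathbb E\,\mathcal E(\rho)$. The expected energy is at most $\mathbb E[\text{mass}]\sum_{a,b}\nu_A(a)\nu_A(b)\,h(a-b)$, where:
\begin{itemize}
\item for the walk, $h=2\,g*k_\gamma$, using the Markov property at each time $t<H_R$;
\item for the BRW, $h=2\,g*k_\gamma+\sigma^2\,g*g*k_\gamma$. Split pairs by their most recent common ancestor and apply many-to-one in the two independent subtrees ($\sigma^2$ is the offspring variance).
\end{itemize}
Since $h\lesssim k_{\gamma-2}$ (walk) or $h\lesssim k_{\gamma-4}$ (BRW) uniformly in $z$, including $z=0$, you get $\mathbb E\,\mathcal E\lesssim\mathbb E[\text{mass}]/\operatorname{Cap}_{d-2N}(A)$. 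There is no need to restrict to $\|z\|\lesssim\sqrt n$. Two corrections:
\begin{itemize}
\item Your reason for discarding an additive $O(L)$ diagonal term is wrong. Since $\mathcal E_{k_{d-2N}}(\nu_A)=1/\operatorname{Cap}_{d-2N}(A)\le1$, such a term would not be absorbed. Fortunately there is no such term: the diagonal is weighted by $\nu_A\otimes\nu_A$ and is already of the right form.
\item For the BRW, use the unconditioned mass $\mathbb E|\mathcal T_c(B_R)|=\mathbf E_0[H_R]\asymp R^2$, with no survival conditioning. The mass conditioned on reaching $\partial B_R$ is of order $R^4$, not $R^2$.
\end{itemize}
This bypasses \eqref{eq-introduction-aoss-limit} with subadditivity, Lemma~\ref{lemma-stopped-to-full-brw}, Zhu's estimate and Proposition~\ref{prop-capacity-several-stopped-rw}. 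It also gives the lower bounds with no condition relating $n$ or $R$ to $\operatorname{diam}(A)$.

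The upper bounds have a genuine gap: the per-piece estimate is the whole content, and you do not supply it. For (i), the probe must be a sum of $N-1$ independent walks, because by \eqref{eq-introduction-sum-rw-hitting} that is the object governed by $\operatorname{Cap}_{d-2(N-1)}$. A single walk or a single BRW only sees $\operatorname{Cap}_{d-2}$ or $\operatorname{Cap}_{d-4}$. The step that closes the estimate is not a last-exit decomposition. It is the inclusion $\mathcal S(B_R)\subseteq\mathcal S$ plus reflection: for $\|z\|$ a large multiple of $R$, $\mathbb E\operatorname{Cap}_{d-2(N-1)}(\mathcal S(B_R)+A)\lesssim\|z\|^{d-2(N-1)}\,\mathbb P(z+\mathcal S^1+\cdots+\mathcal S^{N-1}-\mathcal S\conn A)$. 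Then \eqref{eq-introduction-sum-rw-hitting} with $N$ walks gives $R^2\operatorname{Cap}_{d-2N}(A)$. This proves the stopped bound directly for every $R\ge2\operatorname{diam}(A)$. The fixed-time bound then needs time-indexed pieces, namely successive exits from balls of radius $\operatorname{diam}(A)$ counted by Lemma~\ref{lemma-number-stopped-pieces}. Spatial boxes do not work, because their traces do not have the law of a stopped walk.

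For (ii), subadditivity over a wave decomposition of $\mathcal T_c(B_R)$ is circular. Each wave or block is again a stopped BRW sausage whose $\operatorname{Cap}_{d-2(N-2)}$ is exactly the unknown. At the bottom scale $r\asymp\operatorname{diam}(A)$, the crude bound $\operatorname{Cap}_{\gamma}(K+A)\le|K|(1+\operatorname{diam}(A))^4\operatorname{Cap}_{d-2N}(A)$ from \eqref{capofsum} and \eqref{comparisonof2cap} loses a factor $\operatorname{diam}(A)^4$. The missing ingredient is the mixed hitting estimate of Proposition~\ref{prop-mixed-hitting-probability}. After probing with $N-2$ walks and folding $\mathcal T_c(B_R)\subseteq\mathcal T_c$ into the probe, you must show $\mathbb P(z+\mathcal S^1+\cdots+\mathcal S^{N-2}+\mathcal T_c\conn A)\lesssim\operatorname{Cap}_{d-2N}(A)/\|z\|^{d-2(N-1)}$, which \eqref{eq-introduction-sum-rw-hitting} does not cover. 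The paper proves this by an annular decomposition of the random walks around $z$, not of the BRW. For the induction these annuli are refined into local pieces of radius $r_i^{1/a_m}$. Zhu's \eqref{eq-introduction-brw-hitting} with \eqref{bcap-equiv-(d-4)cap} is applied to the localized targets, and part (i) bounds their capacities. The inner-ball term is where the condition $R\ge C_N\operatorname{diam}(A)^{a_{N-2}}$ comes from. The BRW wave decomposition appears in the paper only later, for sums of two or more BRWs.
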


\begin{remark}
\label{remark-introduction-brw-scale}
In part~\textup{(ii)} of
Theorem~\ref{theorem-main-capacity-introduction}, the lower bound
holds under a linear condition of the form $R\geq c N\operatorname{diam}(A)$.
For the upper bound, we require $R$ to satisfy, for $N\geq 3$,
\begin{equation*}
    R\geq C_N\operatorname{diam}(A)^{a_{N-2}},
\end{equation*}
where the exponent $a_m$ is defined in Equation~\eqref{eq-a-m} below and is obtained through an iterative argument in the proof (it corresponds to $a_m= \alpha_{m,1}$ in Equation~\eqref{eq-alpha}).
We note that for $N=3$, $a_1=3$.
\end{remark}

\subsection{Relation with hitting probabilities}

Our approach to Theorem~\ref{theorem-main-capacity-introduction} is
based on the connection between capacities and hitting
probabilities. For a simple random walk  on $\mathbb Z^d$, with $d\geq3$,
it is classical that (see  \cite[Proposition~6.5.1]{modernintro}), for a finite set $A$ containing the origin,
\begin{equation}
\label{eq-introduction-rw-hitting}
 \text{for }\|z\| \geq 2\operatorname{diam}(A), \qquad 
\mathbb P\left(
        z+\mathcal S \conn A
\right)
\asymp
\frac{
    \operatorname{Cap}_{d-2}(A)
}{
    \|z\|^{d-2}
},
\end{equation}
where $\mathcal{S} $ is the infinite time-horizon of the random walk, and for any $A,B\subseteq \mathbb{Z}^d$, we write $A\conn B$ if $A\cap B\neq\emptyset$.

An analogous relation holds for a critical branching random walk on
$\mathbb Z^d$ when $d\geq5$; see \cite[Theorem~1.3]{Zhu1}:
\begin{equation}
\label{eq-introduction-brw-hitting}
 \text{for }\|z\| \geq 2\operatorname{diam}(A), \qquad 
\mathbb P\left( z+\mathcal T_c
   \conn A\right)
\asymp
\frac{
    \operatorname{BCap}(A)
}{
    \|z\|^{d-2}
}.
\end{equation}
Here, $\operatorname{BCap}$ denotes branching capacity; we refer to \cite[Definition~3.1]{Zhu1}
for its definition. 
An important result of \cite[Corollary~1.4]{asselah2023localtimescapacitytransient}
is that the branching capacity is comparable to the $(d-4)$-capacity:
\begin{equation}\label{bcap-equiv-(d-4)cap}
    \operatorname{BCap}(A)
    \asymp
    \operatorname{Cap}_{d-4}(A).
\end{equation}

Asselah, Okada, Schapira, and Sousi \cite[Theorem~1.5]{asselah2024branching} obtained a corresponding
characterization for sums of independent random walk ranges. 
If
$\mathcal S^1,\ldots,\mathcal S^m$ are independent simple random walk
ranges and $d\geq2m+1$, then, for every finite set
$A\subseteq\mathbb Z^d$ containing the origin,
\begin{equation}
\label{eq-introduction-sum-rw-hitting}
\text{for }\|z\|\geq 2\operatorname{diam}(A), \qquad
\mathbb P\left(
        z+\mathcal S^1+\cdots+\mathcal S^m \conn A
\right)
\asymp
\frac{
    \operatorname{Cap}_{d-2m}(A)
}{
    \|z\|^{d-2m}
}.
\end{equation}

Our second main result is to extend AOSS's result \eqref{eq-introduction-sum-rw-hitting} to sums containing both simple random walk and
critical branching random walk ranges.

\begin{theorem}
\label{proposition-main-hitting-introduction}
Let $m\geq0$, $k\geq1$, and $d\geq2m+4k+1$.
Let $\mathcal S^1,\ldots,\mathcal S^m$ and
$\mathcal T_c^1,\ldots,\mathcal T_c^k$ be, respectively, $m$
independent simple random walk ranges and $k$ independent critical
branching random walk ranges, all started from the origin.

Then, for every finite nonempty set
$A\subseteq\mathbb Z^d$ containing the origin and every
$z\in\mathbb Z^d$ sufficiently far from $A$ (see Remark~\ref{remark-introduction-mixed-starting-point}),
\begin{equation*}
\mathbb P\bigg(
        z+\sum_{i=1}^{m}\mathcal S^i
        +\sum_{j=1}^{k}\mathcal T_c^j
    \conn A
\bigg)
\asymp
\frac{
    \operatorname{Cap}_{d-2m-4k}(A)
}{
    \|z\|^{d-2m-2k}
}.
\end{equation*}
\end{theorem}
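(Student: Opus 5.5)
Our plan is to argue by induction on the number of summands, peeling off one walk at a time and reducing to the base hitting estimates \eqref{eq-introduction-rw-hitting}, \eqref{eq-introduction-brw-hitting} combined with \eqref{bcap-equiv-(d-4)cap}. The mechanism is the observation that, conditionally on the other ranges, the event $z+\mathcal R'+\mathcal T_c^k\conn A$ is the event that $z+\mathcal T_c^k$ hits the set $A-\mathcal R'$, where $\mathcal R'$ denotes the Minkowski sum of the remaining ranges. Since $A-\mathcal R'$ is infinite, we localise it. Decompose $\mathcal R'$ dyadically as $\mathcal R'\cap(B_{2^{j+1}}\setminus B_{2^j})$ and apply \eqref{eq-introduction-brw-hitting}--\eqref{bcap-equiv-(d-4)cap} to each piece. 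For $2^j\le\|z\|/2$ this gives a contribution of order $\operatorname{Cap}_{d-4}(A-\mathcal R'_j)/\|z\|^{d-2}$. Pieces of $\mathcal R'$ at distance comparable to or larger than $\|z\|$ are handled by a cruder union bound using the decay of the Green-type kernel.

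\textbf{Upper bound.} Taking expectations, we need to estimate $\mathbb E[\operatorname{Cap}_{d-4}(A-\mathcal R'\cap B_r)]$. This is exactly what Theorem~\ref{theorem-main-capacity-introduction} and its iterations provide.
\begin{itemize}
\item Each simple random walk factor, stopped at scale $r$, raises the capacity exponent by $2$ at a cost of $r^2$.
\item Each branching factor raises the exponent by $4$ at a cost of $r^2$, since $\operatorname{Cap}_{d-2(N-2)}$ versus $\operatorname{Cap}_{d-2N}$ in part (ii).
\end{itemize}
Iterating through all the ranges yields
\[
\mathbb E\bigl[\operatorname{Cap}_{d-4}(A+\mathcal R'\cap B_r)\bigr]\lesssim r^{2(m+k-1)}\operatorname{Cap}_{d-2m-4k}(A)
\]
(with the symmetric $-\mathcal R'$ handled by symmetry of $\theta$). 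We then sum $r^{2(m+k-1)}/\|z\|^{d-2}$ over dyadic $r\lesssim\|z\|$. This gives $\|z\|^{2(m+k-1)-(d-2)}=\|z\|^{-(d-2m-2k)}$, and the far pieces contribute at most the same order because $d\ge 2m+4k+1$ makes the tail sums converge. The iteration requires the capacity theorem for sets of the form $A+(\text{stopped ranges})$, whose diameter is of order $r$. This is where the constraint ``$z$ sufficiently far from $A$'' enters: the polynomial scale condition $R\ge C\operatorname{diam}^{a}$ of Remark~\ref{remark-introduction-brw-scale} must hold at each stage. Below that scale we use a monotone cruder bound.

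\textbf{Lower bound.} We restrict to the event that every range other than $\mathcal T_c^k$ contributes only its portion inside $B_{\|z\|/4}$ (the stopped ranges), and that $\mathcal T_c^k$ itself is conditioned appropriately. We then apply the lower bound in \eqref{eq-introduction-brw-hitting}: the hitting probability is at least $c\operatorname{Cap}_{d-4}(A-\mathcal R'(B_{\|z\|/4}))/\|z\|^{d-2}$. Taking expectations and using the lower bounds of Theorem~\ref{theorem-main-capacity-introduction}, which only need the linear scale condition, gives $\gtrsim \|z\|^{2(m+k-1)}\operatorname{Cap}_{d-2m-4k}(A)/\|z\|^{d-2}$, as desired.

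\textbf{Main obstacle.} The hard step is the iteration of capacity estimates for sums of several ranges. After one step, the set $A+\mathcal S(B_r)$ is random and has large diameter, so the next application of Theorem~\ref{theorem-main-capacity-introduction}(ii) requires its upper bound at scale $\ge \operatorname{diam}^{a}$. This is inconsistent with using the same scale $r$. We would first try to prove directly a multi-range version of Theorem~\ref{theorem-main-capacity-introduction}, namely
\[
\mathbb E\operatorname{Cap}_{d-2(N-m-2k)}\bigl(A+\textstyle\sum\mathcal S^i(B_r)+\sum\mathcal T_c^j(B_r)\bigr)\asymp r^{2(m+k)}\operatorname{Cap}_{d-2N}(A),
\]
by the same energy/second-moment method used for the single-range case. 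For the upper bound we would use the annular wave decomposition of the branching walk. If this fails, we would fall back on Jensen-type concavity arguments over the independent ranges.
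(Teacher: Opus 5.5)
\textbf{There is a genuine gap in the upper bound.} It sits in the step you call a ``cruder union bound''.

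\emph{The far part.} Your dyadic sum $\sum_r r^{2(m+k-1)}/\|z\|^{d-2}$ is dominated by $r\asymp\|z\|$. So the part of $\mathcal R'$ at scale $\gtrsim\|z\|$ contributes at the same order $\|z\|^{-(d-2m-2k)}$ as the main term. This includes the portion of $A-\mathcal R'$ lying close to $z$. That contribution must therefore be controlled with the correct dependence on $A$.

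A union bound over points does not achieve this. Using $\mathbb P(x\in\mathcal R')\lesssim(1+\|x\|)^{-(d-2(m+k-1))}$ and $\mathbb P(z+\mathcal T_c^k\conn A-x)\lesssim\operatorname{Cap}_{d-4}(A)\|z-x\|^{2-d}$ gives $\operatorname{Cap}_{d-4}(A)/\|z\|^{d-2m-2k}$. This exceeds the target by the factor $\operatorname{Cap}_{d-4}(A)/\operatorname{Cap}_{d-2m-4k}(A)$, which is of order $\operatorname{diam}(A)^{2m+4k-4}$ when $A$ is a ball. No separation condition on $z$ can absorb this, because the powers of $\|z\|$ coincide. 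Coarse-graining at scale $\operatorname{diam}(A)$ does not help either: it produces the capacity of a ball rather than of $A$.

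A correct treatment would need a multiscale decomposition of $A-\mathcal R'$ around $z$. But $\mathcal R'$ is a Minkowski sum of several ranges and has no Markov or branching structure along which to decompose it.

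\emph{The near part.} The bound $\mathbb E[\operatorname{Cap}_{d-4}(A-\mathcal R'\cap B_r)]\lesssim r^{2(m+k-1)}\operatorname{Cap}_{d-2m-4k}(A)$ does not follow from Theorem~\ref{theorem-main-capacity-introduction}, for three reasons.
\begin{enumerate}
\item $\mathcal R'\cap B_r$ is not contained in a sum of ranges stopped at radius $O(r)$, since large summands can cancel.
\item Iterating part (ii) is blocked by its polynomial scale condition, as you noticed.
\item There is no ``energy/second-moment'' single-range proof to imitate. The paper's stopped-sausage upper bounds are themselves derived from hitting estimates for sums (AOSS and Proposition~\ref{prop-mixed-hitting-probability}).
\end{enumerate}
This near part can be repaired. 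Bound $\operatorname{Cap}_{d-4}$ by $r^{d-4}\,\mathbb P(w+\mathcal S^a+\mathcal S^b+\mathcal R'\conn A)$ with $\|w\|\asymp r$ via AOSS, and induct through the case of $m+2$ walks and $k-1$ branching walks, which preserves $d\geq 2m+4k+1$. But this does nothing for the far part.

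\textbf{What the paper does instead.} It never puts a multi-walk sum into the target.
\begin{enumerate}
\item By reflection it moves a \emph{single} walk to the target side, writing $z+\cdots\conn\mathcal S^1+A$ or $z+\cdots\conn\mathcal T_c^1+A$.
\item It decomposes that walk around $z$, not around the origin. For a simple random walk these are annular excursions defined by stopping times. For a branching walk they are successive entrance/exit stopping lines (waves), with $\mathbb E[\#\eta_i^l\mid\mathcal F_{\eta_i^{l-1}}]\leq(1-\gamma_d)\#\eta_i^{l-1}$. Hence level $i$ carries total weight $\lesssim(r_i/\|z\|)^{d-2}$ for $r_i<\|z\|$. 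This is exactly the cost of approaching $z$ that your crude bound loses.
\item Each annular piece is cut into blocks of radius $\ell_i\asymp r_i^{1/a}$. Their number is $\lesssim 1+r_i^2/\ell_i^2$ by the second-moment renewal bound. Then $\operatorname{dist}(z,\text{block})\asymp r_i\geq C\ell_i^{a}$, so the induction hypothesis for the remaining walks applies to each block plus $A$. It is combined with the stopped-sausage upper bound of order $\ell_i^2$ times the lower-index capacity of $A$. This local refinement is precisely what resolves the scale obstruction you identified.
\item Only the ball $B(z,r_{-I})$, with $r_{-I}$ a power of $\operatorname{diam}(A)$, is handled crudely. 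The resulting loss is absorbed because $r_{-I}\ll\|z\|$, and this is where the exponents $\alpha_{m,k}$ come from.
\end{enumerate}

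\textbf{Your lower bound} is essentially the paper's, with two adjustments. Localize at radius $\varepsilon\|z\|$ with $\varepsilon\lesssim 1/(m+k)$ rather than $\|z\|/4$; otherwise the target's diameter can exceed $\|z\|$. And use nested radii when iterating the stopped-range lower bounds.
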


\begin{remark}
\label{remark-introduction-mixed-starting-point}
In Theorem~\ref{proposition-main-hitting-introduction}, the
statement that $z$ is sufficiently far from $A$ means that
\begin{equation*}
    \|z\|
    \geq
    C_{m,k}\operatorname{diam}(A)^{\alpha_{m,k}},
\end{equation*}
for some explicit exponent $\alpha_{m,k}$ defined by induction, see~\eqref{eq-chi-beta-several-brw}-\eqref{eq-alpha} in the proof.
The precise values of these exponents are not expected to be optimal, as they arise from the successive losses in the induction argument. The constant $C_{m,k}$ may depend on $d$, $m$, $k$, and on
the underlying random walk and offspring distributions, but neither on $A$ nor on $z$.
\end{remark}

The proof of AOSS's result \eqref{eq-introduction-sum-rw-hitting} builds on ideas introduced by Fitzsimmons and Salisbury
\cite{FS1989}, who constructed suitable measures using random times for
additive Markov processes. 
That method
does not apply when the sum contains a critical branching random walk, and we proceed with a different argument.
In particular, one important contribution of the paper is an annular
decomposition of a critical branching random walk into successive
waves. For a simple random walk, annular excursions are naturally
defined through successive stopping times. A branching random walk,
however, may enter and leave the same annulus through many different
ancestral lines, so no single stopping-time decomposition is available.
We replace here stopping times by successive entrance and exit stopping
lines. The main observation is that the corresponding entrance populations have geometrically
decreasing expectations, which makes it possible to sum the
contributions of all waves.
We refer to Section~\ref{section-capacity-sausages} for further details.

Let us mention that 
a recent result of Bai, Chen, and Peres \cite{baiperes} also yields the
comparison in Theorem~\ref{proposition-main-hitting-introduction}.
Their method uses fractal percolation as a canonical model for random
sets whose long-range hitting probabilities are governed by capacity. They first establish a capacity characterization for
Minkowski sums of independent fractal percolations and then derive a
transfer principle for sums of independent random sets. This framework
applies, in particular, to ranges of simple random walks and critical
branching random walks.
Our approach on the other hand is inductive and specifically designed for random walks, simple or branching. 
It is genuinely different, both conceptually and technically, from the approach of \cite{baiperes}. As such it complements our understanding of hitting time for sums of walks. 

\smallskip
We conclude this section by mentioning an interesting corollary of 
Theorem~\ref{theorem-main-capacity-introduction}.
\begin{corollary}
For any $\|z\|\geq 2R$, and any $R \geq C \text{diam}(A)^3$, 
    \begin{equation*}
        \mathbb{P}\left(z+\mathcal{T}_c\conn \mathcal{S}(B_R)+A\right) \asymp \mathbb{P}\left(z+\mathcal{S}\conn \mathcal{T}_c(B_R)+A\right).
    \end{equation*}
\end{corollary}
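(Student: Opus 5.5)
The plan is to condition on the inner random set and apply the one-set hitting estimates \eqref{eq-introduction-brw-hitting} and \eqref{eq-introduction-rw-hitting} to it, with the relevant capacity being that of a sausage. Theorem~\ref{theorem-main-capacity-introduction} then controls the expected capacities of these sausages. The two sides will reduce to the same quantity $R^2\operatorname{Cap}_{d-6}(A)/\|z\|^{d-2}$, which corresponds to $N=3$ in both parts of the theorem. Thus $d\geq 7$, and $\operatorname{Cap}_{d-6}$ is the capacity appearing in both theorem parts. The standing assumption $R\geq C\operatorname{diam}(A)^3$ is exactly the scale condition of Remark~\ref{remark-introduction-brw-scale} for $N=3$, since $a_1=3$.

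\textbf{Left-hand side.} By independence of $\mathcal T_c$ and $\mathcal S(B_R)$, condition on $\mathcal S(B_R)$. The set $K:=\mathcal S(B_R)+A$ is finite and contains the origin. We have $K\subseteq B(0,R+\operatorname{diam}(A))$, so $\operatorname{diam}(K)\leq 2R+2\operatorname{diam}(A)$.

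We want to apply \eqref{eq-introduction-brw-hitting} to $K$. Its hypothesis $\|z\|\geq 2\operatorname{diam}(K)$ is not literally guaranteed by $\|z\|\geq 2R$. This needs handling in either of two ways. One is to enlarge the constant, i.e.\ read the hypothesis as $\|z\|\geq CR$. The other is to note that the hitting estimate only needs $z$ at distance of order the radius of $K$ around $0$. Indeed, the usual proof (last-exit decomposition, with $g_{\mathcal T}(x)\asymp\|x\|^{2-d}$ for the branching Green function) only uses $\|z-y\|\asymp\|z\|$ for $y\in K$, which holds when $\|z\|\geq 2(R+\operatorname{diam}A)$. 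Up to constants that is our regime.

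Then, using \eqref{bcap-equiv-(d-4)cap},
\[
\mathbb P\left(z+\mathcal T_c\conn K\,\middle|\,\mathcal S(B_R)\right)\asymp \frac{\operatorname{Cap}_{d-4}(K)}{\|z\|^{d-2}}.
\]
Taking expectations and using \eqref{eq-introduction-stopped-rw-capacity} with $N=3$ (valid because $d-2(N-1)=d-4$, $d\geq 7$, and $R\geq 2\operatorname{diam}(A)$) gives $\asymp R^2\operatorname{Cap}_{d-6}(A)/\|z\|^{d-2}$.

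\textbf{Right-hand side.} Condition on $\mathcal T_c(B_R)$ and set $K':=\mathcal T_c(B_R)+A\subseteq B(0,R+\operatorname{diam}A)$. By \eqref{eq-introduction-rw-hitting}, with the same caveat about the distance condition, the conditional hitting probability is $\asymp\operatorname{Cap}_{d-2}(K')/\|z\|^{d-2}$. Taking expectations and using \eqref{eq-introduction-stopped-brw-capacity} with $N=3$, where $d-2(N-2)=d-2$, gives $\asymp R^2\operatorname{Cap}_{d-6}(A)/\|z\|^{d-2}$. This is valid for $R\geq C\operatorname{diam}(A)^{3}$. Comparing the two sides proves the corollary.

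\textbf{Main obstacle.} The only delicate point is the distance hypothesis in the hitting estimates. The expected capacities come directly from Theorem~\ref{theorem-main-capacity-introduction}, and the power $\|z\|^{d-2}$ matches on both sides.

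I would first check that \eqref{eq-introduction-rw-hitting} and \eqref{eq-introduction-brw-hitting} remain valid, with uniform constants, whenever $K\subseteq B(0,r)$ and $\|z\|\geq 2r$. For the random walk this follows from the last-exit formula $\mathbb P(z+\mathcal S\conn K)=\sum_{y}g(z,y)\,e_K(y)$ together with $g(z,y)\asymp\|z\|^{2-d}$. For the branching random walk the analogous representation in \cite{Zhu1} gives the same. If only $\|z\|\geq 2R$ is allowed and $\operatorname{diam}A$ is comparable to $R$, a constant adjustment suffices, since $R\geq C\operatorname{diam}(A)^3$ forces $\operatorname{diam}A\ll R$.
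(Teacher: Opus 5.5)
Your proposal is correct and follows essentially the same route as the paper: apply Theorem~\ref{theorem-main-capacity-introduction} with $N=3$ to get $\mathbb E[\operatorname{Cap}_{d-4}(\mathcal S(B_R)+A)]\asymp R^2\operatorname{Cap}_{d-6}(A)\asymp\mathbb E[\operatorname{Cap}_{d-2}(\mathcal T_c(B_R)+A)]$, then convert both expected capacities into hitting probabilities via \eqref{eq-introduction-rw-hitting}, \eqref{eq-introduction-brw-hitting} and \eqref{bcap-equiv-(d-4)cap}. Your extra care about the separation hypothesis (the sausages have diameter up to about $2R$, so $\|z\|\geq 2R$ does not literally give $\|z\|\geq 2\operatorname{diam}(K)$) addresses a point the paper passes over silently, and the constant adjustment you propose is adequate.
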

This means that, at the level of hitting probabilities, the simple random walk and the critical branching random walk can be exchanged: one may stop either process inside a ball while letting the other run over its full range.

\begin{proof}
    In Theorem~\ref{theorem-main-capacity-introduction}, Equations \eqref{eq-introduction-stopped-rw-capacity} and \eqref{eq-introduction-stopped-brw-capacity} with $N=3$ imply that, for $d\geq 7$, any finite nonempty set $A \subset \mathbb{Z}^d$, and any $R \geq C \text{diam}(A)^3$,
    \begin{equation*}
        \mathbb{E}\left[\mathrm{Cap}_{d-4}\left(\mathcal{S}(B_R)+A\right)\right]\asymp \mathbb{E}\left[\mathrm{Cap}_{d-2}\left(\mathcal{T}_c(B_R)+A\right)\right].
    \end{equation*}
    Consequently, combining the relation \eqref{eq-introduction-rw-hitting} between hitting probabilities and capacities, Zhu's \eqref{eq-introduction-brw-hitting} and ASS's  \eqref{bcap-equiv-(d-4)cap}, this concludes the proof.
\end{proof}

\section{Outline of the proofs}
\label{section-outline-proofs}

The proofs of Theorems~\ref{theorem-main-capacity-introduction} and \ref{proposition-main-hitting-introduction} are organized
around a sequence of quantitative capacity and hitting estimates, and
rely on the relation between hitting probabilities and capacities,
as illustrated by Equations
\eqref{eq-introduction-rw-hitting},
\eqref{eq-introduction-brw-hitting}, and
\eqref{eq-introduction-sum-rw-hitting}. 
The proofs will rely on some intermediate
statements that are repeatedly used in the body of the paper.
We now outline the main ideas of the proofs.

\subsection*{Idea of the proof of part~\textup{(i)} of
Theorem~\ref{theorem-main-capacity-introduction}}

The main idea is to decompose the walk into successive pieces
stopped when their displacement first exceeds a prescribed radius. 
The number of pieces completed before time $n$ forms a renewal process. The capacity of
one stopped piece is controlled using AOSS's result
\eqref{eq-introduction-sum-rw-hitting}. Iterating the same argument for several
independent random walks yields the following auxiliary estimate.

\begin{proposition}
\label{prop-capacity-several-stopped-rw}
Let $N\geq2$, let $1\leq k\leq N-1$, and assume that $d\geq2N+1$.
Let $\mathcal S^1,\ldots,\mathcal S^k$ be independent simple random
walk ranges, all started from the origin. For every finite nonempty set
$A\subseteq\mathbb Z^d$ and every
$R\geq2\operatorname{diam}(A)$,
\begin{equation}
\label{eq-upper-several-stopped-rw}
\mathbb E\left[
    \operatorname{Cap}_{d-2(N-k)}
    \left(
        \mathcal S^1(B_R)+\cdots+\mathcal S^k(B_R)+A
    \right)
\right]\lesssim
R^{2k}\operatorname{Cap}_{d-2N}(A).
\end{equation}
If, in addition,
$R\geq2\cdot5^{k-1}\operatorname{diam}(A)$, then
\begin{equation}
\label{eq-lower-several-stopped-rw}
\mathbb E\left[
    \operatorname{Cap}_{d-2(N-k)}
    \left(
        \mathcal S^1(B_R)+\cdots+\mathcal S^k(B_R)+A
    \right)
\right]\gtrsim
R^{2k}\operatorname{Cap}_{d-2N}(A).
\end{equation}
\end{proposition}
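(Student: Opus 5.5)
Both bounds come from the link between capacities and hitting probabilities. For a finite set $K$ and $\gamma=d-2(N-k)$ with $N-k\geq1$ and $d\geq 2(N-k)+1$, applying \eqref{eq-introduction-sum-rw-hitting} with $m=N-k$ independent walks $\mathcal S'^1,\dots,\mathcal S'^{N-k}$ (independent of everything else) gives
\begin{equation*}
\operatorname{Cap}_{d-2(N-k)}(K)\asymp \|z\|^{d-2(N-k)}\,\mathbb P\left(z+\mathcal S'^1+\cdots+\mathcal S'^{N-k}\conn K\right)
\end{equation*}
for $\|z\|\geq 2\operatorname{diam}(K)$. We apply this with $K=\mathcal S^1(B_R)+\cdots+\mathcal S^k(B_R)+A$, whose diameter is at most $2kR+\operatorname{diam}(A)$. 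Taking $\|z\|\geq 2(2k+1)R$ and averaging over the stopped walks, the expected capacity becomes comparable to $\|z\|^{d-2(N-k)}$ times the probability that $z-\mathcal S'^1-\cdots-\mathcal S'^{N-k}$ meets $\mathcal S^1(B_R)+\cdots+\mathcal S^k(B_R)+A$. By symmetry of the increments, this is the event that $z+\sum_i\mathcal S'^i+\sum_j(-\mathcal S^j(B_R))$ hits $A$.

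\textbf{Upper bound.} Drop the stopping: $\mathcal S^j(B_R)\subseteq\mathcal S^j[0,\tau_R^j]$. I would prove the estimate by induction on $k$, peeling off one stopped walk at a time. Condition on $\mathcal S'$ and on $\mathcal S^2,\dots,\mathcal S^k$, and write the event as: the stopped walk $\mathcal S^1(B_R)$ meets the random set $K'=A-z-\sum_i\mathcal S'^i-\sum_{j\geq2}(-\mathcal S^j(B_R))$. A union bound over the points of $\mathcal S^1(B_R)$ gives
\begin{equation*}
\mathbb P(\mathcal S^1(B_R)\conn K')\leq \sum_{y\in B_R}G_{B_R}(0,y)\,\mathbb 1_{y\in K'}.
\end{equation*}
A cruder alternative, since $\mathbb E\,\tau_R\asymp R^2$, is to use the renewal structure: $\mathcal S^1(B_R)$ is a walk piece of length about $R^2$, so the event effectively adds one more walk of length $R^2$. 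The cleanest route is to compare with the hitting probability of $N-k+1$ full walks in an annulus of radius $R$. The added walk should contribute the factor $R^{2}$: an infinite walk from distance $\|z\|$ spends $R^2$ time near scale $R$ only with probability $(R/\|z\|)^{d-2}$. I would therefore instead take $\|z\|\asymp R$, which is allowed since the equivalence holds for $\|z\|\geq 2\operatorname{diam}(K)\asymp R$. With $\|z\|\asymp R$, each stopped walk $\mathcal S^j(B_R)$ is dominated by the full walk $\mathcal S^j$. The probability is then at most that of $N$ full walks hitting $A$ from distance $\asymp R$, which by \eqref{eq-introduction-sum-rw-hitting} is $\lesssim\operatorname{Cap}_{d-2N}(A)/R^{d-2N}$. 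Multiplying by $\|z\|^{d-2(N-k)}\asymp R^{d-2N+2k}$ gives $R^{2k}\operatorname{Cap}_{d-2N}(A)$. This requires $2\operatorname{diam}(A)\lesssim R$, which holds by assumption.

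\textbf{Lower bound.} Again fix $\|z\|\asymp R$, specifically $\|z\|=c R$ for a suitable constant $c$. We need
\begin{equation*}
\mathbb P\Big(z+\sum_i\mathcal S'^i-\sum_j\mathcal S^j(B_R)\conn A\Big)\gtrsim\frac{\operatorname{Cap}_{d-2N}(A)}{R^{d-2N}}.
\end{equation*}
I would proceed by induction on $k$, replacing one stopped walk at a time by a full walk. The probability with $\mathcal S^k$ full is at most the probability with $\mathcal S^k(B_R)$ plus the probability that the contribution from times after $\tau_R^k$ hits. After exit, the walk's position $\mathcal S^k(\tau_R)$ lies at distance $R$, so the remaining sum is a fresh configuration started from $z'=z-\mathcal S^k(\tau_R)$. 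We need this error term to be a small fraction of the main term. That fails if $\|z'\|$ can be comparable to $\|z\|$; the fix is to choose $\|z\|\ll R$. The remaining stopped walks can then be handled by a second-moment bound, or by restricting to paths that stay in $B_{R/5}$, which is presumably where the condition $R\geq 2\cdot5^{k-1}\operatorname{diam}(A)$ enters: each induction step shrinks the admissible radius by a factor $5$. Concretely, at step $j$ choose $z$ with $\|z\|\asymp R/5^{j}$. The event using full walks has probability $\asymp\operatorname{Cap}/\|z\|^{d-2N}$, and the part of this event produced after any walk's exit from $B_R$ is bounded by the upper-bound estimate at distance $\gtrsim R$. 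That part is smaller by the factor $(\|z\|/R)^{d-2N}\leq 5^{-(d-2N)}$, which is small.

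The main obstacle I expect is making this subtraction rigorous. The post-exit configuration also involves the pre-exit ranges of the other walks, so the error is not simply a fresh hitting problem. I would handle this with a union bound over which walk exits first, combined with the upper bound \eqref{eq-upper-several-stopped-rw} at lower $k$.
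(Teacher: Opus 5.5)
Your upper bound, in the form you finally settle on, is exactly the paper's argument: take $\|z\|\asymp R$ with a large constant, use $\mathcal S^j(B_R)\subseteq\mathcal S^j$, then apply \eqref{eq-introduction-sum-rw-hitting} first with $N-k$ walks and then with $N$ walks.

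The lower bound, however, has a genuine gap. Converting $\mathbb E[\operatorname{Cap}_{d-2(N-k)}(K)]$ into a hitting probability via \eqref{eq-introduction-sum-rw-hitting} requires $\|z\|\geq 2\operatorname{diam}(K)$. Since each stopped range reaches $\partial B_R$, $\operatorname{diam}(K)\geq R$, so $\|z\|\gtrsim R$ is forced. For such $z$, the post-exit part of the full-walk event is not a small error. After $\mathcal S^j$ leaves $B_R$, the effective starting point $z-S^j_{\tau_R}$ is still at distance comparable to $\|z\|$. So the best bound you can give the error is of the same order $\operatorname{Cap}_{d-2N}(A)/\|z\|^{d-2N}$ as the full-walk probability itself.

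Worse, the statement you are proving implies that the stopped event has probability of order $(R/\|z\|)^{2k}$ times the full-walk probability. When $\|z\|$ is a large multiple of $R$, the post-exit part is therefore the bulk of the event, not a correction. ``Full minus error'' cannot produce a positive lower bound with the unspecified constants of \eqref{eq-introduction-sum-rw-hitting}. Your remedy $\|z\|\asymp R/5^{j}$ contradicts the condition $\|z\|\geq 2(2k+1)R$ you imposed at the outset. For such $z$, which sits inside the region occupied by $K$, the capacity/hitting-probability conversion is simply not available. The obstruction is already present for $k=1$, so your induction has no base case.

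The missing idea is to avoid subtraction entirely and induct at the level of capacities, letting the target set absorb one walk at a time. The base case $k=1$ is the lower bound in \eqref{eq-introduction-stopped-rw-capacity}. The paper obtains it from the renewal decomposition of $\mathcal S[0,n]$ into i.i.d.\ copies of $\mathcal S(B_R)$. Subadditivity gives $\mathbb E[\operatorname{Cap}_{d-2(N-1)}(\mathcal S[0,n]+A)]\leq\mathbb E[\mathcal N_n+1]\,\mathbb E[\operatorname{Cap}_{d-2(N-1)}(\mathcal S(B_R)+A)]$, with $\mathbb E[\mathcal N_n+1]\lesssim n/R^2$ by Lemma~\ref{lemma-number-stopped-pieces}. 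The left side is $\gtrsim n\operatorname{Cap}_{d-2N}(A)$ by \eqref{eq-introduction-aoss-limit} and the subadditive ergodic theorem.

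For the step $k\to k+1$, take $R\geq 2\cdot5^{k}\operatorname{diam}(A)$ and $R_1:=R/5^{k}$. Monotonicity lets you replace $\mathcal S^{k+1}(B_R)$ by $\mathcal S^{k+1}(B_{R_1})$, and the random set $A_1:=\mathcal S^{k+1}(B_{R_1})+A$ becomes the new target. The induction hypothesis with $N-1$ in place of $N$, applied conditionally on $A_1$, gives $\gtrsim R^{2k}\operatorname{Cap}_{d-2(N-1)}(A_1)$. The $k=1$ bound at radius $R_1$ then gives $\mathbb E[\operatorname{Cap}_{d-2(N-1)}(A_1)]\gtrsim R_1^2\operatorname{Cap}_{d-2N}(A)$.

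The factor $5$ comes from checking $2\cdot5^{k-1}\operatorname{diam}(A_1)\leq 2\cdot5^{k-1}(2R_1+\operatorname{diam}(A))\leq R$, not from moving the starting point. The induction hypothesis must be stated for all admissible $N$ simultaneously, since it is invoked with $N-1$.
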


\subsection*{Idea of the proof of part~\textup{(ii)} of
Theorem~\ref{theorem-main-capacity-introduction}}

Let us now explain the main idea behind the proof of the branching estimate \eqref{eq-introduction-stopped-brw-capacity}, which deals with a $d-2(N-2)$-capacity of $\mathcal{T}_c(B_R)+A$. By AOSS's result \eqref{eq-introduction-sum-rw-hitting}, the $d-2(N-2)$-capacity of the sausage $\mathcal{T}_c(B_R)+A$ can be estimated through the probability that this sausage is hit by the sum of $(N-2)$ independent random walks. Since $\mathcal{T}_c(B_R) \subset \mathcal{T}_c$, the upper bound in \eqref{eq-introduction-stopped-brw-capacity} can be reduced to the hitting probability of $A$ by the sum of several random walk ranges and one critical branching random walk range. So the next step is to obtain an estimate for this sum.

The main hitting estimate available for the branching random walk is
Zhu's estimate \eqref{eq-introduction-brw-hitting}, which applies when
the target is finite and sufficiently far from the starting point of
$\mathcal{T}_c$. This suggests localizing the random walk ranges and regarding their sum with $A$ as a finite target
for the branching random walk. For the lower bound, this localization
simply consists in stopping each random walk when it exits a prescribed
ball. Zhu's estimate, together with ASS's result  \eqref{bcap-equiv-(d-4)cap}, then yields the
$(d-4)$-capacity of a sum of stopped random walk ranges, which is exactly
the quantity controlled by
Proposition~\ref{prop-capacity-several-stopped-rw}.

For the upper bound, we need to localize the whole random walk range
rather than only a single stopped piece. We first carry out the argument
for one random walk and then iterate it. Following an idea used in
\cite[Proof of Proposition~1.6]{asselah2024branching}
to study intersections between two random sets, we decompose the random
walk range into successive annular traces. Each trace is thus localized
in a bounded region, and the scales are chosen so that the starting point
remains sufficiently far from the corresponding random target for Zhu's
hitting estimate to be applicable. Combining this annular localization
with Proposition~\ref{prop-capacity-several-stopped-rw} yields the following
hitting estimate.



\begin{proposition}
\label{prop-mixed-hitting-probability}
Let $m\geq1$ and $d\geq2m+5$. Let
$\mathcal S^1,\ldots,\mathcal S^m$ be independent simple random walk
ranges and let $\mathcal T_c$ be an independent critical branching
random walk range. If $ \|z\|
    \geq
    C_m\operatorname{diam}(A)^{a_m}$,
where $a_m$ is defined in Equation~\eqref{eq-a-m} below ($a_m:=\alpha_{m,1}$ in Equation \eqref{eq-alpha}), then
\begin{equation*}
\mathbb P\left(
        z+\mathcal S^1+\cdots+\mathcal S^m+\mathcal T_c
   \conn A
\right)\asymp
\frac{
    \operatorname{Cap}_{d-2(m+2)}(A)
}{
    \|z\|^{d-2(m+1)}
}.
\end{equation*}
\end{proposition}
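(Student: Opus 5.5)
We condition on the random walk ranges and treat $\mathcal S^1+\cdots+\mathcal S^m+A$ as a random target for the branching walk. By symmetry of the increments,
\[
\mathbb P\left(z+\mathcal S^1+\cdots+\mathcal S^m+\mathcal T_c\conn A\right)
=\mathbb P\left(z+\mathcal T_c\conn A-\mathcal S^1-\cdots-\mathcal S^m\right),
\]
and $-\mathcal S^i$ has the same law as $\mathcal S^i$. So it suffices to estimate the probability that $z+\mathcal T_c$ hits the random set $\mathcal S^1+\cdots+\mathcal S^m+A$. The natural tools are Zhu's estimate \eqref{eq-introduction-brw-hitting}, the comparison \eqref{bcap-equiv-(d-4)cap}, and Proposition~\ref{prop-capacity-several-stopped-rw} with $N=m+2$ and $k=m$. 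Here $d\geq 2m+5=2N+1$, so that proposition bounds $\mathbb E[\operatorname{Cap}_{d-4}(\mathcal S^1(B_R)+\cdots+\mathcal S^m(B_R)+A)]$ by a constant times $R^{2m}\operatorname{Cap}_{d-2m-4}(A)$, in both directions.

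\textbf{Lower bound.} Set $R=\|z\|/4$, which satisfies $R\geq 2\cdot 5^{m-1}\operatorname{diam}(A)$ once $C_m$ is large. The stopped sum $\mathcal S^1(B_R)+\cdots+\mathcal S^m(B_R)+A$ lies in $B_{mR+\operatorname{diam}(A)}$. Its diameter is at most a small constant times $\|z\|$, so after adjusting the factor $1/4$ to a suitably small constant depending on $m$, Zhu's estimate applies conditionally on the walks. Together with \eqref{bcap-equiv-(d-4)cap}, this gives a conditional hitting probability of at least $c\operatorname{Cap}_{d-4}(\cdot)/\|z\|^{d-2}$. Taking expectations and applying \eqref{eq-lower-several-stopped-rw} yields
\[
\gtrsim \frac{R^{2m}\operatorname{Cap}_{d-2m-4}(A)}{\|z\|^{d-2}}\asymp \frac{\operatorname{Cap}_{d-2m-4}(A)}{\|z\|^{d-2m-2}},
\]
which is the required lower bound.

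\textbf{Upper bound.} I would induct on $m$ and, for each walk, use an annular decomposition. For one walk $\mathcal S=\mathcal S^m$, take dyadic scales $R_j=2^j r_0$ and decompose $\mathcal S$ into the pieces between its successive exits of $B_{R_j}$ and $B_{R_{j+1}}$. By the strong Markov property, each piece is a walk started on $\partial B_{R_j}$ and stopped on leaving $B_{R_{j+1}}$. Its contribution falls into one of two cases.

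\emph{Far pieces, $R_{j+1}\leq\|z\|/2$.} The piece is localized far from $z$, so Zhu's estimate applies to the localized target. Proposition~\ref{prop-capacity-several-stopped-rw} with $k=m$ then bounds its contribution by $R_j^{2}\cdot(\text{localized capacity})/\|z\|^{d-2}$.

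\emph{Near pieces, $R_j\gtrsim\|z\|$.} Here the branching walk from $z$ must first reach distance comparable to $R_j$ in order to see the piece. I would first bound the contribution by the probability that the piece, viewed as a single walk from a point at distance about $R_j$ from $z-\mathcal T_c-\sum_{i<m}\mathcal S^i - A$, hits it. By the induction hypothesis and \eqref{eq-introduction-rw-hitting}, this should be of order $\operatorname{Cap}_{d-2m-4}(A)R_j^{-(d-2m-2)}\cdot(\text{correction})$. These terms sum geometrically in $j$ because $d-2m-2>2$.

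The other walks $\mathcal S^1,\dots,\mathcal S^{m-1}$ are localized in the same way, iteratively. Each iteration forces the inner scale $r_0$ to be a power of $\operatorname{diam}(A)$ so that Zhu's condition $\|z\|\geq 2\operatorname{diam}(\text{target})$ holds at every stage. This is where the exponent $a_m$ comes from, defined recursively, with $a_1=3$.

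\textbf{Main obstacle.} The hard step is the upper bound in the regime where the random target is not small compared with $\|z\|$. There Zhu's estimate is unavailable, and the random walk pieces near $z$ must be handled by reversing roles, letting the walk hit a branching sausage. I would first try to control this using the union bound over annuli above, combined with the crude bound $\mathbb E|\mathcal T_c\cap B(x,r)|\lesssim r^{4}$ for the branching range. This should give the extra decay needed for summability, at the cost of a worse exponent $a_m$.
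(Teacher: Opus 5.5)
Your lower bound is correct and is the paper's argument: stop all walks at radius $\varepsilon_m\|z\|$, apply Zhu's estimate conditionally with \eqref{bcap-equiv-(d-4)cap}, then use \eqref{eq-lower-several-stopped-rw} with $N=m+2$, $k=m$.

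The upper bound has a genuine gap. Your dyadic pieces are centred at the origin, but Zhu's estimate needs the target localized relative to $z$. Take a piece started on $\partial B_{R_j}$ and stopped on leaving $B_{R_{j+1}}$, with $R_j\asymp\|z\|$. It enters $B(z,\|z\|/2)$ with probability of order one, and it can even meet $z-A$, in which case $z+\mathcal T_c$ hits it at the root. So the claim that the branching walk ``must first reach distance comparable to $R_j$'' is false, and this near regime is exactly where the difficulty lies.

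Your remedy does not close the gap. Bound the piece by a full walk $x+\mathcal S'$ from its starting point $x$. The event becomes $\{z+x+\mathcal S'+\sum_{i<m}\mathcal S^i+\mathcal T_c\conn A\}$. That is the $m$-walk statement you are proving, from a new starting point, not the $(m-1)$-walk induction hypothesis. If you read it instead as a single walk hitting $A-z-\mathcal T_c-\sum_{i<m}\mathcal S^i$, then \eqref{eq-introduction-rw-hitting} does not apply: that target is not localized, and for $m\geq2$ it is infinite.

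Finally, the bound $\mathbb E|\mathcal T_c\cap B(x,r)|\lesssim r^4$ is too lossy. By the many-to-one formula, the expected number of sites of $z+\mathcal T_c$ in $B(z,r)$ is at most $\sum_{y\in B(z,r)}g(z,y)\lesssim r^2$. With $r^4$, the inner term already forces $\|z\|\gtrsim\operatorname{diam}(A)^4$ when $m=1$. The statement fixes $a_m$ through \eqref{eq-a-m}, so a worse exponent would not prove it.

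What is missing is the paper's $z$-centred scheme. Induct on $m$ and decompose only $\mathcal S^1$ into excursions from $\Sigma_i=\partial B(z,r_i)$ to $\Sigma_{i-1}\cup\Sigma_{i+1}$, with $r_i=2^i\|z\|$ and $i\geq-I$. This gives three facts. Every excursion at level $i$ stays at distance $\asymp r_i$ from $z$. The expected number of excursions per level is $O(1)$ by transience. A level $i<0$ is reached with probability $\lesssim(r_i/\|z\|)^{d-2}$.

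Role reversal is used only for the innermost ball $B(z,r_{-I})$. There the target $(z+\mathcal S^2+\cdots+\mathcal S^m+\mathcal T_c)\cap B(z,2r_{-I})-A$ is finite and localized. Combining \eqref{eq-introduction-rw-hitting}, \eqref{capofsum} and the $m$-fold convolution of Green's functions gives the bound $\operatorname{Cap}_{d-2}(A)\,r_{-I}^{2m}/\|z\|^{d-2}$. Then \eqref{comparisonof2cap} turns $\operatorname{Cap}_{d-2}(A)$ into $\operatorname{diam}(A)^{2m+2}\operatorname{Cap}_{d-2m-4}(A)$.

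For $m=1$, Zhu's estimate applies to each excursion directly, after covering it by boundedly many sets of diameter at most $r_i/16$. For $m\geq2$, an excursion has diameter comparable to its distance from $z$. The $(m-1)$-case needs $\operatorname{dist}\geq C\operatorname{diam}^{a_{m-1}}$, so it cannot be applied to the whole excursion. Instead:
\begin{itemize}
\item Chop each excursion into sub-pieces of radius $\ell_i\asymp r_i^{1/a_{m-1}}$.
\item Bound their expected number by $1+r_i^2/\ell_i^2$, using the second moment of the piece count and the exponential tail of the exit time from the annulus.
\item Apply the $(m-1)$-case to $\mathcal S^2,\dots,\mathcal S^m,\mathcal T_c$ hitting each target $y+\widetilde{\mathcal S}(B_{2\ell_i})+A$.
\item Use \eqref{eq-introduction-stopped-rw-capacity} to bound each such hitting probability by $\ell_i^2\operatorname{Cap}_{d-2m-4}(A)/r_i^{d-2m}$.
\end{itemize}
The cancellation $(r_i^2/\ell_i^2)\cdot\ell_i^2=r_i^2$ makes the levels sum to $\operatorname{Cap}_{d-2m-4}(A)/\|z\|^{d-2m-2}$. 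Balancing the inner term against $r_{-I}\asymp\operatorname{diam}(A)^{a_{m-1}}$ is exactly what produces $a_m=a_{m-1}+(m+1)/m$.
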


Proposition~\ref{prop-mixed-hitting-probability}  then gives the upper bound in
\eqref{eq-introduction-stopped-brw-capacity}. For the lower bound, the argument proceeds in a different direction. We
compare the stopped branching random walk with the full branching
random walk by Lemma~\ref{lemma-stopped-to-full-brw}. This allows us to apply Zhu's estimate, while the resulting capacity of the
localized random walk ranges is controlled by the lower bound in
Proposition~\ref{prop-capacity-several-stopped-rw}. This completes the
proof of part~\textup{(ii)} of
Theorem~\ref{theorem-main-capacity-introduction}.

\subsection*{Idea of the proof of Theorem~\ref{proposition-main-hitting-introduction}}

Let us first explain the idea in the pure branching case.
The proof is by induction, and the first step is to consider a sum of two critical branching random walks. Here the
annular decomposition used for simple random walks can no longer be
formulated in terms of successive entrance and exit times, since a
branching random walk may hit the same boundary through
many branches. Even though the previous strategy cannot be applied, Zhu's hitting time result \eqref{eq-introduction-brw-hitting} forces us to localize the ranges. We replace stopping times by
\textit{stopping lines} and random walk excursions by \textit{branching waves}.

At a fixed scale, the first stopping line records the pioneer particles entering the annulus, and their descendants are followed
until the corresponding exit line. Iterating this construction produces
successive waves. The key point is that the expected size of the successive entrance lines decreases geometrically. By
criticality, the expected size of the first entrance line is controlled
by the probability that an ordinary random walk reaches the same
boundary. Thus, the total
contribution of all waves is controlled in the same way as the number of
annular excursions in the random walk case.

The capacity contribution of each wave is then estimated by the stopped
branching random walk estimates established in
Theorem~\ref{theorem-main-capacity-introduction}. Combining the number
of entrance particles, the capacity of the corresponding stopped
branching pieces, and the hitting cost of the second branching random
walk yields the case of two branching random walks.

As the number of branching random walks increases, the separation between the starting point and the target set needs to be larger and larger. We therefore further decompose each wave into local branching blocks, with scales chosen so that this separation condition is fulfilled for every block. Conditionally on the corresponding stopping lines, the descendant blocks are independent branching random walks, and the induction hypothesis can be applied to each of them. 
This yields the following intermediate result in the pure branching case.

\begin{proposition}
\label{prop-hitting-several-brw}
Let $p\geq2$ and $d\geq4p+1$. Let
$\mathcal T_c^1,\ldots,\mathcal T_c^p$ be independent critical
branching random walk ranges. If $\|z\|
    \geq
    C_p\operatorname{diam}(A)^{\beta_p}$,
where $\beta_p$ is defined in
Equation~\eqref{eq-chi-beta-several-brw}, then
\begin{equation*}
\mathbb P\left(
        z+\mathcal T_c^1+\cdots+\mathcal T_c^p
    \conn A
\right)
\asymp
\frac{
    \operatorname{Cap}_{d-4p}(A)
}{
    \|z\|^{d-2p}
}.
\end{equation*}
The lower bound remains valid under a linear starting point condition.
\end{proposition}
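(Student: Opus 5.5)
We argue by induction on $p$, following the branching-wave strategy sketched above. Write $\mathcal T'=\mathcal T_c^2+\cdots+\mathcal T_c^p$. The case $p=2$ and the inductive step are treated the same way: the last walk $\mathcal T_c^1$ is decomposed into waves, and the hitting of $A$ by $z+\mathcal T_c^1+\mathcal T'$ is read as the hitting of a random target by the remaining sum. For $p-1=1$ we use Zhu's estimate \eqref{eq-introduction-brw-hitting} together with \eqref{bcap-equiv-(d-4)cap}. For $p-1\ge 2$ we use the induction hypothesis with $\operatorname{Cap}_{d-4(p-1)}$.

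\textbf{Lower bound.} We restrict to the event that $\mathcal T_c^1$ stays in a ball $B_R$, with $R\asymp \|z\|$ chosen so that a linear separation condition holds. The key comparison is
\[
\mathbb P\bigl(z+\mathcal T_c^1+\mathcal T'\conn A\bigr)\ge \mathbb P\bigl(z+\mathcal T'\conn A-\mathcal T_c^1(B_R)\bigr),
\]
where $\mathcal T_c^1(B_R)$ is symmetrized by symmetry of $\theta$. Conditioning on $\mathcal T_c^1(B_R)$, we apply the induction hypothesis (lower bound, linear condition) to the target $A-\mathcal T_c^1(B_R)$, which has diameter $\lesssim R$. This gives
\[
\mathbb P\bigl(z+\mathcal T_c^1+\mathcal T'\conn A\bigr)\gtrsim \frac{\mathbb E\bigl[\operatorname{Cap}_{d-4(p-1)}\bigl(\mathcal T_c^1(B_R)+A\bigr)\bigr]}{\|z\|^{d-2(p-1)}}.
\]
We then need the lower bound $\mathbb E[\operatorname{Cap}_{d-4(p-1)}(\mathcal T_c(B_R)+A)]\gtrsim R^2\operatorname{Cap}_{d-4p}(A)$. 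This is the $d-2(N-2)$ statement of Theorem~\ref{theorem-main-capacity-introduction}(ii) with $N=2p$, whose lower bound needs only $R\ge cN\operatorname{diam}(A)$. Since $R\asymp\|z\|$, the right-hand side becomes $\operatorname{Cap}_{d-4p}(A)/\|z\|^{d-2p}$. One subtlety must be handled: the induction hypothesis requires the starting point to be far from a target whose diameter is of order $R$. We therefore take $R=\varepsilon\|z\|$, so that the lower bound holds under a linear condition.

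\textbf{Upper bound.} Fix dyadic scales $r_j=2^j r_0$ around $z$, with $r_0$ a power of $\operatorname{diam}(A)$. For each annulus $\{r_j\le\|x-z\|\le r_{j+1}\}$, in the frame where $A$ is the target, we define successive entrance and exit stopping lines of $\mathcal T_c^1$. These produce waves: conditioned on the entrance line, the waves are independent stopped branching random walks $\mathcal T_c(B_{r_j})$ started at the pioneer particles. A union bound over all pioneer particles $u$ gives
\[
\mathbb P\bigl(z+\mathcal T_c^1+\mathcal T'\conn A\bigr)\le\sum_j \mathbb E\bigl[\#\text{entrance particles at scale } j\bigr]\cdot\sup_{\|y\|\asymp r_j}\mathbb P\bigl(y+\mathcal T_c(B_{r_j})+\mathcal T'\conn A\bigr).
\]
The expected number of first-entrance particles is, by criticality (many-to-one), the probability that a simple random walk from $0$ reaches $B(-z,r_{j+1})$, up to a $z$-dependent factor. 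This is of order $(r_j/\|z\|)^{d-2}$ when $r_j\ll\|z\|$. Later waves decay geometrically, because each return again costs a random-walk hitting probability bounded away from one. The inner probability is bounded, via the induction hypothesis upper bound applied to the target $A-\mathcal T_c(B_{r_j})$, by
\[
\frac{\mathbb E\bigl[\operatorname{Cap}_{d-4(p-1)}\bigl(\mathcal T_c(B_{r_j})+A\bigr)\bigr]}{r_j^{d-2(p-1)}}\lesssim\frac{r_j^2\operatorname{Cap}_{d-4p}(A)}{r_j^{d-2(p-1)}},
\]
where the last step uses the upper bound of Theorem~\ref{theorem-main-capacity-introduction}(ii) with $N=2p$. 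This requires $r_j\ge C\operatorname{diam}(A)^{a_{2p-2}}$.

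The induction hypothesis also requires $r_j\ge C_{p-1}(\operatorname{diam}$ of the target$)^{\beta_{p-1}}$. The target has diameter about $r_j$, which is not compatible. So each wave must be further split into local blocks of scale $\rho_j=r_j^{1/\beta_{p-1}}$, using nested stopping lines. The number of blocks is controlled by many-to-one, and a capacity estimate for each block at scale $\rho_j$ is then summed. This nested decomposition is the step I expect to be the main obstacle. It forces the exponent recursion $\beta_p$ and a careful check that the block sums reproduce the factor $r_j^2$ up to constants.

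Finally we sum over $j$ and over the scales up to $\|z\|$, together with the contribution of the far region $r_j\gtrsim\|z\|$, which is handled similarly. The geometric series gives the bound $\operatorname{Cap}_{d-4p}(A)/\|z\|^{d-2p}$, provided $d\ge4p+1$ makes the exponents summable. The threshold $\|z\|\ge C_p\operatorname{diam}(A)^{\beta_p}$ ensures that the smallest scale $r_0$ satisfies all the separation conditions.
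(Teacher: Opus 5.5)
\textbf{Verdict.} Your lower bound is the paper's argument. Your upper-bound architecture is also the paper's: annular waves of $\mathcal T_c^1$ built from entrance/exit stopping lines, refinement of each wave into local blocks of radius $\rho_j\asymp r_j^{1/\beta_{p-1}}$, block counting by many-to-one and the stopped-piece estimate, and a per-block bound from the induction hypothesis combined with Theorem~\ref{theorem-main-capacity-introduction}(ii) for $N=2p$. The genuine gap is the \emph{inner region}, which your union bound never covers.

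\textbf{The missing term.} Your sum runs only over the annuli $r_j\le\|x-z\|\le r_{j+1}$ with $j\ge0$. So the part of $\mathcal T_c^1$ within distance $r_0$ of the center of your annuli is never accounted for.
\begin{itemize}
\item Your block scheme cannot handle it. By construction, $r_0$ is the smallest scale at which the separation conditions hold: the stopped-sausage bound needs $\rho_j\gtrsim\operatorname{diam}(A)^{a_{2p-2}}$, hence $r_j\gtrsim\operatorname{diam}(A)^{\chi_p\beta_{p-1}}$.
\item It also cannot be discarded cheaply. By Zhu's estimate, the probability that $\mathcal T_c^1$ reaches a ball of radius $r_0$ at distance $\|z\|$ is of order $r_0^{d-4}/\|z\|^{d-2}$. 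When $\operatorname{Cap}_{d-4p}(A)\asymp1$, this is not $\lesssim\operatorname{Cap}_{d-4p}(A)/\|z\|^{d-2p}$ under the threshold $\|z\|\ge C_p\operatorname{diam}(A)^{\beta_p}$; it would force a $d$-dependent exponent.
\end{itemize}

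\textbf{How the paper handles it.} The paper reverses roles.
\begin{itemize}
\item If a point of $\mathcal T_c^1$ near the center produces the hit, then $\mathcal T_c^1$ meets the random set $K:=(z+\mathcal T')\cap B(z,2r_0)-A$. This set has diameter $\lesssim r_0\ll\|z\|$.
\item Zhu's estimate \eqref{eq-introduction-brw-hitting} together with \eqref{bcap-equiv-(d-4)cap} then needs only a linear condition, and gives the bound $\mathbb E[\operatorname{Cap}_{d-4}(K)]/\|z\|^{d-2}$.
\item By \eqref{capofsum}, $\operatorname{Cap}_{d-4}(K)\le|(z+\mathcal T')\cap B(z,2r_0)|\operatorname{Cap}_{d-4}(A)$. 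The expected volume is $\lesssim r_0^{2(p-1)}$, via a $(p-1)$-fold convolution of Green's functions. By \eqref{comparisonof2cap}, $\operatorname{Cap}_{d-4}(A)\lesssim\operatorname{diam}(A)^{4(p-1)}\operatorname{Cap}_{d-4p}(A)$.
\item Combining these gives the right order exactly when $\|z\|\gtrsim\operatorname{diam}(A)^2r_0$.
\end{itemize}
This step produces the additive $2$ in $\beta_p=2+\chi_p\beta_{p-1}$. The nested block decomposition accounts only for the factor $\chi_p\beta_{p-1}$. So your closing claim, that the threshold merely makes $r_0$ admissible, misplaces where the exponent is spent.

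\textbf{A smaller point.} Your justification of geometric decay, that each return costs a probability bounded away from one, is wrong as stated. Particles of an exit line sitting on the inner sphere return to the middle sphere with probability essentially one. The decay comes from combining two facts. First, a fixed fraction $p_d$ of the expected exit mass leaves through the outer sphere. Second, from the outer sphere the return probability is at most some $c_d^*<1$. Together these give the rate $1-p_d(1-c_d^*)$.
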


Finally, starting from Proposition~\ref{prop-hitting-several-brw}, we add
simple random walk ranges one at a time. The same annular localization
argument yields the full mixed estimate in Theorem~\ref{proposition-main-hitting-introduction}.

\subsection*{Organization of the rest of the paper}

\begin{itemize}
\item Section~\ref{section-preliminaries} collects some properties of the $\gamma$-capacity and some probabilistic tools used throughout the paper.
\item Section~\ref{section-capacity-sausages} is devoted to quantitative
capacity estimates for the random walk sausage. 
In particular, we establish the capacity
bounds for stopped random walk ranges and prove part~\textup{(i)} of
Theorem~\ref{theorem-main-capacity-introduction} in Subsection~\ref{Ranges stopped on exiting a ball}.
The proof of Proposition~\ref{prop-capacity-several-stopped-rw}, which deals with sums of stopped random walk
ranges, is presented in Subsection \ref{Sums of stopped random walk ranges}.
\item Section~\ref{section6} focus on quantitative
capacity estimates for the critical
branching random walk sausage.
 In particular, we prove Proposition~\ref{prop-mixed-hitting-probability} about the hitting by the sum of several random walks and one branching in subsections \ref{subsubsection-one-rw-one-brw} and \ref{subsubsection-several-rw-one-brw}.
 We finally prove part~\textup{(ii)} of Theorem~\ref{theorem-main-capacity-introduction} about the capacity of stopped critical branching random walk range in Subsection~\ref{subsubsection-stopped-brw-sausage}.
\item Section~\ref{section8} introduces the annular-wave decomposition and
applies it to the sum of two critical branching random walk ranges, proving Proposition~\ref{prop-hitting-several-brw} in  is the case $p=2$.
\item Section~\ref{section-several-brw} extends the argument to several
critical branching random walks, thereby completing the proof of Proposition~\ref{prop-hitting-several-brw}.
It also establishes the general mixed
hitting estimate of
Theorem~\ref{proposition-main-hitting-introduction}.
\end{itemize}

\section{Preliminaries and auxiliary results}
\label{section-preliminaries}

We collect here the notation and elementary estimates used throughout
the paper. The results quoted from the literature are stated in the
introduction; the present section records the auxiliary facts that are
used repeatedly in the proofs.

\subsection{Properties of discrete capacities}

First of all, let us collect some properties of the $\gamma$-capacity, whose definition is given in~\eqref{eq-gamma-capacity-definition} above.
See Appendix~\ref{appendix-capacities} for the proofs.

\begin{proposition}
\label{prop-basic-capacity-properties}
Let $\gamma>0$, and let $A,B\subseteq\mathbb Z^d$ be finite.
\begin{enumerate}[label=\textup{(\roman*)}]
\item If $A\subseteq B$, then
\begin{equation}
\label{eq-capacity-monotonicity}
    \operatorname{Cap}_{\gamma}(A)
    \leq
    \operatorname{Cap}_{\gamma}(B).
\end{equation}
\item The capacity is subadditive:
\begin{equation}
\label{eq-capacity-subadditivity}
    \operatorname{Cap}_{\gamma}(A\cup B)
    \leq
    \operatorname{Cap}_{\gamma}(A)
    +
    \operatorname{Cap}_{\gamma}(B).
\end{equation}
\item For every $z\in\mathbb Z^d$,
\begin{equation}
\label{eq-capacity-invariances}
    \operatorname{Cap}_{\gamma}(z+A)
    =
    \operatorname{Cap}_{\gamma}(A)
    =
    \operatorname{Cap}_{\gamma}(-A).
\end{equation}
\item One has
\begin{equation}
\label{capofsum}
    \operatorname{Cap}_{\gamma}(A+B)
    \leq
    |B|\operatorname{Cap}_{\gamma}(A).
\end{equation}
\item If $0<\beta\leq\alpha$, then
\begin{equation}
\label{comparisonof2cap}
    \operatorname{Cap}_{\alpha}(A)
    \leq
    \left(
        1+\operatorname{diam}(A)
    \right)^{\alpha-\beta}
    \operatorname{Cap}_{\beta}(A).
\end{equation}
\end{enumerate}
\end{proposition}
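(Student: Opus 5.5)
All five properties follow from the variational definition~\eqref{eq-gamma-capacity-definition}. Write $\mathcal E(A):=\inf_\nu \mathcal E_{k_\gamma}(\nu)$ over probability measures on $A$, so that $\operatorname{Cap}_\gamma(A)=\mathcal E(A)^{-1}$. The infimum is attained because the set of probability measures on a finite set is compact and $\mathcal E_{k_\gamma}$ is continuous. The standing facts we use are that $k_\gamma$ is positive and depends only on $\|x-y\|$, and that $k_\gamma(x,x)=1$. We treat the items in order, with (ii) as the only real step.

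\textbf{(i) and (iii).} A probability measure on $A$ is also one on $B\supseteq A$, so $\mathcal E(B)\le\mathcal E(A)$, which gives~\eqref{eq-capacity-monotonicity}. The maps $x\mapsto z+x$ and $x\mapsto -x$ are bijections that preserve $\|x-y\|$. Pushing measures forward along them maps probability measures bijectively and leaves energies unchanged, which gives~\eqref{eq-capacity-invariances}.

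\textbf{(ii) Subadditivity.} Let $\nu$ be an optimal measure on $A\cup B$. Split it as $\nu=\nu_A+\nu_B$ with disjoint supports, where $\nu_A=\nu|_A$ and $\nu_B=\nu|_{B\setminus A}$. Put $a=\nu_A(\mathbb Z^d)$ and $b=1-a$. Since $k_\gamma\ge0$, dropping the cross terms gives
\[
\mathcal E(A\cup B)\ \ge\ \mathcal E_{k_\gamma}(\nu_A)+\mathcal E_{k_\gamma}(\nu_B)\ \ge\ a^2\mathcal E(A)+b^2\mathcal E(B),
\]
where the second inequality comes from normalizing $\nu_A/a$ and $\nu_B/b$; if $a$ or $b$ is $0$, the corresponding term is simply absent. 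Minimizing $a^2x+b^2y$ over $a+b=1$ gives the value $xy/(x+y)=(1/x+1/y)^{-1}$. Hence $\mathcal E(A\cup B)^{-1}\le \mathcal E(A)^{-1}+\mathcal E(B)^{-1}$, which is~\eqref{eq-capacity-subadditivity}. This one-variable optimization is the only nontrivial step.

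\textbf{(iv).} Write $A+B=\bigcup_{b\in B}(A+b)$. Applying (ii) repeatedly and then (iii) gives
\[
\operatorname{Cap}_\gamma(A+B)\le\sum_{b\in B}\operatorname{Cap}_\gamma(A+b)=|B|\operatorname{Cap}_\gamma(A),
\]
which is~\eqref{capofsum}.

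\textbf{(v).} For $x,y\in A$ we have $1+\|x-y\|\le 1+\operatorname{diam}(A)$. Therefore
\[
k_\beta(x,y)=k_\alpha(x,y)\,(1+\|x-y\|)^{\alpha-\beta}\le (1+\operatorname{diam}(A))^{\alpha-\beta}\,k_\alpha(x,y).
\]
Integrating against $\nu\otimes\nu$ and taking the infimum gives $\mathcal E_\beta(A)\le(1+\operatorname{diam}(A))^{\alpha-\beta}\mathcal E_\alpha(A)$. Inverting both sides yields~\eqref{comparisonof2cap}.
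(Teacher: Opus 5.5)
Your proof is correct and matches the paper's appendix proof for (i), (iii), (iv) and (v) essentially step for step: enlarging the class of measures, pushing measures forward under isometries, writing $A+B=\bigcup_{b\in B}(A+b)$, and comparing the kernels pointwise on $A$ before taking infima and reciprocals. The only difference is (ii), which the paper cites from Claim~2.1 of Asselah--Okada--Schapira--Sousi rather than proving; you instead give a valid self-contained argument (restrict an optimal measure to $A$ and $B\setminus A$, drop the nonnegative cross terms, and use $\min_{a+b=1}(a^2x+b^2y)=xy/(x+y)$), which also shows that subadditivity holds for any nonnegative kernel.
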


\subsection{Critical branching random walks and the many-to-one formula}
Given a tree $\mathcal{T}$, a stopping line $\eta$ is a set of vertices such that no two
distinct vertices of $\eta$ lie on the same ancestral line and, for
every vertex $u$, the event $\{u\in\eta\}$ is determined by the
genealogy and spatial labels along the ancestral line from the root to
$u$. We denote by $\mathcal F_\eta$ the $\sigma$-field generated by the
genealogy and spatial labels revealed up to and including $\eta$.

\begin{lemma}[Branching property at a stopping line]
\label{lemma-branching-property-stopping-line}
Conditionally on $\mathcal F_\eta$, the descendant branching random
walks rooted at the vertices of $\eta$ are independent critical
branching random walks started from their respective spatial positions.
\end{lemma}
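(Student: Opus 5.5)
The plan is to reduce the statement to the classical branching property of Galton--Watson trees at deterministic vertices, by decomposing according to the possible realisations of the stopping line and using its measurability property. We work with the Ulam--Harris labelling: the tree is encoded by the i.i.d.\ offspring numbers $(\xi_u)_u$ and i.i.d.\ increments $(X_u)_u$ attached to all potential words $u$, with $u\in T_c$ iff every proper ancestor of $u$ has enough children. For a vertex $u$ let $\mathcal G_u$ be the $\sigma$-field generated by $\xi_v$ for proper ancestors $v\prec u$ and by $X_v$ for $v\preceq u$ (the information along the ancestral line, including $u$'s position but not its offspring). By definition of a stopping line, $\{u\in\eta\}\in\mathcal G_u$. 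The descendant subtree of $u$ is built from $(\xi_{uw},X_{uwj})_{w,j}$, which is independent of $\mathcal G_u$ and of all variables indexed by words not having $u$ as a strict prefix (other than the line itself); shifted by $V_c(u)$, it is a critical branching random walk started at $V_c(u)$.

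The key step is a product formula. Fix a finite antichain $L=\{u_1,\dots,u_n\}$ of words, an event $F\in\mathcal F_\eta$, and bounded measurable functionals $\Phi_1,\dots,\Phi_n$ of branching random walks. We want
\begin{equation*}
\mathbb E\Big[\mathds 1_{\{\eta=L\}}\mathds 1_F\prod_{i}\Phi_i(\text{desc. BRW of }u_i)\Big]=\mathbb E\Big[\mathds 1_{\{\eta=L\}}\mathds 1_F\prod_i \mathbb E_{V_c(u_i)}[\Phi_i]\Big].
\end{equation*}
For this we need that on $\{\eta=L\}$, the event $F\cap\{\eta=L\}$ is measurable with respect to the variables indexed by words that are not strict descendants of some $u_i$. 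This is where the precise meaning of ``revealed up to and including $\eta$'' is used: $\mathcal F_\eta$ is generated by sets of the form $\{\eta=L\}\cap G$ with $G$ measurable w.r.t.\ variables outside the subtrees strictly below $L$ (variables $\xi$ at vertices of $L$ themselves are excluded). Since the subtrees below distinct $u_i$ use disjoint families of i.i.d.\ variables, independent of everything else, the formula follows by Fubini. Summing over the countably many antichains $L$ (and handling infinite $\eta$ by finite truncations, or noting $\eta$ is a.s.\ finite since $T_c$ is a.s.\ finite) gives the conditional independence and identifies the conditional laws.

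The main obstacle is the measurability of $\{\eta=L\}$ itself: the definition only gives $\{u\in\eta\}\in\mathcal G_u$, but $\{\eta=L\}$ also requires that no other vertex is in $\eta$, including potentially vertices \emph{below} some $u_i$. I would handle this by using the antichain property: for $v$ a strict descendant of $u_i\in\eta$, $v\notin\eta$ automatically, so $\{\eta=L\}=\bigcap_i\{u_i\in\eta\}\cap\bigcap_{v}\{v\notin\eta\}$ with $v$ ranging over vertices not below $L$; each of these depends only on variables along ancestral lines avoiding the strict subtrees below $L$, which closes the argument. Verifying this formally (standard in Chauvin's and Jagers' theory of stopping lines) is the only delicate point; the rest is the Markov/branching property at fixed vertices.
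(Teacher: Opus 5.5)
Your proposal is correct and follows the same route as the paper. The paper's proof simply asserts that the offspring numbers and increments strictly below distinct vertices of $\eta$ are mutually independent, independent of $\mathcal F_\eta$, and distributed as in the original construction. Your decomposition over deterministic antichains $\{\eta=L\}$ supplies the standard justification of that one-line claim: you check that $\{\eta=L\}$, and $F\cap\{\eta=L\}$ for $F\in\mathcal F_\eta$, depend only on variables outside the subtrees strictly below $L$.

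One point is worth keeping explicit. You require that neither $\{u\in\eta\}$ nor $\mathcal F_\eta$ involve the offspring numbers $\xi_u$ of vertices on the line. That is the reading of ``up to and including $\eta$'' under which the lemma actually holds.
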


\begin{proof}
The offspring variables and spatial increments attached to edges
strictly below distinct vertices of $\eta$ are mutually
independent and are independent of $\mathcal F_\eta$. Their laws are
the same as in the original construction, which proves the claim.
\end{proof}

\begin{lemma}[Many-to-one formula]
\label{lemma-many-to-one-generation}
Assume that the critical branching random walk is started from the
origin. Let $(S_n)_{n\geq0}$ be a random walk with the same jump
distribution, and denote its law by $\mathbf{P}$. Let $F$ be a nonnegative functional of a finite path.
Then, for every $n\geq0$,
\begin{equation}
\label{eq-many-to-one-generation}
\mathbb E\bigg[
    \sum_{|u|=n}
    F\left(
        V_c(u_0),\ldots,V_c(u_n)
    \right)
\bigg]
=
\mathbf E_0\left[
    F(S_0,\ldots,S_n)
\right],
\end{equation}
where $u_0=\varnothing,u_1,\ldots,u_n=u$ is the ancestral line from the root to $u$.
\end{lemma}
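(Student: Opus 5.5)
We prove the many-to-one formula (Lemma~\ref{lemma-many-to-one-generation}) by induction on the generation $n$. We condition on the first $n-1$ generations and use two facts: criticality of $\mu$ (mean one), and independence of the spatial increments from the genealogy. By monotone convergence it suffices to treat bounded $F$, or even indicator functions $F=\mathds 1_{\{(x_0,\dots,x_n)\}}$ of a fixed path, since both sides are linear in $F$ and agree on nonnegative sums. For an indicator, the right-hand side is $\prod_{i=1}^n\theta(x_i-x_{i-1})$ when $x_0=0$, and $0$ otherwise.

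The base case $n=0$ is trivial: the only vertex of generation $0$ is the root, with $V_c(\varnothing)=0=S_0$.

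For the inductive step, let $\mathcal G_{n-1}$ be the $\sigma$-field generated by the genealogy up to generation $n-1$ together with the spatial labels of those vertices. Each vertex $u$ with $|u|=n$ can be written uniquely as $u=vj$, with $|v|=n-1$ and $1\le j\le \nu_v$, where $\nu_v$ is the number of children of $v$. Then
\begin{equation*}
\mathbb E\bigg[\sum_{|u|=n}F(V_c(u_0),\ldots,V_c(u_n))\bigg]
=\mathbb E\bigg[\sum_{|v|=n-1}\mathbb E\Big[\sum_{j=1}^{\nu_v}F\big(V_c(v_0),\ldots,V_c(v),V_c(v)+X_{vj}\big)\,\Big|\,\mathcal G_{n-1}\Big]\bigg].
\end{equation*}
Conditionally on $\mathcal G_{n-1}$, the offspring count $\nu_v$ has law $\mu$ and is independent of the increments $X_{vj}$, which are i.i.d.\ with law $\theta$. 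By Wald's identity, or simply by independence, the inner conditional expectation equals
\begin{equation*}
m_\mu\,\sum_{y}\theta(y)\,F\big(V_c(v_0),\ldots,V_c(v),V_c(v)+y\big),
\end{equation*}
where $m_\mu=1$ is the mean of $\mu$. Set $G(x_0,\ldots,x_{n-1})=\sum_y\theta(y)F(x_0,\ldots,x_{n-1},x_{n-1}+y)$, which is a nonnegative functional of a path of length $n-1$. The induction hypothesis then gives
\begin{equation*}
\mathbf E_0\left[G(S_0,\ldots,S_{n-1})\right]=\mathbf E_0\left[F(S_0,\ldots,S_n)\right],
\end{equation*}
where the equality is the Markov property of $S$ at time $n-1$.

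The only point requiring care is the measure-theoretic justification when $F$ is unbounded and $\nu_v$ is random. All terms are nonnegative, so Tonelli's theorem justifies the exchange of sums and expectations, and the identity holds in $[0,\infty]$. Finally, criticality is used exactly once per generation: the factor $m_\mu=1$ multiplies the result at each step. For a general mean $m_\mu$ the same argument would produce the factor $m_\mu^n$ on the right-hand side.
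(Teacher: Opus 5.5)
Your proof is correct, but it is organized differently from the paper's. The paper does not induct on $n$. It conditions on the whole genealogical tree $\mathcal T_c$ at once and observes that, since the displacements are i.i.d.\ and independent of the tree, the labels along the ancestral line of any fixed generation-$n$ vertex form a $\theta$-random walk path. This gives $\mathbb E[\sum_{|u|=n}F(\cdots)\mid\mathcal T_c]=Z_n\,\mathbf E_0[F(S_0,\ldots,S_n)]$, and the paper concludes by taking expectations with $\mathbb E[Z_n]=1$. You instead peel off one generation at a time: you condition on $\mathcal G_{n-1}$, use the mean of $\mu$ and the independence of $\nu_v$ from the $X_{vj}$ at each step, and close with the Markov property of $S$.

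The paper's route is shorter and yields a slightly stronger identity that holds conditionally on the tree. However, it takes $\mathbb E[Z_n]=1$ as known and genuinely needs the genealogy to be independent of all displacements. Your route is self-contained: taking $F\equiv1$ recovers $\mathbb E[Z_n]=1$. It also makes the factor $m_\mu^n$ for non-critical $\mu$ explicit. It would still work in models where sibling displacements are correlated with each other or with $\nu_v$, as long as the family intensity $\mathbb E[\sum_{j\le\nu_v}\mathds 1_{\{X_{vj}=y\}}]$ equals $\theta(y)$.

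Your preliminary reduction to indicator functions is never used and can be dropped. The Tonelli justification you give at the end already covers arbitrary nonnegative $F$.
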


\begin{proof}
Conditionally on the genealogical tree, the spatial path along the
ancestral line of each vertex at generation $n$ has the same law as
$(S_0,\ldots,S_n)$. Consequently,
\begin{equation*}
\mathbb E\bigg[
    \sum_{|u|=n}
    F\left(
        V_c(u_0),\ldots,V_c(u_n)
    \right)
    \,\bigg|\,
    \mathcal T_c
\bigg]
=
Z_n\,
\mathbf E_0\left[
    F(S_0,\ldots,S_n)
\right].
\end{equation*}
Taking expectations and using criticality,
$\mathbb E[Z_n]=1$, this proves
\eqref{eq-many-to-one-generation}.
\end{proof}

\section{Capacity estimates for random walk sausages}
\label{section-capacity-sausages}

\subsection{Fixed-time ranges}
\label{sec-fixed-time}
We start by proving Equation \eqref{eq-introduction-fixed-rw-capacity} of Theorem~\ref{theorem-main-capacity-introduction}, \textit{i.e.}\ we consider the range $\mathcal S[0,n]$.

\begin{proof}
    We first prove the lower bound. By Equation~\eqref{eq-introduction-aoss-limit} and the
subadditive ergodic theorem,
\begin{equation*}
\begin{split}
\inf_{m\geq 1}
\frac{
    \mathbb{E}\left[
        \mathrm{Cap}_{d-2(N-1)}
        \left(
            \mathcal{S}[0,m]+A
        \right)
    \right]
}{m}
\asymp
\mathrm{Cap}_{d-2N}(A).
\end{split}
\end{equation*}
Therefore, for every $n\geq 1$,
\begin{equation*}
    \mathbb{E}\left[
        \mathrm{Cap}_{d-2(N-1)}
        \left(
            \mathcal{S}[0,n]+A
        \right)
    \right]
    \gtrsim
    n\,\mathrm{Cap}_{d-2N}(A).
\end{equation*}
This proves the lower bound in Equation~\eqref{eq-introduction-fixed-rw-capacity}.

For the upper bound, let us fix $R>0$. 
We decompose the range of the walk
into pieces stopped upon exiting successive balls of radius $R$. 
Define
$\tau_0:=0$ and, recursively, for every $i\geq 0$,
\begin{equation*}
    \tau_{i+1}
    :=
    \inf\left\{
        m>\tau_i:
        \left\|S_m-S_{\tau_i}\right\|>R
    \right\}.
\end{equation*}
We also define
\begin{equation*}
    \mathcal{N}_n
    :=
    \max\left\{
        i\geq 0:
        \tau_i\leq n
    \right\}.
\end{equation*}
Then we have
\begin{equation*}
    \mathcal{S}[0,n]
    =
    \bigcup_{i=0}^{\mathcal{N}_n}
    \mathcal{S}
    \left[
        \tau_i,
        \tau_{i+1}\wedge n
    \right]\subseteq
    \bigcup_{i=0}^{\mathcal{N}_n}
    \mathcal{S}
    \left[
        \tau_i,
        \tau_{i+1}
    \right].
\end{equation*}
Therefore, by the subadditivity of the
$(d-2(N-1))$-capacity (see Proposition~\ref{prop-basic-capacity-properties}-\textup{(ii)}),
\begin{equation*}
\begin{split}
\mathrm{Cap}_{d-2(N-1)}
\left(
    \mathcal{S}[0,n]+A
\right)
&\leq
\sum_{i=0}^{\mathcal{N}_n}
\mathrm{Cap}_{d-2(N-1)}
\left(
    \mathcal{S}
    \left[
        \tau_i,
        \tau_{i+1}
    \right]
    +A
\right).
\end{split}
\end{equation*}

For every $i\geq 0$, let
\begin{equation*}
    \mathcal{R}_i
    :=
    \left\{
        S_{\tau_i+k}-S_{\tau_i}:
        0\leq k\leq \tau_{i+1}-\tau_i
    \right\}.
\end{equation*}
Since $
    \mathcal{S}
    \left[
        \tau_i,
        \tau_{i+1}
    \right]
    =
    S_{\tau_i}+\mathcal{R}_i$,
the translation invariance of the capacity gives
\begin{equation*}
\begin{split}
\mathrm{Cap}_{d-2(N-1)}
\left(
    \mathcal{S}
    \left[
        \tau_i,
        \tau_{i+1}
    \right]
    +A
\right)
&=
\mathrm{Cap}_{d-2(N-1)}
\left(
    \mathcal{R}_i+A
\right).
\end{split}
\end{equation*}
Consequently,
\begin{equation*}
\begin{split}
\mathrm{Cap}_{d-2(N-1)}
\left(
    \mathcal{S}[0,n]+A
\right)
&\leq
\sum_{i\geq 0}
\mathds{1}_{\{\tau_i\leq n\}}
\mathrm{Cap}_{d-2(N-1)}
\left(
    \mathcal{R}_i+A
\right).
\end{split}
\end{equation*}
By the strong Markov property at time $\tau_i$, conditionally on
$\mathcal{F}_{\tau_i}$, the range $\mathcal{R}_i$ is independent of
$\mathcal{F}_{\tau_i}$ and has the same law as
$\mathcal{S}(B_R)$. Therefore,
\begin{equation*}
\begin{split}
\mathbb{E}\left[
    \mathds{1}_{\{\tau_i\leq n\}}
    \mathrm{Cap}_{d-2(N-1)}
    \left(
        \mathcal{R}_i+A
    \right)
\right]
&=
\mathbb{P}(\tau_i\leq n)
\mathbb{E}\left[
    \mathrm{Cap}_{d-2(N-1)}
    \left(
        \mathcal{S}(B_R)+A
    \right)
\right].
\end{split}
\end{equation*}
Summing over $i\geq 0$, we obtain
\begin{equation}\label{decomp-of-cap-of-S[0,n]}
\begin{split}
\mathbb{E}\left[
    \mathrm{Cap}_{d-2(N-1)}
    \left(
        \mathcal{S}[0,n]+A
    \right)
\right]
&\leq
\mathbb{E}\left[
    \mathcal{N}_n+1
\right]
\mathbb{E}\left[
    \mathrm{Cap}_{d-2(N-1)}
    \left(
        \mathcal{S}(B_R)+A
    \right)
\right].
\end{split}
\end{equation}

Now, let $R= \operatorname{diam}(A)$. Let $\mathcal{S}^1,\ldots,\mathcal{S}^{N-1}$ be
independent simple random walk ranges, independent of
$\mathcal{S}$. Choose $z_R\in\mathbb{Z}^d$ such that $\|z_R\|=10 R$, so that the distance to the set $\mathcal{S}(B_R)+A$ is larger all over than twice its diameter (recall we assume that
$0\in A$). Therefore, conditionally on $\mathcal{S}(B_R)$, we may apply AOSS's Equation~\eqref{eq-introduction-sum-rw-hitting} with $N-1$ random walks. Taking
expectations, we obtain
\begin{equation}
\label{caprwstopped}
\begin{split}
\mathbb{E}\left[
    \mathrm{Cap}_{d-2(N-1)}
    \left(
        \mathcal{S}(B_R)+A
    \right)
\right]
&\lesssim
\|z_R\|^{d-2(N-1)}
\mathbb{P}\left(
        z_R+\mathcal{S}^1+\cdots+\mathcal{S}^{N-1}\conn
        \mathcal{S}(B_R)+A
\right)
\\
&\leq
\|z_R\|^{d-2(N-1)}
\mathbb{P}\left(
        z_R+\mathcal{S}^1+\cdots+\mathcal{S}^{N-1}-\mathcal{S}
    \conn A
\right),
\end{split}
\end{equation}
since $\mathcal{S}(B_R) \subseteq \mathcal{S}$. Applying again AOSS's Equation~\eqref{eq-introduction-sum-rw-hitting} with $N$ independent random walks,
we get
\begin{equation}\label{cap-of-stopped-rw}
\begin{split}
\mathbb{E}\left[
    \mathrm{Cap}_{d-2(N-1)}
    \left(
        \mathcal{S}(B_R)+A
    \right)
\right]
&\lesssim
\|z_R\|^{d-2(N-1)}
\frac{
    \mathrm{Cap}_{d-2N}(A)
}{
    \|z_R\|^{d-2N}
}
\lesssim
R^2\,
\mathrm{Cap}_{d-2N}(A).
\end{split}
\end{equation}

We now estimate the expected number of successive excursions of spatial
diameter $R$ completed by the walk before time $n$. Recall that
$\mathcal{N}_n$ denotes the largest index $i$ such that $\tau_i\leq n$. The classical renewal theorem yields the asymptotic order
$n/R^2$ for each fixed $R$; see
\cite[Section~10.2, Theorem~3]{grimmett-stirzaker}. Our argument,
however, requires a bound that is uniform in both $n$ and $R$, which is
established in the following lemma. We refer to Appendix~\ref{appendix-rw} for its proof.

\begin{lemma}
\label{lemma-number-stopped-pieces}
There exists a constant $C>0$, depending only on the dimension, such
that, for every $R\geq 1$ and every $n\geq 1$,
\begin{equation}
\label{N_n}
    \mathbb{E}\left[
        \mathcal{N}_n+1
    \right]
    \leq
    C\Big(
        1+\frac{n}{R^2}
    \Big).
\end{equation}
In particular, if $n\geq R^2$, then 
$\mathbb{E}[\mathcal{N}_n+1]\lesssim n/ R^2$.
\end{lemma}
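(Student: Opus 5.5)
The plan is to treat $(\tau_i)$ as a renewal process and apply a Wald-type argument. By the strong Markov property, the increments $\xi_i:=\tau_i-\tau_{i-1}$ are i.i.d.\ with the law of the exit time $\tau_R:=\inf\{m>0:\|S_m\|>R\}$. Since $\mathcal N_n+1=\min\{i\geq1:\tau_i>n\}$ is a stopping time for the filtration $\mathcal F_{\tau_i}$ and is integrable (the increments are at least $1$, so $\mathcal N_n\leq n$), Wald's identity gives
\begin{equation*}
\mathbb E[\tau_{\mathcal N_n+1}]=\mathbb E[\mathcal N_n+1]\,\mathbb E[\tau_R].
\end{equation*}

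The first ingredient is a lower bound $\mathbb E[\tau_R]\geq cR^2$, uniform in $R\geq1$. The process $\|S_m\|^2-m$ is a martingale, so optional stopping at the bounded time $\tau_R\wedge M$ followed by monotone and dominated convergence yields $\mathbb E[\tau_R]=\mathbb E\|S_{\tau_R}\|^2$. Since $\|S_{\tau_R}\|>R$, this gives $\mathbb E[\tau_R]>R^2$. We need $\mathbb E[\tau_R]<\infty$ to justify the limit, which holds because the exit time of a finite ball has exponential tails.

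The second ingredient is an upper bound on the overshoot of the renewal process, namely $\mathbb E[\tau_{\mathcal N_n+1}]\leq n+\mathbb E[\xi_{\mathcal N_n+1}]$. This is the step that needs care, because the length of the interval straddling $n$ is size-biased (the inspection paradox). To avoid it, we do not bound this term directly. Instead we truncate: set $\xi_i':=\xi_i\wedge R^2$ and let $\tau'_i$ be the corresponding partial sums. Then $\tau_i'\leq\tau_i$, so $\mathcal N_n\leq\mathcal N_n'$ with the obvious definition of $\mathcal N_n'$. For the truncated walk the overshoot is at most $R^2$, so Wald's identity gives
\begin{equation*}
\mathbb E[\mathcal N'_n+1]\,\mathbb E[\tau_R\wedge R^2]\leq n+R^2.
\end{equation*}

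It remains to show $\mathbb E[\tau_R\wedge R^2]\geq cR^2$. It suffices to prove $\mathbb P(\tau_R>R^2/2)\geq c$, that is, that with positive probability the walk stays in $B_R$ for time $R^2/2$. By Doob's maximal inequality applied to the submartingale $\|S_m\|^2$,
\begin{equation*}
\mathbb P\Big(\max_{m\leq R^2/2}\|S_m\|>R\Big)\leq \frac{\mathbb E\|S_{R^2/2}\|^2}{R^2}=\tfrac12 .
\end{equation*}
Hence $\mathbb E[\tau_R\wedge R^2]\geq \tfrac{R^2}{2}\cdot\tfrac12=R^2/4$. This yields $\mathbb E[\mathcal N_n+1]\leq 4(n+R^2)/R^2=4(1+n/R^2)$, with a constant independent of the dimension, and the final claim for $n\geq R^2$ follows immediately.
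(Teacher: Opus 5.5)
Your proof is correct, but after the common first step it takes a different route from the paper. Both arguments use the same input: Doob's maximal inequality for the submartingale $\|S_m\|^2$ shows that the exit time $H_R$ exceeds a fixed multiple of $R^2$ with probability bounded below.

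The paper turns this into a Laplace-transform bound $\mathbb E[e^{-H_R/R^2}]\leq q<1$. It then applies Markov's inequality to $e^{-\tau_k/R^2}$, which gives the Chernoff-type tail $\mathbb P(\mathcal N_n\geq k)\leq e^{n/R^2}q^k$, and sums over $k$ with a cutoff of order $1+n/R^2$.

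You instead truncate the increments at $R^2$, so the overshoot past $n$ is at most $R^2$. You then apply Wald's identity to the stopping time $\mathcal N_n'+1\leq n+1$, and use $\mathcal N_n\leq\mathcal N_n'$. This sidesteps the inspection paradox correctly and gives the explicit, dimension-free constant $4$. Your first ingredient, $\mathbb E[\tau_R]>R^2$ via the martingale $\|S_m\|^2-m$, is never used and can be dropped.

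The paper's route buys an exponential tail for $\mathcal N_n$. It reuses that tail in Remark~\ref{remark-second-moment-stopped-pieces} to get $\mathbb E[(\mathcal N_n+1)^2]^{1/2}\lesssim 1+n/R^2$. This second-moment bound is needed later, via Cauchy--Schwarz, to count the small pieces inside an annular excursion and the local branching blocks.

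Your first-moment Wald argument does not give this directly, but it extends with one more step. Apply Wald's second identity to the truncated increments, whose variance is at most $R^4$:
\[
\mu^2\,\mathbb E[T^2]\leq 2(n+R^2)^2+2R^4\,\mathbb E[T],
\qquad \mu\geq R^2/4 .
\]
Here $T=\mathcal N_n'+1$. This yields the same second-moment bound.
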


Combining Equations \eqref{decomp-of-cap-of-S[0,n]} and \eqref{cap-of-stopped-rw}, with the above lemma, we obtain
\begin{equation*}
\begin{split}
\mathbb{E}\left[
    \mathrm{Cap}_{d-2(N-1)}
    \left(
        \mathcal{S}[0,n]+A
    \right)
\right]
&\lesssim
\left(
    1+\frac{n}{R^2}
\right)
R^2\,
\mathrm{Cap}_{d-2N}(A)
\lesssim
\left(R^2+n
\right)
\mathrm{Cap}_{d-2N}(A).
\end{split}
\end{equation*}

Using the assumption $n\geq 2\operatorname{diam}(A)^2$,
we have $R^2\lesssim n$.
Consequently,
\begin{equation*}
    \mathbb{E}\left[
        \mathrm{Cap}_{d-2(N-1)}
        \left(
            \mathcal{S}[0,n]+A
        \right)
    \right]
    \lesssim
    n\,\mathrm{Cap}_{d-2N}(A).
\end{equation*}
This concludes the proof of the upper bound in Equation~\eqref{eq-introduction-fixed-rw-capacity}.
\end{proof}

\subsection{Ranges stopped on exiting a ball}\label{Ranges stopped on exiting a ball}

We prove now Equation \eqref{eq-introduction-stopped-rw-capacity} of Theorem~\ref{theorem-main-capacity-introduction},
which considers the range $\mathcal S(B_R)$
\begin{proof}
Let $R\geq 2\operatorname{diam}(A)$. The decomposition in Subsection~\ref{sec-fixed-time} above also yields a lower bound for the expected
capacity of the stopped random walk sausage. 
More precisely, let
$R\geq 2\operatorname{diam}(A)$ and $n\geq R^2$. By the range
decomposition and Lemma~\ref{lemma-number-stopped-pieces},
\begin{equation*}
\begin{split}
\mathbb{E}\left[
    \mathrm{Cap}_{d-2(N-1)}
    \left(
        \mathcal{S}[0,n]+A
    \right)
\right]
&\leq
\mathbb{E}\left[
    \mathcal{N}_n+1
\right]
\mathbb{E}\left[
    \mathrm{Cap}_{d-2(N-1)}
    \left(
        \mathcal{S}(B_R)+A
    \right)
\right]
\\
&\lesssim
\frac{n}{R^2}
\mathbb{E}\left[
    \mathrm{Cap}_{d-2(N-1)}
    \left(
        \mathcal{S}(B_R)+A
    \right)
\right].
\end{split}
\end{equation*}
Consequently,
\begin{equation}
\label{forlowerboundcapofrw}
\begin{split}
\mathbb{E}\left[
    \mathrm{Cap}_{d-2(N-1)}
    \left(
        \mathcal{S}(B_R)+A
    \right)
\right]
&\gtrsim
R^2
\frac{
    \mathbb{E}\left[
        \mathrm{Cap}_{d-2(N-1)}
        \left(
            \mathcal{S}[0,n]+A
        \right)
    \right]
}{n}
\\
&\geq
R^2
\inf_{m\geq 1}
\frac{
    \mathbb{E}\left[
        \mathrm{Cap}_{d-2(N-1)}
        \left(
            \mathcal{S}[0,m]+A
        \right)
    \right]
}{m}
\gtrsim
R^2\,
\mathrm{Cap}_{d-2N}(A),
\end{split}
\end{equation}
where the last inequality follows from Equation~\eqref{eq-introduction-aoss-limit}. So we get the desired lower bound.

On the other hand, Equation~\eqref{cap-of-stopped-rw} directly gives
\begin{equation*}
\begin{split}
\mathbb{E}\left[
    \mathrm{Cap}_{d-2(N-1)}
    \left(
        \mathcal{S}(B_R)+A
    \right)
\right]
&\lesssim
R^2\,\mathrm{Cap}_{d-2N}(A).
\end{split}
\end{equation*}
Combining the two inequalities proves Equation~\eqref{eq-introduction-stopped-rw-capacity}.
\end{proof}
\subsection{Sums of stopped random walk ranges}\label{Sums of stopped random walk ranges}
We can now generalize the result above and prove Proposition~\ref{prop-capacity-several-stopped-rw}
which deals with the sum of several random walk ranges.

\begin{proof}
We first prove the upper bound \eqref{eq-upper-several-stopped-rw}. Set
\begin{equation*}
    K_R
    :=
    \mathcal{S}^1(B_R)+\cdots+
    \mathcal{S}^k(B_R)+A.
\end{equation*}
Let
$\widetilde{\mathcal{S}}^{k+1},\ldots,
\widetilde{\mathcal{S}}^{N}$
be $N-k$ independent simple random walk ranges, independent of
$K_R$. Choose $z_R\in\mathbb{Z}^d$ such that $\|z_R\|=c_k R$, where $c_k>0$ is sufficiently large. Since $\operatorname{diam}(K_R)\lesssim \|z_R\|$,
we may apply AOSS's equation~\eqref{eq-introduction-sum-rw-hitting} and obtain
\begin{equation*}
\begin{split}
\mathbb{E}\left[
    \mathrm{Cap}_{d-2(N-k)}
    \left(
        K_R
    \right)
\right]
&\lesssim
\|z_R\|^{d-2(N-k)}
\mathbb{P}\left(
        z_R+
        \widetilde{\mathcal{S}}^{k+1}
        +\cdots+
        \widetilde{\mathcal{S}}^{N}
    \conn K_R
\right)\\
& \leq \|z_R\|^{d-2(N-k)}
\mathbb{P}\left(
        z_R+
        \widetilde{\mathcal{S}}^{k+1}
        +\cdots+
        \widetilde{\mathcal{S}}^{N}-\mathcal{S}^1-\cdots-\mathcal{S}^k
   \conn A
\right).\\
\end{split}
\end{equation*}

Applying again AOSS's Equation~\eqref{eq-introduction-sum-rw-hitting} with $N$ random walks yields
\begin{equation*}
\begin{split}
\mathbb{E}\left[
    \mathrm{Cap}_{d-2(N-k)}
    \left(
        K_R
    \right)
\right]
&\lesssim
\|z_R\|^{d-2(N-k)}
\frac{
    \mathrm{Cap}_{d-2N}(A)
}{
    \|z_R\|^{d-2N}
}
\lesssim
R^{2k}\,
\mathrm{Cap}_{d-2N}(A).
\end{split}
\end{equation*}
This proves \eqref{eq-upper-several-stopped-rw}.
We now prove the lower bound \eqref{eq-lower-several-stopped-rw} by induction on $k$.
For $k=1$, the result is precisely the lower bound in
Equation~\eqref{eq-introduction-stopped-rw-capacity}.

Assume that the result holds for some $k\geq1$, simultaneously for all
admissible values of $N$, and consider $k+1\leq N-1$. Suppose that $R\geq
    2\cdot5^k\operatorname{diam}(A)$, and set $R_1:=\frac{R}{5^k}$.
In particular, $R_1\geq2\operatorname{diam}(A)$. By the monotonicity of capacity,
\begin{equation*}
\begin{split}
&\mathbb{E}\left[
    \mathrm{Cap}_{d-2(N-(k+1))}
    \left(
        \mathcal{S}^1(B_R)+\cdots+
        \mathcal{S}^{k+1}(B_R)+A
    \right)
\right]
\\
&\qquad\qquad\geq
\mathbb{E}\left[
    \mathrm{Cap}_{d-2(N-(k+1))}
    \left(
        \mathcal{S}^1(B_R)+\cdots+
        \mathcal{S}^{k}(B_R)
        +\mathcal{S}^{k+1}(B_{R_1})+A
    \right)
\right].
\end{split}
\end{equation*}
Set $ A_1
    :=
    \mathcal{S}^{k+1}(B_{R_1})+A$. Since $\operatorname{diam}(A_1)
    \leq
    2R_1+\operatorname{diam}(A)$,
our assumptions imply
\begin{equation*}
\begin{split}
2\cdot5^{k-1}\operatorname{diam}(A_1)
&\leq
2\cdot5^{k-1}
\left(
    2R_1+\operatorname{diam}(A)
\right)
\\
&=
\frac{4R}{5}
+
2\cdot5^{k-1}\operatorname{diam}(A)
\leq
R.
\end{split}
\end{equation*}
Conditionally on $A_1$, we may therefore apply the induction
hypothesis with \(N\) replaced by \(N-1\). This gives
\begin{equation*}
\begin{split}
\mathbb{E}\left[
    \mathrm{Cap}_{d-2(N-(k+1))}
    \left(
        \mathcal{S}^1(B_R)+\cdots+
        \mathcal{S}^{k}(B_R)+A_1
    \right)
    \,\middle|\,
    A_1
\right]
&\gtrsim
R^{2k}\,
\mathrm{Cap}_{d-2(N-1)}(A_1).
\end{split}
\end{equation*}
Taking expectations and applying the lower bound of AOSS's Equation~\eqref{eq-introduction-stopped-rw-capacity} at radius $R_1$, we obtain
\begin{equation*}
\begin{split}
&\mathbb{E}\left[
    \mathrm{Cap}_{d-2(N-(k+1))}
    \left(
        \mathcal{S}^1(B_R)+\cdots+
        \mathcal{S}^{k+1}(B_R)+A
    \right)
\right]
\\
&\qquad\qquad\gtrsim
R^{2k}
\mathbb{E}\left[
    \mathrm{Cap}_{d-2(N-1)}
    \left(
        \mathcal{S}^{k+1}(B_{R_1})+A
    \right)
\right]
\\
&\qquad\qquad\gtrsim
R^{2k}R_1^2\,
\mathrm{Cap}_{d-2N}(A)\gtrsim
R^{2(k+1)}
\mathrm{Cap}_{d-2N}(A).
\end{split}
\end{equation*}
This completes the induction and proves the lower bound \eqref{eq-lower-several-stopped-rw}.
\end{proof}

\section{Capacity estimates for branching random walk sausages}
\label{section6}

In this section, we prove part~\textup{(ii)} of
Theorem~\ref{theorem-main-capacity-introduction}. We first establish the
mixed hitting estimate stated in
Proposition~\ref{prop-mixed-hitting-probability}. We then combine this estimate with Proposition~\ref{prop-capacity-several-stopped-rw} and the additional Lemma~\ref{lemma-stopped-to-full-brw}, stated below, to derive the upper and lower bounds for the capacity of the stopped critical branching random walk sausage.

\subsection{Hitting by the sum of one random walk and one branching random walk}
\label{subsubsection-one-rw-one-brw}
We start by proving the case $m=1$ of Proposition~\ref{prop-mixed-hitting-probability}. Let us set $a_1=\alpha_{1,1}=3$. 

\begin{proof}
We first prove the lower bound.
For any $R \geq 1$, since $\mathcal{S}(B_R)\subseteq\mathcal{S}$, we have
\begin{equation*}
\begin{split}
\mathbb{P}\left(z+\mathcal{S}+\mathcal{T}_c
    \conn A
\right)
&\geq
\mathbb{P}\left(
        z+\mathcal{S}(B_R)+\mathcal{T}_c
    \conn A
\right)
\\
&=
\mathbb{P}\left(
        z+\mathcal{T}_c
    \conn A-\mathcal{S}(B_R)
\right).
\end{split}
\end{equation*}

We choose $R\asymp \|z\|$ such that $
    \|z\|
    \geq
    2\operatorname{diam}\left(A-\mathcal{S}(B_R)\right)$.
Therefore, conditionally on $\mathcal{S}(B_R)$, Zhu's estimate~\eqref{eq-introduction-brw-hitting}
together with ASS's Equation \eqref{bcap-equiv-(d-4)cap} give
\begin{equation*}
\begin{split}
\mathbb{P}\left(
        z+\mathcal{T}_c
    \conn A-\mathcal{S}(B_R)\right)
&\gtrsim
\frac{
    \mathbb E\left[\mathrm{Cap}_{d-4}\left(A-\mathcal{S}(B_R)\right)
\right]}{
    \|z\|^{d-2}}.
\end{split}
\end{equation*}
Applying the lower bound in Equation~\eqref{eq-introduction-stopped-rw-capacity} of Theorem \ref{theorem-main-capacity-introduction} with $N=3$, we
get
\begin{equation*}
\begin{split}
\mathbb{E}\left[
    \mathrm{Cap}_{d-4}\left(A-\mathcal{S}(B_R)\right)\right]
&\gtrsim
R^2\,\mathrm{Cap}_{d-6}(A).
\end{split}
\end{equation*}
Since $R\asymp\|z\|$, it follows that
\begin{equation*}
\begin{split}
\mathbb{P}\left(
        z+\mathcal{S}+\mathcal{T}_c
   \conn A
\right)
&\gtrsim
\frac{
    R^2\mathrm{Cap}_{d-6}(A)
}{
    \|z\|^{d-2}
}
\gtrsim
\frac{
    \mathrm{Cap}_{d-6}(A)
}{
    \|z\|^{d-4}
}.
\end{split}
\end{equation*}
This proves the lower bound.

We now prove the upper bound. Assume that $\|z\| \geq 2 \text{diam}(A)^3$. By reflection symmetry,
$-\mathcal{S}$ has the same law as $\mathcal{S}$. Therefore,
\begin{equation*}
\begin{split}
\mathbb{P}\left(
    z+\mathcal{S}+\mathcal{T}_c
    \conn A
\right)
&=
\mathbb{P}\left(
        z+\mathcal{T}_c\conn
        \mathcal{S}+A
\right).
\end{split}
\end{equation*}
We decompose the range
$\mathcal{S}$ into annular excursions. This decomposition is adapted from the
annular decomposition used in \cite{asselah2024branching}.
For every $i\in\mathbb{Z}$, define $r_i:=2^i\|z\|$,
and let $I$ be the maximal integer such that $r_{-I}\geq 4 \operatorname{diam}(A)$.
For every $i\geq-I$, set
\begin{equation*}
    \mathscr{C}_i
    :=
    B(z,r_{i+1})\setminus B(z,r_{i-1}) \qquad  \Sigma_i:=\partial B(z,r_i).
\end{equation*}
For every $k\geq0$, define the starting time of the $k$-th excursion at level~$i$ by
\begin{equation*}
    \tau_i^k
    :=
    \inf\left\{
        n>\sigma_i^{k-1}:
        S_n\in\Sigma_i
    \right\}
\end{equation*}
with by convention $\sigma_i^{-1}:=-1$ and 
\begin{equation*}
    \sigma_i^k
    :=
    \inf\left\{
        n\geq\tau_i^k:
        S_n\in\Sigma_{i-1}\cup \Sigma_{i+1}
    \right\} 
\end{equation*}
is the ending time of the $k$-th excursion.
As usual, we use the convention that the infimum of the empty set is
infinite. 
Define the set of all excursions at the $i$-th level
\begin{equation*}
    \mathcal{S}^{(i)}
    :=
    \bigcup_{k:\,\tau_i^k<\infty}
    \mathcal{S}[\tau_i^k,\sigma_i^k] .
\end{equation*}
We refer to Figure~\ref{fig-excursions} for an illustration.

\begin{figure}[tb]
    \centering
    \includegraphics[width=0.55\linewidth]{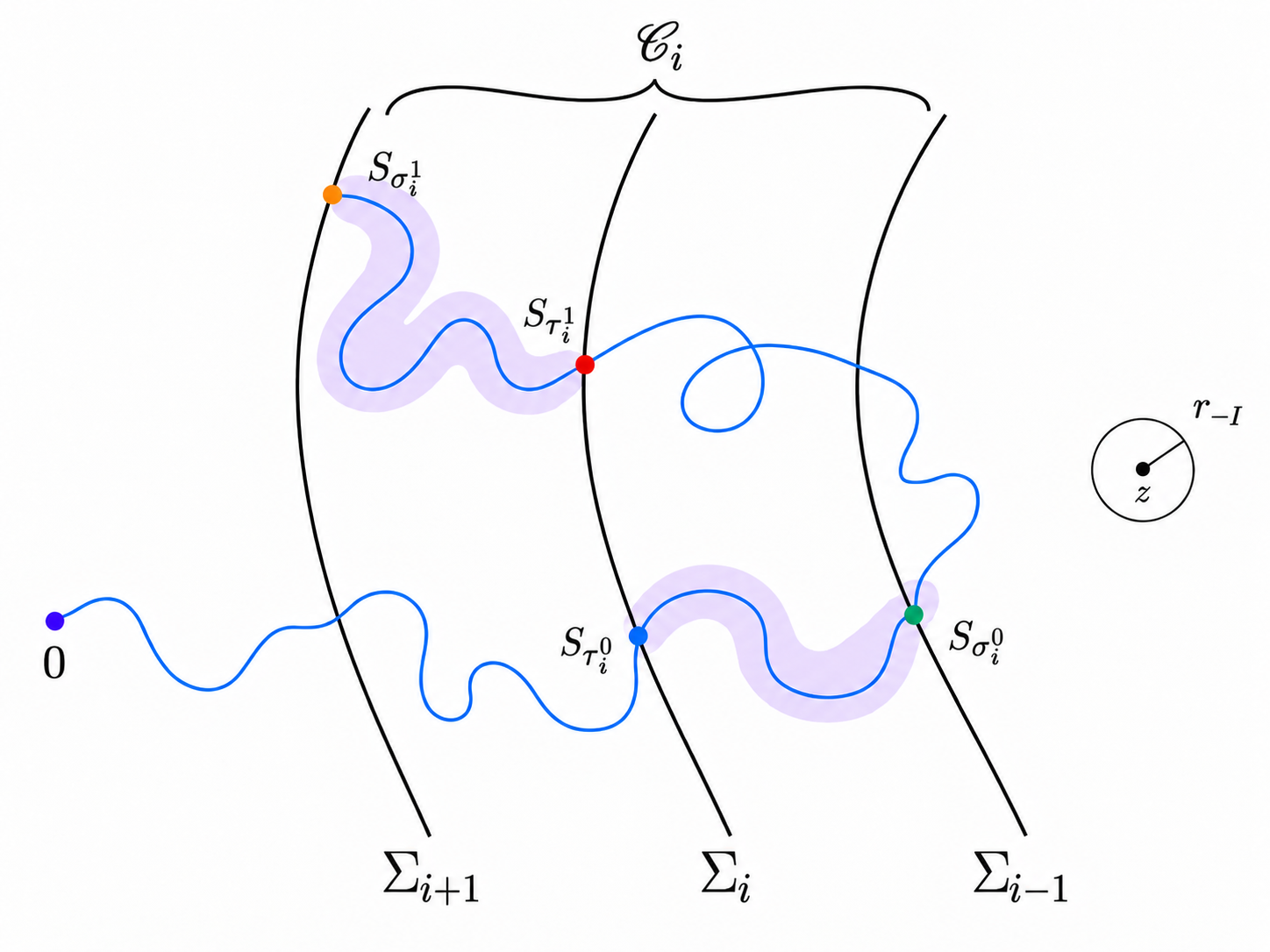}
    \caption{The first two excursions at the $i$-th level.}
    \label{fig-excursions}
\end{figure}

This annular decomposition gives
\begin{equation*}
    \mathcal{S}
    \subseteq
    \left(
        \mathcal{S}\cap B(z,r_{-I})
    \right)
    \cup
    \bigcup_{i\geq-I}\mathcal{S}^{(i)}.
\end{equation*}
Consequently, by a union bound,
\begin{equation}
\label{eq-decomposition-upper-mixed}
\begin{split}
\mathbb{P}\left(
        z+\mathcal{S}+\mathcal{T}_c
    \conn A
\right)
&\leq
\mathbb{P}\left(
        z+\mathcal{T}_c
   \conn
            \mathcal{S}\cap B(z,r_{-I})+A
\right)+
\sum_{i\geq-I}
\mathbb{P}\Big(
        z+\mathcal{T}_c\conn
        \mathcal{S}^{(i)}+A
\Big).
\end{split}
\end{equation}

We first estimate the first term in
\eqref{eq-decomposition-upper-mixed}. Since $0\in A$ and
$r_{-I}\geq4\text{diam}(A)$, we have
\begin{equation*}
\begin{split}
\mathbb{P}\left( z+\mathcal{T}_c
    \conn
            \mathcal{S}\cap B(z,r_{-I})+A
\right)
&\leq
\mathbb{P}\left(
    \mathcal{S}
    \conn
        \left(
            z+\mathcal{T}_c
        \right)
        \cap B(z,2r_{-I})
        -A
\right).
\end{split}
\end{equation*}
Since $\|z\| \geq2\operatorname{diam}(A)^3$, we can apply Equation~\eqref{eq-introduction-rw-hitting} and get
\begin{equation*}
\begin{split}
\mathbb{P}\left(
    \mathcal{S}\conn 
            \left(
            z+\mathcal{T}_c
        \right)
        \cap B(z,2r_{-I})
        -A
\right)& \lesssim\frac{\mathbb{E}\left[\mathrm{Cap}_{d-2}\left(\left(
            z+\mathcal{T}_c
        \right)
        \cap B(z,2r_{-I})
        -A\right)\right]}{\|z\|^{d-2}}\\
&
\lesssim
\frac{
    \mathrm{Cap}_{d-2}(A)
}{
    \|z\|^{d-2}
}
\mathbb{E}\left[
    \left|
        \left(
            z+\mathcal{T}_c
        \right)
        \cap B(z,2r_{-I})
    \right|
\right],
\end{split}
\end{equation*}
where in the last inequality, we use \eqref{capofsum} and \eqref{eq-capacity-invariances}. The expected number of distinct sites visited by the critical
branching random walk is bounded by its expected occupation measure.
Therefore,
\begin{equation*}
\begin{split}
\mathbb{E}\left[
    \left|
        \left(
            z+\mathcal{T}_c
        \right)
        \cap B(z,2r_{-I})
    \right|
\right]
&\leq
\sum_{x\in B(z,2r_{-I})}g(z,x).
\end{split}
\end{equation*}
Comparing the sum with an integral, we get
\begin{equation*}
    \sum_{x\in B(z,2r_{-I})}g(z,x)\lesssim\int_{0}^{2r_{-I}}r^{d-1}r^{2-d}\,dr\lesssim r_{-I}^2.
\end{equation*}
Using Equation~\eqref{comparisonof2cap}, $\mathrm{Cap}_{d-2}(A)
    \lesssim
   \operatorname{diam}(A)^4\,\mathrm{Cap}_{d-6}(A)$.
It follows that
\begin{equation*}
\begin{split}
\mathbb{P}\left(
        z+\mathcal{T}_c
    \conn \mathcal{S}\cap B(z,r_{-I})+A
\right)
&\lesssim
\frac{
    \operatorname{diam}(A)^6\mathrm{Cap}_{d-6}(A)
}{
    \|z\|^{d-2}
}.
\end{split}
\end{equation*}
Finally, since $\|z\|\geq2\operatorname{diam}(A)^3$,
\begin{equation}
\label{eq-inner-annular-bound}
\begin{split}
\mathbb{P}\left(
        z+\mathcal{T}_c
   \conn
            \mathcal{S}\cap B(z,r_{-I})+A
\right)
&\lesssim
\frac{
    \mathrm{Cap}_{d-6}(A)
}{
    \|z\|^{d-4}
}.
\end{split}
\end{equation}

We now estimate the second term in
\eqref{eq-decomposition-upper-mixed}. Since $r_i\geq r_{-I}\geq4\operatorname{diam}(A)$,
the set $\mathcal{S}^{(i)}+A$ is contained in
$B(z,\frac{9r_i}{4}) \setminus B(z,\frac{r_i}{4})$.
This annulus can be covered by a number, depending only on the
dimension, of sets of diameter at most $r_i/16$. Applying Zhu's Equation~\eqref{eq-introduction-brw-hitting} to each of these sets, together with ASS's Equation \eqref{bcap-equiv-(d-4)cap}, and using the monotonicity of capacity, we obtain
\begin{equation}
\label{eq-annular-hitting-capacity}
\begin{split}
\mathbb{P}\left(
        z+\mathcal{T}_c
   \conn
        \mathcal{S}^{(i)}+A
\right)
&\lesssim
\frac{
    \mathbb{E}\left[
        \mathrm{Cap}_{d-4}
        \left(
            \mathcal{S}^{(i)}+A
        \right)
    \right]
}{
    r_i^{d-2}
}.
\end{split}
\end{equation}
By the subadditivity of capacity and the strong Markov property,
\begin{equation*}
\begin{split}
\mathbb{E}\left[
    \mathrm{Cap}_{d-4}
    \left(
        \mathcal{S}^{(i)}+A
    \right)
\right]
&\lesssim
p_i
\sup_{x\in\Sigma_i}
    \sum_{k\geq0}
    \mathbb{P}_x\left(
        \tau_i^k<\infty
    \right)
\sup_{x\in\Sigma_i}
\mathbb{E}_x\left[
    \mathrm{Cap}_{d-4}
    \left(
        \mathcal{S}[0,\sigma_i^0]+A
    \right)
\right],
\end{split}
\end{equation*}
where $
    p_i:=\mathbb{P}\left(\tau_i^0<\infty\right)$.
By transience of the walk,
\begin{equation*}
    \sup_{i\geq-I}
    \sup_{x\in\Sigma_i}
    \sum_{k\geq0}
    \mathbb{P}_x\left(
        \tau_i^k<\infty
    \right)
    \lesssim1.
\end{equation*}
Furthermore, $p_i \lesssim (r_i/\|z\|)^{d-2}$ when $-I\leq i <0$ and $p_i \lesssim 1$ when $i\geq 0$.

For every $x\in\Sigma_i$, the excursion
$\mathcal{S}[0,\sigma_i^0]$ is contained in the range of a walk
started at $x$ and stopped upon exiting $B(x,3r_i)$. Thus,
\begin{equation*}
\begin{split}
\mathbb{E}_x\left[
    \mathrm{Cap}_{d-4}
    \left(
        \mathcal{S}[0,\sigma_i^0]+A
    \right)
\right]
&\leq
\mathbb{E}\left[
    \mathrm{Cap}_{d-4}
    \left(
        \mathcal{S}(B_{3r_i})+A
    \right)
\right].
\end{split}
\end{equation*}
Since $3r_i\geq2\operatorname{diam}(A)$, the upper bound in
Equation~\eqref{eq-introduction-stopped-rw-capacity} of Theorem \ref{theorem-main-capacity-introduction}, applied with $N=3$, yields
\begin{equation*}
\begin{split}
\mathbb{E}\left[
    \mathrm{Cap}_{d-4}
    \left(
        \mathcal{S}(B_{3r_i})+A
    \right)
\right]
\lesssim
r_i^2\,\mathrm{Cap}_{d-6}(A).
\end{split}
\end{equation*}
Consequently,
\begin{equation}
\label{eq-capacity-annular-piece}
\begin{split}
\mathbb{E}\left[
    \mathrm{Cap}_{d-4}
    \left(
        \mathcal{S}^{(i)}+A
    \right)
\right]
&\lesssim
\begin{cases}
    \displaystyle
    \|z\|^{2-d}r_i^d\,
    \mathrm{Cap}_{d-6}(A),
    &-I\leq i<0,\\[1.5ex]
    \displaystyle
    r_i^2\,
    \mathrm{Cap}_{d-6}(A),
    &i\geq0.
\end{cases}
\end{split}
\end{equation}
Combining \eqref{eq-annular-hitting-capacity} and
\eqref{eq-capacity-annular-piece}, we obtain
\begin{equation*}
\begin{split}
\sum_{i=-I}^{-1}
\mathbb{P}\left(
        z+\mathcal{T}_c
    \conn
        \mathcal{S}^{(i)}+A
\right)
&\lesssim
\mathrm{Cap}_{d-6}(A)
\|z\|^{2-d}
\sum_{i=-I}^{-1}r_i^2
\lesssim
\frac{
    \mathrm{Cap}_{d-6}(A)
}{
    \|z\|^{d-4}
}.
\end{split}
\end{equation*}
For the remaining scales, since $d\geq7$,
\begin{equation*}
\begin{split}
\sum_{i\geq0}
\mathbb{P}\left(
        z+\mathcal{T}_c\conn
        \mathcal{S}^{(i)}+A
\right)
&\lesssim
\mathrm{Cap}_{d-6}(A)
\sum_{i\geq0}r_i^{4-d}
\\
&=
\frac{
    \mathrm{Cap}_{d-6}(A)
}{
    \|z\|^{d-4}
}
\sum_{i\geq0}2^{-i(d-4)}
\lesssim
\frac{
    \mathrm{Cap}_{d-6}(A)
}{
    \|z\|^{d-4}
}.
\end{split}
\end{equation*}
Together with \eqref{eq-inner-annular-bound}, this gives
\begin{equation*}
    \mathbb{P}\left(
            z+\mathcal{S}+\mathcal{T}_c
        \conn A
    \right)
    \lesssim
    \frac{
        \mathrm{Cap}_{d-6}(A)
    }{
        \|z\|^{d-4}
    }.
\end{equation*}
This proves the upper bound of Proposition~\ref{prop-mixed-hitting-probability} for $m=1$.
\end{proof}

\subsection{Hitting by the sum of several random walks and one branching random walk}
\label{subsubsection-several-rw-one-brw}

We prove Proposition~\ref{prop-mixed-hitting-probability} for arbitrary
$m\geq1$, by using Proposition~\ref{prop-capacity-several-stopped-rw} for the lower bound, and by induction on the number of simple random walk ranges for the upper bound.

\begin{proof}

We first prove the lower bound. Suppose that $\|z\| \geq c_m \text{diam}(A)$. Fix a sufficiently small constant
$\varepsilon_m>0$ and set $
    \rho
    :=
    \left\lfloor
        \varepsilon_m\|z\|
    \right\rfloor$.
By choosing $\varepsilon_m$ sufficiently small, we may ensure that
\begin{equation*}
\begin{split}
\|z\|
&\geq
2\operatorname{diam}\left(
    A-\mathcal{S}^1(B_\rho)-\cdots-\mathcal{S}^m(B_\rho)
\right)
\end{split}
\end{equation*}
almost surely. Since for any $1\leq j\leq m$, $
    \mathcal{S}^j(B_\rho)
    \subseteq
    \mathcal{S}^j$,
we have
\begin{equation*}
\begin{split}
\mathbb{P}\left(
        z+\mathcal{S}^1+\cdots+\mathcal{S}^m+\mathcal{T}_c
    \conn A
\right)
&\geq
\mathbb{P}\left(
        z+\mathcal{T}_c
   \conn
        A-\mathcal{S}^1(B_\rho)-\cdots-
        \mathcal{S}^m(B_\rho)
\right).
\end{split}
\end{equation*}
Conditionally on the stopped random walk ranges,
Equations~\eqref{eq-introduction-brw-hitting} and \eqref{bcap-equiv-(d-4)cap} give
\begin{equation*}
\begin{split}
\mathbb{P}\left(
        z+\mathcal{T}_c
    \conn
        A-\mathcal{S}^1(B_\rho)-\cdots-
        \mathcal{S}^m(B_\rho)
\right)
&\gtrsim
\frac{
    \mathbb{E}\left[
        \mathrm{Cap}_{d-4}
        \left(
            A-\mathcal{S}^1(B_\rho)-\cdots-
            \mathcal{S}^m(B_\rho)
        \right)
    \right]
}{
    \|z\|^{d-2}
}.
\end{split}
\end{equation*}
By using symmetry and
Proposition~\ref{prop-capacity-several-stopped-rw}, we obtain
\begin{equation*}
\begin{split}
\mathbb{E}\left[
    \mathrm{Cap}_{d-4}
    \left(
        A-\mathcal{S}^1(B_\rho)-\cdots-
        \mathcal{S}^m(B_\rho)
    \right)
\right]
&\gtrsim
\rho^{2m}\,
\mathrm{Cap}_{d-2(m+2)}(A).
\end{split}
\end{equation*}
Since
$\rho\asymp\|z\|$, it follows that
\begin{equation*}
\begin{split}
\mathbb{P}\left(
        z+\mathcal{S}^1+\cdots+\mathcal{S}^m+\mathcal{T}_c
    \conn A
\right)
&\gtrsim
\frac{
    \|z\|^{2m}
    \mathrm{Cap}_{d-2(m+2)}(A)
}{
    \|z\|^{d-2}
}
=
\frac{
    \mathrm{Cap}_{d-2(m+2)}(A)
}{
    \|z\|^{d-2(m+1)}
}.
\end{split}
\end{equation*}

We prove the upper bound by induction on $m$. Suppose that $\|z\| \geq C_m \text{diam}(A)^{a_m}$, where $a_m$ 
is defined by $a_0=1$, $a_1=3$ and
\begin{equation}
\label{eq-a-m}
    a_m = m+1 + \sum_{i=1}^m \frac{1}{i} ,
\end{equation}
so $a_{m+1} = a_{m}+ (m+2)/(m+1)$.
(In particular its corresponds to $a_m=\alpha_{m,1}$ in Equation~\eqref{eq-alpha}.) The choice of \(a_m\) is explained by Equations~\eqref{condition-on-am-1} and \eqref{condition-on-am-2}.

Although the argument is
based on the same annular decomposition as in the case $m=1$, the
induction step does not follow directly from a decomposition into
annular excursions. An excursion at scale $r_i$ has diameter of order
$r_i$ and lies at distance of order $r_i$ from $z$, whereas the
induction hypothesis requires a nonlinear condition between
the starting point and the target set.

We therefore subdivide each annular excursion into local pieces whose
radius is of order $r_i^{1/a_m}$. This choice allows us to apply the
induction hypothesis to each local piece.

The case $m=1$ is the upper bound in the subsection above. Assume that the upper bound holds for some
$m\geq1$, and consider $m+1$ independent simple random walk ranges
$\mathcal S^1,\ldots,\mathcal S^{m+1}$, together with an independent
critical branching random walk range $\mathcal T_c$. Suppose that $d \geq 2m+7$ and $\|z\|\geq C_{m+1}
\operatorname{diam}(A)^{a_{m+1}}$.

By reflection symmetry,
\begin{equation*}
\begin{split}
\mathbb{P}\left(
        z+\mathcal{S}^1+\cdots+
        \mathcal{S}^{m+1}+\mathcal{T}_c
   \conn A
\right)
&=
\mathbb{P}\left(
        z+\mathcal{S}^2+\cdots+
        \mathcal{S}^{m+1}+\mathcal{T}_c
    \conn
        \mathcal{S}^1+A
\right).
\end{split}
\end{equation*}
We decompose $\mathcal{S}^1$ into annular pieces as in the case $m=1$. Let $r_i:=2^i\|z\|$,
and choose $I$ so that $
    r_{-I}
    \asymp
    \operatorname{diam}(A)^{a_m}$.
Denote the corresponding annular pieces of $\mathcal{S}^1$ by
$\mathcal{S}^{1,(i)}$. The union bound gives an inner contribution
and a sum of annular contributions:
\begin{equation*}
        \mathbb{P}\left(
        z+\mathcal{S}^2+\cdots+
        \mathcal{S}^{m+1}+\mathcal{T}_c
    \conn
        \mathcal{S}^1+A
\right)
\leq P_0+\sum_{i \geq -I}P_i
\end{equation*}
with 
\begin{equation*}
    \begin{split}
        P_0 & := \mathbb P\left(\mathcal{S}^1\conn\big(z+\mathcal{S}^2+\cdots+
            \mathcal{S}^{m+1}+\mathcal{T}_c\big)
        \cap B(z,2r_{-I})-A\right) \\
        P_i & :=\mathbb P\left(z+\mathcal{S}^2+\cdots+\mathcal{S}^{m+1}+\mathcal{T}_c\conn \mathcal{S}^{1,(i)}+A\right) .
    \end{split}
\end{equation*}

For the inner contribution, Equation~\eqref{eq-introduction-rw-hitting} and the
subadditivity of capacity imply
\begin{equation*}
\begin{split}
P_0
&\lesssim
\frac{
    \mathrm{Cap}_{d-2}(A)
}{
    \|z\|^{d-2}
}
\mathbb{E}\left[
    \left|
        \big(
            z+\mathcal{S}^2+\cdots+
            \mathcal{S}^{m+1}+\mathcal{T}_c
        \big)
        \cap B(z,2r_{-I})
    \right|
\right].
\end{split}
\end{equation*}
Let $G_{m+1}$ be the $(m+1)$-th convolution power of the Green's function $g$, then
\begin{equation*}
\begin{split}
\mathbb{E}\left[
    \left|
        \big(
            z+\mathcal{S}^2+\cdots+
            \mathcal{S}^{m+1}+\mathcal{T}_c
        \big)
        \cap B(z,2r_{-I})
    \right|
\right]
& \leq \sum_{x \in B(z,2r_{-I})} G_{m+1}(z,x).
\end{split}
\end{equation*}
Moreover, one can prove by induction that $G_{m+1}(z,x) \asymp \frac{1}{(1+\|z-x\|)^{d-2(m+1)}}$, therefore we have $\sum_{x \in B(z,2r_{-I})} G_{m+1}(z,x)\lesssim
r_{-I}^{2(m+1)}$.

Using \eqref{comparisonof2cap} and the last two equations, we get
\begin{equation*}
\begin{split}
P_0
&\lesssim
\frac{
    \operatorname{diam}(A)^{2(m+2)}
    r_{-I}^{2(m+1)}
    \mathrm{Cap}_{d-2(m+3)}(A)
}{
    \|z\|^{d-2}
}.
\end{split}
\end{equation*}
Since $
    r_{-I}\asymp \operatorname{diam}(A)^{a_m}$ and $ \|z\|\geq C_{m+1}\operatorname{diam}(A)^{a_{m+1}}$,
\begin{equation}\label{condition-on-am-1}
    \operatorname{diam}(A)^{2(m+2)}
    r_{-I}^{2(m+1)}
    \lesssim \operatorname{diam}(A)^{2(m+2+a_m(m+1))} \lesssim \|z\|^{\frac{2(m+2+a_m(m+1))}{a_{m+1}}}
    .
\end{equation}
And under the hypothesis that $a_{m+1}=a_m+(m+2)/(m+1)$,
\begin{equation}\label{condition-on-am-2}
    \frac{\|z\|^{\frac{2(m+2+a_m(m+1))}{a_{m+1}}}}{\|z\|^{d-2}}= \frac{1}{\|z\|^{d-2(m+2)}}. 
\end{equation}
Therefore,
\begin{equation}
\label{eq-inner-general-mixed}
    P_0
    \lesssim
    \frac{
        \mathrm{Cap}_{d-2(m+3)}(A)
    }{
        \|z\|^{d-2(m+2)}
    }.
\end{equation}

We next consider the annular terms. For every $i\geq-I$, set
\begin{equation*}
    \ell_i
    :=
    c_0r_i^{1/a_{m}},
\end{equation*}
where $c_0>0$ is sufficiently small and $a_m$ is the separation exponent appearing in the induction
hypothesis for $m$ simple random walks. We divide each annular excursion
of $\mathcal{S}^1$ into pieces stopped after travelling a distance
$\ell_i$, so that we can apply the induction hypothesis. The number of such pieces has expectation of order at most $
    1+\frac{r_i^2}{\ell_i^2}$.

For a random walk started at $x\in\Sigma_i$, define $
    \nu_0:=0$,
and for every $j\geq0$,
\begin{equation*}
    \nu_{j+1}
    :=
    \inf\left\{
        n>\nu_j:
        \big\|
            S_n^1-S_{\nu_j}^1
        \big\|>\ell_i
    \right\}.
\end{equation*}
Let
\begin{equation*}
    M_i
    :=
    \max\left\{
        j\geq0:
        \nu_j\leq\sigma_i^0
    \right\}.
\end{equation*}
Thus, $M_i+1$ is the number of pieces needed to cover
$\mathcal{S}_x^1[0,\sigma_i^0]$.

We claim that
\begin{equation}
\label{eq-number-small-annular-pieces-general}
    \sup_{x\in\Sigma_i}
    \mathbb E_x[M_i+1]
    \lesssim
    1+\frac{r_i^2}{\ell_i^2}.
\end{equation}
Indeed, for $n\geq0$, define
\begin{equation*}
    \mathcal N_n^{(\ell_i)}
    :=
    \max\left\{
        j\geq0:
        \nu_j\leq n
    \right\}.
\end{equation*}
On the event $\{qr_i^2\leq\sigma_i^0<(q+1)r_i^2\}$,
we have
\begin{equation*}
    M_i+1
    \leq
    \mathcal N_{(q+1)r_i^2}^{(\ell_i)}+1.
\end{equation*}
Consequently, by the Cauchy--Schwarz inequality,
\begin{equation*}
\begin{split}
\mathbb E_x[M_i+1]
&\leq
\sum_{q\geq0}
\mathbb E_x\Big[
    \big(
        \mathcal N_{(q+1)r_i^2}^{(\ell_i)}+1
    \big)
    \mathds{1}_{\{\sigma_i^0\geq qr_i^2\}}
\Big]
\\
&\leq
\sum_{q\geq0}
\mathbb E_x\Big[
    \big(
        \mathcal N_{(q+1)r_i^2}^{(\ell_i)}+1
    \big)^2
\Big]^{1/2}
\mathbb P_x\left(
    \sigma_i^0\geq qr_i^2
\right)^{1/2}.
\end{split}
\end{equation*}
By the second-moment estimate from Remark~\ref{remark-second-moment-stopped-pieces} (following the proof of Lemma~\ref{lemma-number-stopped-pieces}), we get
\begin{equation*}
\mathbb E_x\Big[
    \big(
        \mathcal N_n^{(\ell_i)}+1
    \big)^2
\Big]^{1/2}
\lesssim
1+\frac{n}{\ell_i^2}.
\end{equation*}
Moreover, the exit time from the annulus satisfies
\begin{equation*}
    \sup_{x\in\Sigma_i}
    \mathbb P_x\left(
        \sigma_i^0\geq qr_i^2
    \right)
    \lesssim
    e^{-cq}.
\end{equation*}
It follows that
\begin{equation*}
\begin{split}
\sup_{x\in\Sigma_i}
\mathbb E_x[M_i+1]
&\lesssim
\sum_{q\geq0}
\Big(
    1+
    \frac{(q+1)r_i^2}{\ell_i^2}
\Big)
e^{-cq/2} \lesssim
1+\frac{r_i^2}{\ell_i^2}.
\end{split}
\end{equation*}
This proves \eqref{eq-number-small-annular-pieces-general}.
For $P_i$, by the strong Markov property, a union bound, and the estimate above,
\begin{equation*}
\begin{split}
P_i
&\lesssim
p_i
\sup_{x\in\Sigma_i}
    \sum_{j\geq0}
    \mathbb P_x\left(
        \tau_i^j<\infty
    \right)
\Big(
    1+\frac{r_i^2}{\ell_i^2}
\Big)
\\
&\qquad\quad\times
\sup_{y\in\mathscr C_i}
\mathbb{P}\left(
        z+\mathcal{S}^2+\cdots+
        \mathcal{S}^{m+1}+\mathcal{T}_c
    \conn
        y+\widetilde{\mathcal S}(B_{2\ell_i})+A
\right),
\end{split}
\end{equation*}
where $\widetilde{\mathcal S}$ is an independent simple random walk
range and $
    p_i
    :=
    \mathbb P\left(
        \tau_i^0<\infty
    \right)$.
By transience of the walk,
\begin{equation*}
    \sup_{i\geq-I}
    \sup_{x\in\Sigma_i}
    \sum_{j\geq0}
\mathbb P_x(\tau_i^j<\infty)\lesssim1.
\end{equation*}
Then
\begin{equation}
\label{eq-general-annular-piece-reduction}
\begin{split}
P_i
&\lesssim
p_i
\Big(
    1+\frac{r_i^2}{\ell_i^2}
\Big)
\times
\sup_{y\in\mathscr C_i}
\mathbb{P}\left(
        z+\mathcal{S}^2+\cdots+
        \mathcal{S}^{m+1}+\mathcal{T}_c
    \conn
        y+\widetilde{\mathcal S}(B_{2\ell_i})+A
\right).
\end{split}
\end{equation}

For $y\in\mathscr C_i$, we have $
    \|z-y\|\asymp r_i$.
Moreover,
$
\operatorname{diam}\left(
    \widetilde{\mathcal S}(B_{2\ell_i})+A
\right)
\lesssim
\ell_i+\operatorname{diam}(A)$.
Choose $c_0$ sufficiently small and $r_{-I}$ large enough relative to
$\operatorname{diam}(A)^{a_m}$. Then $
    r_i
    \geq
    C_{m}
    \left(
        \ell_i+\operatorname{diam}(A)
    \right)^{a_m}$.
Therefore, the induction hypothesis gives
\begin{equation*}
\begin{split}
\sup_{y\in\mathscr C_i}
\mathbb{P}\left(
        z+\mathcal{S}^2+\cdots+
        \mathcal{S}^{m+1}+\mathcal{T}_c
    \conn
        y+\widetilde{\mathcal S}(B_{2\ell_i})+A
\right)
&\lesssim
\frac{
    \mathbb E\left[
        \mathrm{Cap}_{d-2(m+2)}
        \left(
            \widetilde{\mathcal S}(B_{2\ell_i})+A
        \right)
    \right]
}{
    r_i^{d-2(m+1)}
}.
\end{split}
\end{equation*}
Since $\ell_i\gtrsim\operatorname{diam}(A)$, the upper bound in
Equation~\eqref{eq-introduction-stopped-rw-capacity} gives
\begin{equation*}
\begin{split}
\mathbb E\left[
    \mathrm{Cap}_{d-2(m+2)}
    \left(
        \widetilde{\mathcal S}(B_{2\ell_i})+A
    \right)
\right]
&\lesssim
\ell_i^2\,
\mathrm{Cap}_{d-2(m+3)}(A).
\end{split}
\end{equation*}
Combining this estimate with
\eqref{eq-general-annular-piece-reduction}, we obtain
\begin{equation*}
\begin{split}
P_i
&\lesssim
p_i
\Big(
    1+\frac{r_i^2}{\ell_i^2}
\Big)
\frac{
    \ell_i^2\,
    \mathrm{Cap}_{d-2(m+3)}(A)
}{
    r_i^{d-2(m+1)}
}
\lesssim
p_i
r_i^{2m+4-d}\,
\mathrm{Cap}_{d-2(m+3)}(A).
\end{split}
\end{equation*}
Recall that $p_i \lesssim (r_i/\|z\|)^{d-2}$ when $-I\leq i <0$ and $p_i \lesssim 1$ when $i\geq 0$.
Hence,
\begin{equation*}
\begin{split}
\sum_{i=-I}^{-1}P_i
&\lesssim
\frac{
    \mathrm{Cap}_{d-2(m+3)}(A)
}{
    \|z\|^{d-2}
}
\sum_{i=-I}^{-1}r_i^{2m+2}
\lesssim
\frac{
    \mathrm{Cap}_{d-2(m+3)}(A)
}{
    \|z\|^{d-2m-4}
}.
\end{split}
\end{equation*}
For the remaining scales, since $d\geq2m+7$,
\begin{equation*}
\begin{split}
\sum_{i\geq0}P_i
&\lesssim
\mathrm{Cap}_{d-2(m+3)}(A)
\sum_{i\geq0}r_i^{2m+4-d}
\\
&=
\frac{
    \mathrm{Cap}_{d-2(m+3)}(A)
}{
    \|z\|^{d-2m-4}
}
\sum_{i\geq0}
2^{-i(d-2m-4)}
\lesssim
\frac{
    \mathrm{Cap}_{d-2(m+3)}(A)
}{
    \|z\|^{d-2m-4}
}.
\end{split}
\end{equation*}

Combining these estimates with
\eqref{eq-inner-general-mixed} proves the upper bound for $m+1$ and
completes the induction.
\end{proof}
\subsection{Capacity of a stopped branching random walk sausage}
\label{subsubsection-stopped-brw-sausage}

We now prove part~\textup{(ii)} of Theorem~\ref{theorem-main-capacity-introduction}, which gives the capacity estimate for the stopped critical branching random walk sausage. The upper bound follows from
Proposition~\ref{prop-mixed-hitting-probability}. For the lower bound, we compare the stopped critical branching random walk with the full
critical branching random walk.

\begin{lemma}
\label{lemma-stopped-to-full-brw}
There exists a constant $c_\star>1$ such that the following holds.
Let $K\subseteq\mathbb Z^d$ be finite and nonempty, and set $L:=\operatorname{dist}(0,K)$ and $\widetilde R:=\operatorname{dist}(\partial B_R,K)$.
Assume that
\begin{equation*}
    L\geq2\operatorname{diam}(K),
    \qquad
    \widetilde R\geq2\operatorname{diam}(K), \qquad \widetilde R\geq c_\star L.
\end{equation*}
Then we have
\begin{equation*}
    \mathbb P\left(
        \mathcal T_c(B_R)\conn K
    \right)
    \geq
    \frac12
    \mathbb P\left(
        \mathcal T_c\conn K
    \right).
\end{equation*}
\end{lemma}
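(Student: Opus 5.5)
The plan is to write the event $\{\mathcal T_c\conn K\}$ as the union of the stopped-walk event and the event that some particle first leaves $B_R$ and only afterwards reaches $K$:
\begin{equation*}
    \mathbb P\left(\mathcal T_c\conn K\right)
    \leq
    \mathbb P\left(\mathcal T_c(B_R)\conn K\right)
    +
    \mathbb P\left(\text{some descendant of a particle of }\eta_R\text{ visits }K\right),
\end{equation*}
where $\eta_R$ is the stopping line of particles whose position is the first exit of their ancestral line from $B_R$. Indeed, if a vertex $u$ of $\mathcal T_c$ lands in $K$ and $u\notin\mathcal T_c(B_R)$, then some ancestor $v\preceq u$ belongs to $\eta_R$. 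It therefore suffices to show that the second term is at most $\frac12\mathbb P(\mathcal T_c\conn K)$.

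To bound the second term, I condition on $\mathcal F_{\eta_R}$ and use Lemma~\ref{lemma-branching-property-stopping-line}: the descendants of each $v\in\eta_R$ form an independent critical BRW started at $V_c(v)$, which lies outside $B_R$ (at distance at most one from $\partial B_R$). A union bound gives at most $\mathbb E|\eta_R|\cdot\sup_{x}\mathbb P(x+\mathcal T_c\conn K)$, with the supremum over such exit points $x$. By the many-to-one formula (Lemma~\ref{lemma-many-to-one-generation}), summed over generations and applied to the functional ``the path exits $B_R$ for the first time at step $n$'', we get $\mathbb E|\eta_R|=\mathbf P_0(\text{walk exits }B_R)\leq 1$. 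Each such $x$ satisfies $\operatorname{dist}(x,K)\geq\widetilde R-1\gtrsim\widetilde R\geq2\operatorname{diam}(K)$, so Zhu's estimate~\eqref{eq-introduction-brw-hitting} applies (after translating so that a point of $K$ is the origin) and yields $\mathbb P(x+\mathcal T_c\conn K)\leq C\operatorname{BCap}(K)/\widetilde R^{d-2}$.

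For the comparison, Zhu's lower bound with $L\geq2\operatorname{diam}(K)$ gives $\mathbb P(\mathcal T_c\conn K)\geq c\operatorname{BCap}(K)/L^{d-2}$. Here the distance from $0$ to points of $K$ is between $L$ and $L+\operatorname{diam}(K)\leq\frac32L$, which only changes constants. Hence the second term is at most $(C/c)(L/\widetilde R)^{d-2}\,\mathbb P(\mathcal T_c\conn K)\leq (C/c)\,c_\star^{-(d-2)}\,\mathbb P(\mathcal T_c\conn K)$. Choosing $c_\star$ so large that this factor is at most $\frac12$ gives the claim.

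The main obstacle is the bookkeeping around the stopping line. One must check that the event ``$K$ is hit after exiting'' is genuinely covered by the descendants of $\eta_R$, and that $\mathbb E|\eta_R|\leq1$ holds even though $\eta_R$ may be infinite in principle. Since the union bound only needs expectations, this reduces to summing the many-to-one identity over $n$, together with a monotone convergence argument. A second point to verify is that Zhu's constants are uniform in the starting point once the separation condition holds; this is how \eqref{eq-introduction-brw-hitting} is stated. Note also that the hypothesis $\widetilde R\geq2\operatorname{diam}(K)$ is exactly what makes Zhu's estimate applicable from the exit points $x$.
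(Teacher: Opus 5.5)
Your proposal is correct and follows essentially the same route as the paper. You split along the first-exit stopping line $\eta_R$ and bound the post-exit term by $\mathbb E|\eta_R|\sup_x\mathbb P(x+\mathcal T_c\conn K)$ using the branching property and a union bound, get $\mathbb E|\eta_R|\leq 1$ from many-to-one, and compare Zhu's upper bound at distance $\widetilde R$ with his lower bound at distance $L$ before taking $c_\star$ large. Your extra bookkeeping (exit points within distance one of $\partial B_R$, translating a point of $K$ to the origin) is harmless and, if anything, slightly tidier than the paper's version.
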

\begin{figure}[tb]
    \centering
    \includegraphics[width=0.4\linewidth]{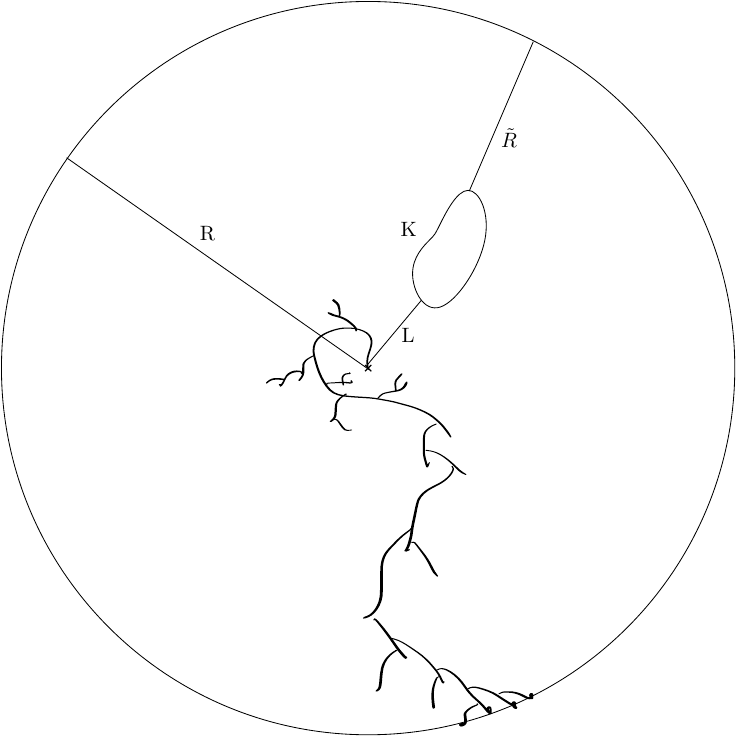}
    \caption{Illustration of the set $K$ in the ball.}
    \label{fig-stopped-to-full-brw}
\end{figure}
\begin{proof}
Let $\eta_R$ be the stopping line consisting of the particles that are
the first along their respective ancestral lines to reach
$\partial B_R$. More precisely,
\begin{equation*}
\begin{split}
\eta_R
:=
\bigl\{
    u:\,
    V_c(u)\in\partial B_R
    \text{ and }
    V_c(v)\notin\partial B_R
    \text{ for every }v<u
\bigr\}.
\end{split}
\end{equation*}

By the branching property and a union bound,
\begin{equation}
\label{eq-full-brw-decomposition}
\begin{split}
\mathbb P\left(
    \mathcal T_c\conn K
\right)
&\leq
\mathbb P\left(
    \mathcal T_c(B_R)\conn K
\right)+
\mathbb E\left[
    |\eta_R|
\right]
\sup_{x\in\partial B_R}
\mathbb P\left(
    \mathcal T_c^x\conn K \right).
\end{split}
\end{equation}

The many-to-one formula gives
\begin{equation*}
\begin{split}
\mathbb E\left[
    |\eta_R|
\right]
&=
\sum_{n\geq0}
\mathbb E\bigg[
    \sum_{|u|=n}
    \mathds{1}_{\{u\in\eta_R\}}
\bigg]=
\sum_{n\geq0}
\mathbf P_0\left(
    H_R=n
\right)=1,
\end{split}
\end{equation*}
where $H_R$ is the first time at which a simple random walk reaches
$\partial B_R$.

By Zhu's Equation~\eqref{eq-introduction-brw-hitting}, there exist constants
$c_1,c_2>0$ such that
\begin{equation*}
\mathbb P\left(
    \mathcal T_c\conn K
\right)
\geq
c_1
\frac{
    \operatorname{BCap}(K)
}{
    L^{d-2}
}
\end{equation*}
and
\begin{equation*}
\sup_{x\in\partial B_R}
\mathbb P\left(
    \mathcal T_c^x\conn K
\right)
\leq
c_2
\frac{
    \operatorname{BCap}(K)
}{
    \widetilde R^{d-2}
}.
\end{equation*}
Consequently,
\begin{equation*}
\begin{split}
\sup_{x\in\partial B_R}
\mathbb P\left(
    \mathcal T_c^x\conn K
\right)
&\leq
\frac{c_2}{c_1}
\left(
    \frac{L}{\widetilde R}
\right)^{d-2}
\mathbb P\left(
    \mathcal T_c\conn K
\right).
\end{split}
\end{equation*}
Choose $c_\star>1$ sufficiently large so that
$
    \frac{c_2}{c_1}
    c_\star^{-(d-2)}
    \leq
    \frac12$.
Equation~\eqref{eq-full-brw-decomposition} then gives
\begin{equation*}
\mathbb P\left(
    \mathcal T_c(B_R)\conn K
\right)
\geq
\frac12
\mathbb P\left(
    \mathcal T_c\conn K
\right).
\end{equation*}
\end{proof}

\begin{proof}
Proof of Equation~\eqref{eq-introduction-stopped-brw-capacity} in Theorem \ref{theorem-main-capacity-introduction}.

\textit{Upper bound.}
Suppose that $R \geq C_N \text{diam}(A)^{a_{N-2}}$ with $a_{N-2}$ defined in Equation~\eqref{eq-a-m}.
Let
$\mathcal S^1,\ldots,\mathcal S^{N-2}$ be independent simple random
walk ranges, independent of $\mathcal T_c$. Choose
$z_R\in\mathbb Z^d$ such that $
    \|z_R\|=c_NR$.
Since $
    \operatorname{diam}
    \left(
        \mathcal T_c(B_R)+A
    \right)
    \lesssim
    R+\operatorname{diam}(A)$,
we may choose $c_N$ and then $R$ sufficiently large so that the
starting point condition in
AOSS's Equation~\eqref{eq-introduction-sum-rw-hitting} is satisfied. Therefore,
\begin{equation*}
\begin{split}
\mathbb E\left[
    \operatorname{Cap}_{d-2(N-2)}
    \left(
        \mathcal T_c(B_R)+A
    \right)
\right]
&\lesssim
\|z_R\|^{d-2(N-2)}
\mathbb P\left(
        z_R+\mathcal S^1+\cdots+\mathcal S^{N-2}
    \conn
        \mathcal T_c(B_R)+A
\right).
\end{split}
\end{equation*}

By reflection symmetry and the inclusion
$\mathcal T_c(B_R)\subseteq\mathcal T_c$,
\begin{equation*}
\begin{split}
\mathbb P\left(
        z_R+\mathcal S^1+\cdots+\mathcal S^{N-2}
   \conn
        \mathcal T_c(B_R)+A
\right)
&\leq
\mathbb P\left(
        z_R+\mathcal S^1+\cdots+
        \mathcal S^{N-2}+\mathcal T_c
    \conn A
\right).
\end{split}
\end{equation*}

For $R$ sufficiently large, $
    \|z_R\|
    \geq
    C_{N}
    \operatorname{diam}(A)^{a_{N-2}}$.
Hence, Proposition~\ref{prop-mixed-hitting-probability}, applied with
$m=N-2$, gives
\begin{equation*}
\begin{split}
\mathbb P\left(
        z_R+\mathcal S^1+\cdots+
        \mathcal S^{N-2}+\mathcal T_c
   \conn A
\right)
&\lesssim
\frac{
    \operatorname{Cap}_{d-2N}(A)
}{
    \|z_R\|^{d-2(N-1)}
}.
\end{split}
\end{equation*}

Combining the preceding estimates, we obtain
\begin{equation*}
\begin{split}
\mathbb E\left[
    \operatorname{Cap}_{d-2(N-2)}
    \left(
        \mathcal T_c(B_R)+A
    \right)
\right]
&\lesssim
\|z_R\|^{d-2(N-2)}
\frac{
    \operatorname{Cap}_{d-2N}(A)
}{
    \|z_R\|^{d-2(N-1)}
}
\lesssim
R^2
\operatorname{Cap}_{d-2N}(A).
\end{split}
\end{equation*}

\medskip
\noindent
\textit{Lower bound.}
Suppose that $R\geq cN \text{diam}(A)$.
Let $b_N>1$ be sufficiently large and then choose $M_N>b_N$
sufficiently large, with both constants depending only on $N$ and
$c_\star$. Set
\begin{equation*}
    R_2:=\frac{R}{M_N}
    \qquad\text{and}\qquad
    R_1:=b_NR_2.
\end{equation*}
Since $R$ is sufficiently large, we may assume that $
\label{eq-stopped-brw-lower-scale-condition}
    R_2\geq2\operatorname{diam}(A)$.

Let
$\mathcal S^1,\ldots,\mathcal S^{N-2}$ be independent simple random
walk ranges, independent of $\mathcal T_c$, and choose
$z_R\in\mathbb Z^d$ with $
    \|z_R\|=c_NR$,
up to integer rounding, where $c_N$ is sufficiently large. AOSS's Equation~\eqref{eq-introduction-sum-rw-hitting}, applied with
$N-2$ simple random walks, gives
\begin{equation}
\label{eq-general-stopped-brw-lower-representation}
\begin{split}
\mathbb E\left[
    \operatorname{Cap}_{d-2(N-2)}
    \left(
        \mathcal T_c(B_R)+A
    \right)
\right]
&\gtrsim
\|z_R\|^{d-2(N-2)}
\mathbb P\left(
        z_R+\mathcal S^1+\cdots+\mathcal S^{N-2}
    \conn
        \mathcal T_c(B_R)+A
\right).
\end{split}
\end{equation}

Let
\begin{equation*}
    H_{R_1}
    :=
    \inf\left\{
        n\geq0:
        z_R+S_n^1\in B_{R_1}
    \right\}.
\end{equation*}
Conditionally on the entrance point
$x\in\partial B_{R_1}$, let
$\widetilde{\mathcal S}^1,\ldots,
\widetilde{\mathcal S}^{N-2}$
be fresh independent simple random walk ranges and set
\begin{equation*}
    K_x
    :=
    x+
    \sum_{j=1}^{N-2}
    \widetilde{\mathcal S}^j(B_{R_2})
    -A.
\end{equation*}

By the strong Markov property and by restricting all the auxiliary
walks to their stopped ranges,
\begin{equation}
\label{eq-general-stopped-brw-lower-markov}
\begin{split}
\mathbb P\left(
        z_R+\mathcal S^1+\cdots+\mathcal S^{N-2}
    \conn
        \mathcal T_c(B_R)+A
\right)
&\geq
\mathbb P\left(
    H_{R_1}<\infty
\right)
\inf_{x\in\partial B_{R_1}}
\mathbb P\left(
    \mathcal T_c(B_R)\conn K_x
\right).
\end{split}
\end{equation}

Since $\|z_R\|$ is a sufficiently large multiple of $R_1$,
Equation~\eqref{eq-introduction-rw-hitting} and the capacity estimate
for balls give
\begin{equation}
\label{eq-general-hit-intermediate-ball}
\mathbb P\left(
    H_{R_1}<\infty
\right)
\gtrsim
\frac{
    R_1^{d-2}
}{
    \|z_R\|^{d-2}
}.
\end{equation}

We next verify the assumptions of
Lemma~\ref{lemma-stopped-to-full-brw}. Since $0\in A$ and each stopped
random walk range contains the origin, we have $x\in K_x$. Moreover, $
    K_x
    \subseteq
    B\left(
        x,
        (N-2)R_2+\operatorname{diam}(A)
    \right)$,
and hence
\begin{equation*}
    \operatorname{diam}(K_x)
    \leq
    2(N-2)R_2+\operatorname{diam}(A).
\end{equation*}
Furthermore,
\begin{equation*}
\begin{split}
R_1-(N-2)R_2-\operatorname{diam}(A)
&\leq
\operatorname{dist}(0,K_x)\leq
R_1
\end{split}
\end{equation*}
and
\begin{equation*}
\begin{split}
\operatorname{dist}
\left(
    \partial B_R,K_x
\right)
&\geq
R-R_1-(N-2)R_2-\operatorname{diam}(A).
\end{split}
\end{equation*}

By first taking $b_N$ sufficiently large and then $M_N$ sufficiently
large, we may ensure that, uniformly in
$x\in\partial B_{R_1}$, $
    \operatorname{dist}(0,K_x)
    \geq
    2\operatorname{diam}(K_x)$
and $
    \operatorname{dist}
    \left(
        \partial B_R,K_x
    \right)
    \geq
    \max\left\{
        2\operatorname{diam}(K_x),
        c_\star\operatorname{dist}(0,K_x)
    \right\}$.

Lemma~\ref{lemma-stopped-to-full-brw}, applied conditionally on $K_x$,
therefore gives
\begin{equation*}
\begin{split}
\mathbb P\left(
    \mathcal T_c(B_R)\conn K_x
    \,\middle|\,
    K_x
\right)
&\gtrsim
\mathbb P\left(
    \mathcal T_c\conn K_x
    \,\middle|\,
    K_x
\right).
\end{split}
\end{equation*}
Since $
    \operatorname{dist}(0,K_x)
    \leq
    R_1$,
Equations \eqref{eq-introduction-brw-hitting} and \eqref{bcap-equiv-(d-4)cap} give
\begin{equation*}
\begin{split}
\mathbb P\left(
    \mathcal T_c\conn K_x\right)
&\gtrsim
\frac {\mathbb E\left[
        \operatorname{Cap}_{d-4}(K_x)
    \right]
}{
    R_1^{d-2}
}.
\end{split}
\end{equation*}
By translation and reflection invariance,
\begin{equation*}
\begin{split}
\mathbb E\left[
    \operatorname{Cap}_{d-4}(K_x)
\right]
&=
\mathbb E\bigg[
    \operatorname{Cap}_{d-4}
    \bigg(
        \sum_{j=1}^{N-2}
        \widetilde{\mathcal S}^j(B_{R_2})+A
    \bigg)
\bigg].
\end{split}
\end{equation*}
Consequently, Proposition~\ref{prop-capacity-several-stopped-rw},
applied with $N-2$ stopped simple random walk ranges, yields
\begin{equation}
\label{eq-general-stopped-rw-input-lower}
\mathbb E\left[
    \operatorname{Cap}_{d-4}(K_x)
\right]
\gtrsim
R_2^{2(N-2)}
\operatorname{Cap}_{d-2N}(A).
\end{equation}

Combining
\eqref{eq-general-stopped-brw-lower-representation},
\eqref{eq-general-stopped-brw-lower-markov},
\eqref{eq-general-hit-intermediate-ball}, and
\eqref{eq-general-stopped-rw-input-lower}, we obtain
\begin{equation*}
\begin{split}
&\mathbb E\left[
    \operatorname{Cap}_{d-2(N-2)}
    \left(
        \mathcal T_c(B_R)+A
    \right)
\right]
\\
&\qquad\gtrsim
\|z_R\|^{d-2(N-2)}
\frac{
    R_1^{d-2}
}{
    \|z_R\|^{d-2}
}
\frac{
    R_2^{2(N-2)}
    \operatorname{Cap}_{d-2N}(A)
}{
    R_1^{d-2}
}
=
\frac{
    R_2^{2(N-2)}
}{
    \|z_R\|^{2N-6}
}
\operatorname{Cap}_{d-2N}(A).
\end{split}
\end{equation*}
Since $R_2\asymp R$ and $\|z_R\|\asymp R$, it follows that
\begin{equation*}
\mathbb E\left[
    \operatorname{Cap}_{d-2(N-2)}
    \left(
        \mathcal T_c(B_R)+A
    \right)
\right]
\gtrsim
R^2
\operatorname{Cap}_{d-2N}(A).
\end{equation*}
Together with the upper bound, this concludes the proof of part~\textup{(ii)} of
Theorem~\ref{theorem-main-capacity-introduction}.
\end{proof}

\section{Hitting by two critical branching random walks}
\label{section8}

The goal of this section is to prove Proposition~\ref{prop-hitting-several-brw} for $p=2$.
The lower bound is easy and follows by combining Zhu's estimate \eqref{eq-introduction-brw-hitting} with part \textup{(ii)} in Theorem~\ref{theorem-main-capacity-introduction}.

\begin{proof}[Lower bound in Proposition~\ref{prop-hitting-several-brw} for $p=2$]
For the lower bound, let $\|z\|\geq C \text{diam}(A)$ and
$R=\lfloor c\|z\|\rfloor$ with $c$ sufficiently small. Conditioning on $\mathcal T_c^2(B_R)$ and applying Zhu's estimate \eqref{eq-introduction-brw-hitting} gives
\begin{equation*}
\begin{split}
\mathbb P\left(z+\mathcal T_c^1+\mathcal T_c^2\conn A
\right)
&\gtrsim
\frac{
    \mathbb E\left[
        \operatorname{Cap}_{d-4}
        \left(A-\mathcal T_c^2(B_R)\right)
    \right]
}{\|z\|^{d-2}}.
\end{split}
\end{equation*}
By reflection symmetry and the lower bound in \eqref{eq-introduction-stopped-brw-capacity} of Theorem \ref{theorem-main-capacity-introduction} for $N=4$, 
\begin{equation*}
\mathbb E\left[
    \operatorname{Cap}_{d-4}
    \left(A-\mathcal T_c^2(B_R)\right)
\right]
\gtrsim
R^2\operatorname{Cap}_{d-8}(A).
\end{equation*}
Since $R\asymp\|z\|$, this proves the required lower bound.
\end{proof}

The rest of this section focuses on the upper bound and contains an important geometric construction of the paper.
A branching random walk may enter and leave the same annulus through
many distinct ancestral lines, so the stopping-time decomposition used
for a single random walk has no direct analogue. We replace it with
successive entrance stopping lines. The resulting annular
waves have geometrically decreasing expected entrance populations.

\subsection{Annular decomposition into waves}
\label{section-waves}

Let us now present the decomposition into waves that is used in the proof of the upper bound of Proposition~\ref{prop-hitting-several-brw}. The main difference with the decomposition of a simple random walk is that, for a critical branching random walk, stopping lines replace stopping times.

For any $i\in\mathbb{Z}$, we define
\begin{equation*}
    r_i=2^i\|z\|.
\end{equation*}
Let $I$ be the maximal index such that $
    r_{-I}\geq 4\operatorname{diam}(A)^4\sqrt{\operatorname{diam}(A)}$.
For any $i\geq -I$, let
\begin{equation*}
    \mathscr{C}_i=B(z,r_{i+1})\setminus B(z,r_{i-1}),
    \qquad
    \Sigma_i=\partial B(z,r_i).
\end{equation*}

For any $i\geq -I$ and $k\geq0$, we define successively two stopping lines. First,
\begin{equation*}
    \begin{split}
        \eta_i^k=\bigl\{u\in\mathcal{T}_c^1:\;&V_c^1(u)\in\Sigma_i,\text{ and there exists }v\in\sigma_i^{k-1}\text{ such that }v<u,\\
        &\text{and }V_c^1(w)\notin\Sigma_i\text{ for every }w\in[v,u)\bigr\}.
    \end{split}
\end{equation*}
Then,
\begin{equation*}
    \begin{split}
        \sigma_i^k=\bigl\{u\in\mathcal{T}_c^1:\;&V_c^1(u)\in\partial\mathscr{C}_i,\text{ and there exists }v\in\eta_i^k\text{ such that }v<u,\\
        &\text{and }V_c^1(w)\notin\partial\mathscr{C}_i\text{ for every }w\in[v,u)\bigr\}.
    \end{split}
\end{equation*}
We use the convention $\sigma_i^{-1}=\{\emptyset\}$ for $i\neq0$, and we set $
    \eta_0^0=\{\emptyset\}$,
since the root belongs to $\Sigma_0$.

 We call the $k$-th wave at level $i$ the collection of branching random
walks started from the particles in $\eta_i^k$ and stopped upon first
hitting $\partial\mathscr{C}_i$. The line $\eta_i^k$ therefore represents
the particles that initiate the $k$-th wave, while $\sigma_i^k$ consists
of the first particles of this wave to hit $\partial\mathscr{C}_i$.
The descendants of the particles in $\sigma_i^k$ that return to
$\Sigma_i$ form the next entrance line $\eta_i^{k+1}$.

We denote by $\mathcal{T}_c^{1,\eta_i^k}(\mathscr{C}_i)$ the contribution of the branching random walks starting from the vertices of $\eta_i^k$ and stopped when they first hit $\partial\mathscr{C}_i$, and we write
\begin{equation*}
    \mathcal{T}_c^{(1,i)}
    =
    \underset{k\geq0}{\bigcup}
    \mathcal{T}_c^{1,\eta_i^k}(\mathscr{C}_i).
\end{equation*}
Then,
\begin{equation}\label{wave-covering}
    \mathcal{T}_c^1
    \subseteq
    \left(\mathcal{T}_c^1\cap B(z,r_{-I})\right)
    \cup
    \left(
    \underset{i\geq-I}{\bigcup}
    \mathcal{T}_c^{(1,i)}
    \right).
\end{equation}

For a stopping line $L$, we denote by $\mathcal{F}_L$ the sigma-field generated by the tree and by the spatial positions revealed before the line $L$. Conditional on $\mathcal{F}_L$, the branching random walks starting from the vertices of $L$ are independent, and the branching random walk starting from $u\in L$ has the law of a critical branching random walk started from $V_c^1(u)$.

\begin{figure}[tb]
         \centering
         \includegraphics[width=0.5\linewidth]{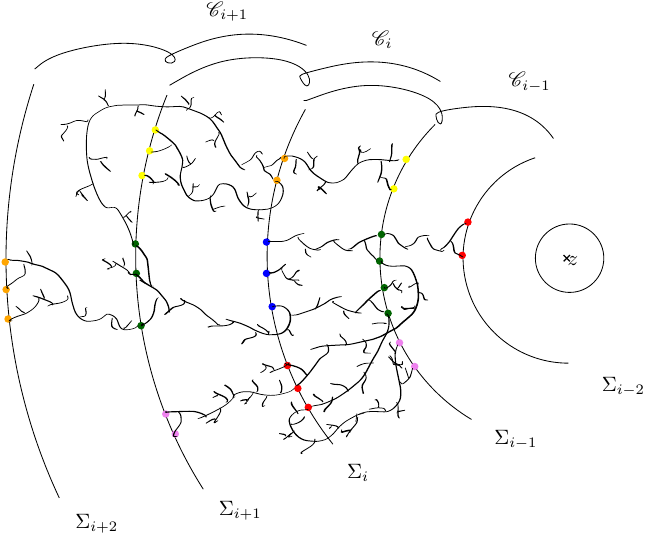}
         \caption{The first two waves at
    level $i$.}
         \label{waves}
     \end{figure}
Figure~\ref{waves} illustrates the first two waves of the
decomposition at the fixed level $i$. The blue particles on
$\Sigma_i$ form the initial entrance line $\eta_i^0$. Starting from
these particles, the green particles are the first descendants to
reach the boundary $
    \partial\mathscr C_i
    =
    \Sigma_{i-1}\cup\Sigma_{i+1}$;
they therefore form the exit line $\sigma_i^0$.

Some descendants of the particles in $\sigma_i^0$ subsequently return
to $\Sigma_i$. The first particles to do so form the next entrance
line $\eta_i^1$, represented in the figure by the red and orange
particles on $\Sigma_i$, according to whether their ancestral line
returns from $\Sigma_{i-1}$ or from $\Sigma_{i+1}$. Starting from
$\eta_i^1$, the first descendants to reach
$\partial\mathscr C_i$ form the next exit line $\sigma_i^1$,
represented by the yellow and purple particles. Iterating this
construction produces the successive entrance and exit lines
$\eta_i^k$ and $\sigma_i^k$, for $k\geq0$.

\subsection{Geometric decay of successive waves}

The main property of this decomposition is that the expected size of the successive entrance lines decreases geometrically.

\begin{lemma}
\label{lemma-geometric-decay-waves}
There exists a constant $\gamma_d>0$, independent of $i$, such that,
for every $i\geq-I$ and every $l\geq1$,
\begin{equation}
\label{eq-geometric-decay-waves}
\mathbb{E}\left[
    \#\eta_i^l
    \,\middle|\,
    \mathcal{F}_{\eta_i^{l-1}}
\right]
\leq
\left(
    1-\gamma_d
\right)
\#\eta_i^{l-1}.
\end{equation}
\end{lemma}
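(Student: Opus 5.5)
Conditionally on $\mathcal F_{\eta_i^{l-1}}$, the particles of $\eta_i^{l-1}$ start independent critical branching random walks from points of $\Sigma_i$ (Lemma~\ref{lemma-branching-property-stopping-line}). By the definitions of $\sigma_i^{l-1}$ and $\eta_i^l$, each element of $\eta_i^l$ descends from a unique $u\in\eta_i^{l-1}$. Its ancestral line starting at $u$ first reaches $\partial\mathscr C_i$ and only afterwards returns to $\Sigma_i$. By linearity it therefore suffices to show
\begin{equation*}
\sup_{x\in\Sigma_i}\mathbb E_x\left[\#\{\text{descendants first returning to }\Sigma_i\text{ after hitting }\partial\mathscr C_i\}\right]\leq 1-\gamma_d .
\end{equation*}
Here the count is over particles $w$ of a branching random walk from $x$ such that the path from the root to $w$ visits $\partial\mathscr C_i$, ends at $\Sigma_i$, and does not visit $\Sigma_i$ strictly between its first visit to $\partial\mathscr C_i$ and $w$.

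The plan is to apply the many-to-one formula (Lemma~\ref{lemma-many-to-one-generation}), summed over generations as in the proof of Lemma~\ref{lemma-stopped-to-full-brw}. The indicator that $w$ belongs to this return line is a functional of the ancestral path, and for a single path the return time, when finite, is unique. So the expectation above equals
\begin{equation*}
\mathbf P_x\left(\text{after first hitting }\partial\mathscr C_i,\ S\text{ returns to }\Sigma_i\right).
\end{equation*}
The stopping-line structure matters only through this uniqueness. Any $w$ in the return line has its ancestor on $\sigma_i^{l-1}$ given by the first hit of $\partial\mathscr C_i$ after $u$. Hence the total count is exactly the sum over paths, and there is no double counting.

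Next, apply the strong Markov property at the exit time from $\mathscr C_i$. The quantity above is at most $\sup_{y\in\Sigma_{i-1}\cup\Sigma_{i+1}}\mathbf P_y(\text{hit }\Sigma_i)$. From $y\in\Sigma_{i+1}$, i.e.\ at distance $2r_i$ from $z$, the probability to ever enter $B(z,r_i)$ is $\asymp 2^{-(d-2)}$ by the standard ball-hitting estimate. It is therefore at most $1-c$ uniformly in $i$: by transience the walk escapes to infinity with probability bounded below, uniformly by scaling. From $y\in\Sigma_{i-1}$ (inside) the walk must exit $B(z,r_i)$. The key point is that with probability bounded below uniformly in $r_i$ it first returns deep inside and then escapes, but this is not enough, since exiting the inner ball through the whole annulus is certain for a transient walk started inside. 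So the inner boundary is the main obstacle: from $\Sigma_{i-1}$, $\mathbf P_y(\text{hit }\Sigma_i)=1$.

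I would handle this by splitting $\eta_i^l$ according to the side of the exit point. The return from inside carries no loss by itself. However, a particle starting on $\Sigma_i$ exits through $\Sigma_{i+1}$ with probability bounded below uniformly (invariance principle: the exit distribution from an annulus of fixed aspect ratio is uniformly nondegenerate). Combining, $\mathbf P_x(\text{return})\leq 1-\mathbf P_x(\text{exit via }\Sigma_{i+1})\,\inf_{y\in\Sigma_{i+1}}\mathbf P_y(\text{never hit }B(z,r_i))\leq 1-c_1c_2=:1-\gamma_d$. The constants $c_1,c_2>0$ depend only on $d$, by Brownian scaling together with the local CLT for lattice effects; for small $r_i$, note $r_{-I}\geq 4$ and use a direct finite check. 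This gives \eqref{eq-geometric-decay-waves}.
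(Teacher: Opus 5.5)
Your proposal is correct and is essentially the paper's argument. The paper splits the exit line $\sigma_i^{l-1}$ into the particles $L_{i+1}^{l-1}$ and $L_{i-1}^{l-1}$ exiting through $\Sigma_{i+1}$ and $\Sigma_{i-1}$, bounds the expected returns to $\Sigma_i$ by $c_d^*<1$ and by $1$ respectively, and uses $\mathbb{E}[\#L_{i+1}^{l-1}\mid\mathcal{F}_{\eta_i^{l-1}}]\geq p_d\,\#\eta_i^{l-1}$ to obtain $\gamma_d=p_d(1-c_d^*)$, which is exactly your $c_1c_2$; the only difference is bookkeeping, since you apply the many-to-one formula once to the composite path event and split by exit side for a single walk via the strong Markov property, whereas the paper applies many-to-one twice through the intermediate stopping line $\sigma_i^{l-1}$.
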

\begin{proof}
For every $u\in\sigma_i^{l-1}$, let $\xi^{(u)}$ denote the number of
descendants of $u$ that first reach $\Sigma_i$. More precisely,
\begin{equation*}
\begin{split}
\xi^{(u)}
:=
\sum_{v\succeq u}
\mathds{1}_{\left\{
    V_c^1(v)\in\Sigma_i,\,
    V_c^1(w)\notin\Sigma_i
    \text{ for every }u\preceq w\prec v
\right\}}.
\end{split}
\end{equation*}
By the definition of the successive waves, $
    \#\eta_i^l
    =
    \sum_{u\in\sigma_i^{l-1}}
    \xi^{(u)}$.

We divide the stopping line $\sigma_i^{l-1}$ according to the
boundary through which its particles exit the annulus. Set
\begin{equation*}
    L_{i+1}^{l-1}
    :=
    \left\{
        u\in\sigma_i^{l-1}:
        V_c^1(u)\in\Sigma_{i+1}
    \right\}, \qquad  L_{i-1}^{l-1}
    :=
    \left\{
        u\in\sigma_i^{l-1}:
        V_c^1(u)\in\Sigma_{i-1}
    \right\}.
\end{equation*}
Then $
    \sigma_i^{l-1}
    =
    L_{i+1}^{l-1}
    \bigcup
    L_{i-1}^{l-1}$.
Let $H_{r_j}$ denote the hitting time of $\Sigma_j$ by a simple
random walk. By the branching property at the stopping line
$\sigma_i^{l-1}$, conditionally on
$\mathcal{F}_{\sigma_i^{l-1}}$, the descendant trees rooted at the
vertices of $\sigma_i^{l-1}$ are independent critical branching
random walks. Therefore,
\begin{equation}
\label{eq-eta-conditional-sigma}
\begin{split}
\mathbb{E}\left[
    \#\eta_i^l
    \,\middle|\,
    \mathcal{F}_{\sigma_i^{l-1}}
\right]
&=
\sum_{u\in L_{i+1}^{l-1}}
\mathbb{E}\left[
    \xi^{(u)}
    \,\middle|\,
    \mathcal{F}_{\sigma_i^{l-1}}
\right]
+
\sum_{u\in L_{i-1}^{l-1}}
\mathbb{E}\left[
    \xi^{(u)}
    \,\middle|\,
    \mathcal{F}_{\sigma_i^{l-1}}
\right].
\end{split}
\end{equation}
Let $u\in L_{i+1}^{l-1}$. Conditionally on
$\mathcal{F}_{\sigma_i^{l-1}}$, the branching random walk rooted at
$u$ starts from $V_c^1(u)\in\Sigma_{i+1}$. Let $Z_n^{(u)}$ denote the
number of descendants of $u$ at genealogical distance $n$. Since the
offspring distribution is critical, $\mathbb{E}\left[Z_n^{(u)}\right]=1$.
The many-to-one formula gives
\begin{equation*}
\begin{split}
\mathbb{E}\left[
    \xi^{(u)}
    \,\middle|\,
    \mathcal{F}_{\sigma_i^{l-1}}
\right]
&=
\sum_{n\geq0}
\mathbb{E}\Bigg[
    \sum_{\substack{
        v\succeq u\\
        |v|-|u|=n
    }}
    \mathds{1}_{\left\{
        V_c^1(v)\in\Sigma_i,\,
        V_c^1(w)\notin\Sigma_i
        \text{ for every }u\preceq w\prec v
    \right\}}
    \;\Bigg|\;
    \mathcal{F}_{\sigma_i^{l-1}}
\Bigg]
\\
&=
\sum_{n\geq0}
\mathbf{P}_{V_c^1(u)}
\left(
    H_{r_i}=n
\right)
=
\mathbf{P}_{V_c^1(u)}
\left(
    H_{r_i}<\infty
\right).
\end{split}
\end{equation*}
We claim that there exists a constant $c_d^*\in(0,1)$, independent of
$i$, such that
\begin{equation}
\label{eq-uniform-outer-return}
    \sup_{x\in\Sigma_{i+1}}
    \mathbf{P}_x\left(
        H_{r_i}<\infty
    \right)
    \leq
    c_d^*.
\end{equation}
Indeed, by a Gambler's ruin estimate, for $x\notin B(z,r_i)$
\begin{equation*}
\begin{split}
\mathbf{P}_x\left(
    H_{r_i}<\infty
\right)
&\leq
\frac{
    g(x-z)
}{
    \displaystyle
    \inf_{y\in\Sigma_i}g(y-z)
},
\end{split}
\end{equation*}
which by Green function asymptotic $
    g(y)
    =
    a_d\|y\|^{2-d}
    +
    \mathcal{O}\left(
        \|y\|^{-d}
    \right)$,
and the relation $r_{i+1}=2r_i$ gives
\begin{equation*}
\begin{split}
\sup_{x\in\Sigma_{i+1}}
\mathbf{P}_x\left(
    H_{r_i}<\infty
\right)
&\leq
2^{2-d}
+
\mathcal{O}\left(
    r_i^{-1}
\right).
\end{split}
\end{equation*}
Consequently, there exists $c_d^*<1$, independent of $i$, such that
\eqref{eq-uniform-outer-return} holds.
On the other hand, if $u\in L_{i-1}^{l-1}$, the many-to-one formula
similarly gives
\begin{equation}
\label{eq-xi-inner-boundary}
\begin{split}
\mathbb{E}\left[
    \xi^{(u)}
    \,\middle|\,
    \mathcal{F}_{\sigma_i^{l-1}}
\right]
&=
\mathbf{P}_{V_c^1(u)}
\left(
    H_{r_i}<\infty
\right)
\leq
1.
\end{split}
\end{equation}

It follows from
\eqref{eq-eta-conditional-sigma},
\eqref{eq-uniform-outer-return}, and
\eqref{eq-xi-inner-boundary} that
\begin{equation}
\label{eq-return-line-estimate-detailed}
\begin{split}
\mathbb{E}\left[
    \#\eta_i^l
    \,\middle|\,
    \mathcal{F}_{\sigma_i^{l-1}}
\right]
&\leq
c_d^*\#L_{i+1}^{l-1}
+
\#L_{i-1}^{l-1}.
\end{split}
\end{equation}
We now estimate the conditional expected number of particles in
$L_{i+1}^{l-1}$.

Conditionally on $\mathcal{F}_{\eta_i^{l-1}}$, every particle
$u\in\eta_i^{l-1}$ starts an independent critical branching random
walk from $V_c^1(u)\in\Sigma_i$, stopped when it first reaches either
$\Sigma_{i-1}$ or $\Sigma_{i+1}$. By the many-to-one formula,
\begin{equation}
\label{eq-expected-outer-line}
\begin{split}
  \mathbb{E}\left[
    \#L_{i+1}^{l-1}
    \,\middle|\,
    \mathcal{F}_{\eta_i^{l-1}}
\right]
&=
\sum_{u\in\eta_i^{l-1}}
\mathbf{P}_{V_c^1(u)}
\left(
    H_{r_{i+1}}<H_{r_{i-1}}
\right).
\end{split}
\end{equation}
For $x\in\Sigma_i$, the standard exit estimate from an annulus gives
\begin{equation*}
\begin{split}
\mathbf{P}_x\left(
    H_{r_{i+1}}<H_{r_{i-1}}
\right)
&=
\frac{
    r_{i-1}^{2-d}
    -
    \|x-z\|^{2-d}
    +
    \mathcal{O}\left(
        r_{i-1}^{1-d}
    \right)
}{
    r_{i-1}^{2-d}
    -
    r_{i+1}^{2-d}
}=\frac{
    1-2^{2-d}
    +
    \mathcal{O}\left(
        r_{i-1}^{-1}
    \right)
}{
    1-4^{2-d}
}.
\end{split}
\end{equation*}

The limiting ratio is strictly positive. Hence, by a different constant, if necessary, there exists $p_d>0$, independent of $i$, such
that
\begin{equation}
\label{eq-uniform-outer-exit-probability}
    \inf_{x\in\Sigma_i}
    \mathbf{P}_x\left(
        H_{r_{i+1}}<H_{r_{i-1}}
    \right)
    \geq
    p_d.
\end{equation}
Combining
\eqref{eq-expected-outer-line} and
\eqref{eq-uniform-outer-exit-probability}, we obtain
\begin{equation}
\label{eq-lower-outer-line}
\mathbb{E}\left[
    \#L_{i+1}^{l-1}
    \,\middle|\,
    \mathcal{F}_{\eta_i^{l-1}}
\right]
\geq
p_d\#\eta_i^{l-1}.
\end{equation}
Moreover, since the random walk exits the finite annulus almost
surely, the many-to-one formula gives
\begin{equation*}
\begin{split}
\mathbb{E}\left[
    \#L_{i+1}^{l-1}
    +
    \#L_{i-1}^{l-1}
    \,\middle|\,
    \mathcal{F}_{\eta_i^{l-1}}
\right]
&=
\sum_{u\in\eta_i^{l-1}}
\mathbf{P}_{V_c^1(u)}
\left(
    H_{r_{i-1}}\wedge H_{r_{i+1}}
    <\infty
\right)=
\#\eta_i^{l-1}.
\end{split}
\end{equation*}
Consequently,
\begin{equation}
\label{eq-upper-inner-line}
\begin{split}
\mathbb{E}\left[
    \#L_{i-1}^{l-1}
    \,\middle|\,
    \mathcal{F}_{\eta_i^{l-1}}
\right]
&=
\#\eta_i^{l-1}
-
\mathbb{E}\left[
    \#L_{i+1}^{l-1}
    \,\middle|\,
    \mathcal{F}_{\eta_i^{l-1}}
\right]
\leq
\left(
    1-p_d
\right)
\#\eta_i^{l-1}.
\end{split}
\end{equation}
Taking the conditional expectation of
\eqref{eq-return-line-estimate-detailed} with respect to
$\mathcal{F}_{\eta_i^{l-1}}$, and using
\eqref{eq-lower-outer-line} and
\eqref{eq-upper-inner-line}, we get
\begin{equation*}
\begin{split}
\mathbb{E}\left[
    \#\eta_i^l
    \,\middle|\,
    \mathcal{F}_{\eta_i^{l-1}}
\right]
&\leq
c_d^*
\mathbb{E}\left[
    \#L_{i+1}^{l-1}
    \,\middle|\,
    \mathcal{F}_{\eta_i^{l-1}}
\right]
+
\mathbb{E}\left[
    \#L_{i-1}^{l-1}
    \,\middle|\,
    \mathcal{F}_{\eta_i^{l-1}}
\right]
\\
&=
\#\eta_i^{l-1}
-
\left(
    1-c_d^*
\right)
\mathbb{E}\left[
    \#L_{i+1}^{l-1}
    \,\middle|\,
    \mathcal{F}_{\eta_i^{l-1}}
\right]
\\
&\leq
\left[
    1-
    p_d\left(
        1-c_d^*
    \right)
\right]
\#\eta_i^{l-1}.
\end{split}
\end{equation*}
Set $
    \gamma_d
    :=
    p_d\left(
        1-c_d^*
    \right)>0$.
We conclude that
\begin{equation*}
\mathbb{E}\left[
    \#\eta_i^l
    \,\middle|\,
    \mathcal{F}_{\eta_i^{l-1}}
\right]
\leq
\left(
    1-\gamma_d
\right)
\#\eta_i^{l-1},
\end{equation*}
which proves the lemma.
\end{proof}

By taking the expectation in Equation~\eqref{eq-geometric-decay-waves}, we get that,
\begin{equation*}
    \sum_{k \geq 0}\mathbb{E}\left[\#\eta_i^k\right]
    \leq \sum_{k \geq 0}
    (1-\gamma_d)
    \mathbb{E}\left[\#\eta_i^{k-1}\right]\leq \sum_{k \geq 0}
    (1-\gamma_d)^k
    \mathbb{E}\left[\#\eta_i^0\right]\leq \frac{1}{\gamma_d}\left[\#\eta_i^0\right].
\end{equation*}

Finally, the many-to-one formula gives
\begin{equation*}
    \mathbb{E}\left[\#\eta_i^0\right]
    =
    \mathbf P_0(H_{r_i}<\infty) \lesssim
    \frac{r_i^{d-2}}{\|z\|^{d-2}}
    \mathds{1}_{\{-I\leq i<0\}}
    +
    \mathds{1}_{\{i\geq0\}}.
\end{equation*}
Combining the last two equations, we obtain
\begin{equation}\label{total-number-waves}
    \sum_{k\geq0}
    \mathbb{E}\left[\#\eta_i^k\right]
    \lesssim
    \frac{r_i^{d-2}}{\|z\|^{d-2}}
    \mathds{1}_{\{-I\leq i<0\}}
    +
    \mathds{1}_{\{i\geq0\}}.
\end{equation}

\subsection{Capacity of the waves and upper bound}

\begin{proof}[Upper bound in Proposition~\ref{prop-hitting-several-brw} for $p=2$]
We first estimate the capacity of one annular level.
Using the subadditivity of the $(d-4)$-capacity and the branching property at the stopping lines $\eta_i^k$, we get
\begin{equation*}
    \begin{split}
        \mathbb{E}\left[
            \mathrm{Cap}_{d-4}
            \left(\mathcal{T}_c^{(1,i)}+A\right)
        \right]
        &\leq
        \sum_{k\geq0}
        \mathbb{E}\left[
            \mathrm{Cap}_{d-4}
            \left(
                \mathcal{T}_c^{1,\eta_i^k}(\mathscr{C}_i)+A
            \right)
        \right]\\
        &\leq
        \sum_{k\geq0}
        \mathbb{E}\left[\#\eta_i^k\right]
        \sup_{x\in\Sigma_i}
        \mathbb{E}\left[
            \mathrm{Cap}_{d-4}
            \left(
                \mathcal{T}_c^{1,x}(\mathscr{C}_i)+A
            \right)
        \right].
    \end{split}
\end{equation*}
For any $x\in\Sigma_i$, $
    \mathcal{T}_c^{1,x}(\mathscr{C}_i)
    \subseteq
    \mathcal{T}_c^{1,x}(B(x,3r_i))$.
Using the translation invariance of the capacity and, since $r_i \gtrsim \text{diam}(A)^4\sqrt{\text{diam}(A)}=\text{diam}(A)^{a_2}$, the upper bound in Equation~\eqref{eq-introduction-stopped-brw-capacity} for $N=4$, we obtain
\begin{equation*}
    \sup_{x\in\Sigma_i}
    \mathbb{E}\left[
        \mathrm{Cap}_{d-4}
        \left(
            \mathcal{T}_c^{1,x}(\mathscr{C}_i)+A
        \right)
    \right]
    \lesssim
    r_i^2\mathrm{Cap}_{d-8}(A).
\end{equation*}
Together with Equation~\eqref{total-number-waves}, this gives
\begin{equation}\label{capacity-one-wave-level}
    \mathbb{E}\left[
        \mathrm{Cap}_{d-4}
        \left(\mathcal{T}_c^{(1,i)}+A\right)
    \right]
    \lesssim
    r_i^2\mathrm{Cap}_{d-8}(A)
    \left(
        \frac{r_i^{d-2}}{\|z\|^{d-2}}
        \mathds{1}_{\{-I\leq i<0\}}
        +
        \mathds{1}_{\{i\geq0\}}
    \right).
\end{equation}
We are now ready to prove the upper bound of Proposition~\ref{prop-hitting-several-brw} for $p=2$. Assume that $\|z\|\geq C_2\,\text{diam}(A)^{\beta_2}$, where by definition $\beta_2=13/2$. 

By the symmetry of the jump distribution,
\begin{equation*}
\begin{split}
\mathbb{P}\left(
        z+\mathcal{T}_c^1+\mathcal{T}_c^2
    \conn A
\right)
&=
\mathbb{P}\left(
        z+\mathcal{T}_c^2
   \conn
        \mathcal{T}_c^1+A
\right).
\end{split}
\end{equation*}
By the covering in Equation~\eqref{wave-covering} and a union bound,
\begin{equation}
\label{eq-two-brw-union-bound}
\begin{split}
\mathbb{P}\left(
        z+\mathcal{T}_c^1+\mathcal{T}_c^2
    \conn A
\right)
&\leq
P_{0}
+
\sum_{i\geq-I}P_i,
\end{split}
\end{equation}
where
\begin{equation*}
\begin{split}
P_{0}
:=
\mathbb{P}\left(
        z+\mathcal{T}_c^2\conn
            \mathcal{T}_c^1\cap B(z,r_{-I})+A
\right), \qquad P_i
:=
\mathbb{P}\left(
        z+\mathcal{T}_c^2 \conn
        \mathcal{T}_c^{(1,i)}+A
\right).
\end{split}
\end{equation*}

For the first term, we can see that
\begin{equation}
\label{eq-inner-two-brw-inclusion}
\begin{split}
P_{0}
&\leq
\mathbb{P}\left(
    \mathcal{T}_c^1
   \conn
        \left(
            z+\mathcal{T}_c^2
        \right)
        \cap B(z,2r_{-I})-A
\right).
\end{split}
\end{equation}
Since the
conditions of Equation~\eqref{eq-introduction-brw-hitting} are satisfied,
\begin{equation}
\label{eq-inner-two-brw-capacity}
    P_{0}
    \lesssim
    \frac{
        \mathbb{E}\left[
            \mathrm{Cap}_{d-4}\left(\left(
            z+\mathcal{T}_c^2
        \right)
        \cap B(z,2r_{-I})-A\right)
        \right]
    }{
        \|z\|^{d-2}
    }.
\end{equation}

By Equation~\eqref{capofsum},
\begin{equation*}
\begin{split}
\mathrm{Cap}_{d-4}\left(\left(
            z+\mathcal{T}_c^2
        \right)
        \cap B(z,2r_{-I})-A\right)
&\leq
\mathrm{Cap}_{d-4}(-A)
\left|
    \left(
        z+\mathcal{T}_c^2
    \right)
    \cap B(z,2r_{-I})
\right|.
\end{split}
\end{equation*}
Furthermore,
\begin{equation*}
\begin{split}
\mathbb{E}\left[
    \left|
        \left(
            z+\mathcal{T}_c^2
        \right)
        \cap B(z,2r_{-I})
    \right|
\right]
&=
\mathbb{E}\left[
    \left|
        \mathcal{T}_c^2
        \cap B(0,2r_{-I})
    \right|
\right]
\leq
\sum_{x\in B(0,2r_{-I})}g(x)
\lesssim
r_{-I}^2.
\end{split}
\end{equation*}
Using now equations \eqref{eq-capacity-invariances} and \eqref{comparisonof2cap},
\begin{equation*}
\begin{split}
\mathrm{Cap}_{d-4}(-A)
&=
\mathrm{Cap}_{d-4}(A)\lesssim
\operatorname{diam}(A)^4\mathrm{Cap}_{d-8}(A),
\end{split}
\end{equation*}
we obtain
\begin{equation*}
\begin{split}
\mathbb{E}\left[
    \mathrm{Cap}_{d-4}\left(\left(
            z+\mathcal{T}_c^2
        \right)
        \cap B(z,2r_{-I})-A\right)
\right]
&\lesssim
\operatorname{diam}(A)^4r_{-I}^2
\mathrm{Cap}_{d-8}(A)\lesssim
\operatorname{diam}(A)^{13}\mathrm{Cap}_{d-8}(A).
\end{split}
\end{equation*}
Because $
    \operatorname{diam}(A)^{13}\lesssim\|z\|^2$,
Equation~\eqref{eq-inner-two-brw-capacity} yields
\begin{equation}
\label{eq-inner-two-brw-final}
    P_{0}
    \lesssim
    \frac{
        \mathrm{Cap}_{d-8}(A)
    }{
        \|z\|^{d-4}
    }.
\end{equation}

\noindent
\textit{The annular contributions. }
Observe that, for any $i \geq -I$, we can always split $\mathcal{T}_c^{(1,i)}$ into a finite number of pieces, independent of $z$ and $A$, with diameter at most $c_0 \,r_i$, where $c_0>0$ is a sufficiently small constant depending only on the
dimension. We may therefore apply
Equation~\eqref{eq-introduction-brw-hitting} to each piece. Using the monotonicity of capacity, we obtain
\begin{equation}
\label{eq-annular-hit-from-capacity}
\begin{split}
P_i
&\lesssim
\frac{
    \mathbb{E}\left[
        \mathrm{Cap}_{d-4}
        \left(
            \mathcal{T}_c^{(1,i)}+A
        \right)
    \right]
}{
    r_i^{d-2}
}.
\end{split}
\end{equation}

By Equation~\eqref{capacity-one-wave-level},
\begin{equation*}
\begin{split}
\mathbb{E}\left[
    \mathrm{Cap}_{d-4}
    \left(
        \mathcal{T}_c^{(1,i)}+A
    \right)
\right]
&\lesssim
r_i^2\mathrm{Cap}_{d-8}(A)
\left[
    \frac{
        r_i^{d-2}
    }{
        \|z\|^{d-2}
    }
    \mathds{1}_{\{-I\leq i<0\}}
    +
    \mathds{1}_{\{i\geq0\}}
\right].
\end{split}
\end{equation*}
Substituting this estimate into
\eqref{eq-annular-hit-from-capacity}, we find that
\begin{equation}
\label{eq-annular-two-brw-bound}
P_i
\lesssim
\mathrm{Cap}_{d-8}(A)
\left[
    \frac{
        r_i^2
    }{
        \|z\|^{d-2}
    }
    \mathds{1}_{\{-I\leq i<0\}}
    +
    r_i^{4-d}
    \mathds{1}_{\{i\geq0\}}
\right].
\end{equation}

For the negative scales,
\begin{equation}\label{eq-annular-two-brw-final1}
\begin{split}
\sum_{i=-I}^{-1}P_i
&\lesssim
\frac{
    \mathrm{Cap}_{d-8}(A)
}{
    \|z\|^{d-2}
}
\sum_{i=-I}^{-1}r_i^2
\lesssim
\frac{
    \mathrm{Cap}_{d-8}(A)
}{
    \|z\|^{d-4}
}.
\end{split}
\end{equation}
For the nonnegative scales, since $d\geq9$,
\begin{equation}\label{eq-annular-two-brw-final2}
\begin{split}
\sum_{i\geq0}P_i
&\lesssim
\mathrm{Cap}_{d-8}(A)
\sum_{i\geq0}r_i^{4-d}
=
\frac{
    \mathrm{Cap}_{d-8}(A)
}{
    \|z\|^{d-4}
}
\sum_{i\geq0}2^{-i(d-4)}
\lesssim
\frac{
    \mathrm{Cap}_{d-8}(A)
}{
    \|z\|^{d-4}
}.
\end{split}
\end{equation}

Finally, Equations~\eqref{eq-two-brw-union-bound},
\eqref{eq-inner-two-brw-final}, \eqref{eq-annular-two-brw-final1} and
\eqref{eq-annular-two-brw-final2} imply that
\begin{equation*}
\begin{split}
\mathbb{P}\left(
        z+\mathcal{T}_c^1+\mathcal{T}_c^2
   \conn A
\right)
&\lesssim
\frac{
    \mathrm{Cap}_{d-8}(A)
}{
    \|z\|^{d-4}
}.
\end{split}
\end{equation*}
This proves the upper bound in Proposition~\ref{prop-hitting-several-brw}.
\end{proof}

\section{Several critical branching random walks}
\label{section-several-brw}

We now extend the preceding result to the sum of an arbitrary number
of independent critical branching random walk ranges, and prove Proposition~\ref{prop-hitting-several-brw}.

The lower bound uses only the lower capacity estimate for one stopped
branching random walk sausage. 
For the upper bound, the proof
proceeds by induction on the number of branching random walks. 
We reuse the
annular-wave decomposition introduced in
Section~\ref{section8}. 
However, as in the proof of
Proposition~\ref{prop-mixed-hitting-probability}, an additional local
refinement is required in order to preserve the starting point
condition in the induction hypothesis.

\subsection{Lower bound}
\begin{proof}[Proof of the lower bound]
We proceed by induction on $p$. The case $p=2$ was proved in
Section~\ref{section8}. Assume that the lower bound holds for the sum
of $p-1$ critical branching random walks under the linear starting
point condition.

Choose $
    R
    :=
    \left\lfloor
        \varepsilon_p\|z\|
    \right\rfloor$,
where $\varepsilon_p>0$ is sufficiently small. Restricting the last
branching random walk to its range before leaving $B_R$, we obtain
\begin{equation*}
\begin{split}
\mathbb P\left(
        z+\mathcal T_c^1+\cdots+\mathcal T_c^p
   \conn A
\right)
&\geq
\mathbb P\left(
            z+\mathcal T_c^1+\cdots+\mathcal T_c^{p-1}\conn
            A-\mathcal T_c^p(B_R)
    \right).
\end{split}
\end{equation*}

By choosing $\varepsilon_p$ sufficiently small, the point $z$
satisfies the linear starting point condition required for the
lower-bound induction hypothesis. Hence,
\begin{equation}
\label{eq-lower-several-brw-induction}
\begin{split}
\mathbb P\left(
        z+\mathcal T_c^1+\cdots+\mathcal T_c^p
   \conn A
\right)
&\gtrsim
\frac{
    \mathbb E\left[
        \operatorname{Cap}_{d-4(p-1)}
        \left(
            A-\mathcal T_c^p(B_R)
        \right)
    \right]
}{
    \|z\|^{d-2(p-1)}
}.
\end{split}
\end{equation}

By the lower bound in
Equation~\eqref{eq-introduction-stopped-brw-capacity} of theorem \ref{theorem-main-capacity-introduction},
\begin{equation}
\label{eq-lower-several-brw-stopped-capacity}
\begin{split}
\mathbb E\left[
    \operatorname{Cap}_{d-4(p-1)}
    \left(
        A-\mathcal T_c^p(B_R)
    \right)
\right]
&\gtrsim
R^2
\operatorname{Cap}_{d-4p}(A).
\end{split}
\end{equation}

Combining
\eqref{eq-lower-several-brw-induction} and
\eqref{eq-lower-several-brw-stopped-capacity}, and using
$R\asymp\|z\|$, we obtain
\begin{equation*}
\begin{split}
\mathbb P\left(
        z+\mathcal T_c^1+\cdots+\mathcal T_c^p
  \conn A
\right)
&\gtrsim
\frac{
    R^2\operatorname{Cap}_{d-4p}(A)
}{
    \|z\|^{d-2(p-1)}
}
\asymp
\frac{
    \operatorname{Cap}_{d-4p}(A)
}{
    \|z\|^{d-2p}
}.
\end{split}
\end{equation*}
This proves the lower bound.
\end{proof}

\subsection{A local refinement of the annular decomposition}
\label{subsection-local-block-decomposition}

We now prove the upper bound by induction on $p$. As explained above,
the annular-wave decomposition alone is not sufficient to apply the
induction hypothesis. Each annular wave must first be subdivided into
smaller local pieces in order to verify the required starting point
condition.

In the proof for two branching random walks, an entire annular wave
can be used directly because the one-branching-random-walk hitting
estimate \eqref{eq-introduction-brw-hitting} requires only a linear starting point
condition. For the induction on the number of branching random walks,
this is no longer possible. Indeed, a wave at level $i$ has diameter
of order $r_i$, while its distance from $z$ is also of order $r_i$.
On the other hand, the induction hypothesis for the remaining
$p-1$ branching random walks requires the nonlinear starting point
condition
\begin{equation*}
    \operatorname{dist}(z,K)
    \geq
    C_{p-1}\operatorname{diam}(K)^{\beta_{p-1}}
\end{equation*}
for the target set $K$, where the exponents $\beta_p$ are defined recursively in~\eqref{eq-chi-beta-several-brw} below.

To overcome this obstruction, we refine each annular subtree into
local branching random walk blocks of radius
\begin{equation}
\label{eq-definition-local-scale-general}
    \ell_i
    :=
    \left\lfloor
        c_\star r_i^{1/\beta_{p-1}}
    \right\rfloor,
\end{equation}
where $c_\star>0$ is chosen sufficiently small. Each block then has
diameter of order $\ell_i$, while its distance from $z$ remains
comparable to $r_i$. Consequently, $
    r_i
    \gtrsim
    \ell_i^{\beta_{p-1}}$,
which is precisely the scale relation needed to verify the starting
point condition in the induction hypothesis.

This local block decomposition is the only new geometric ingredient
required to pass from two to an arbitrary number of critical
branching random walks.

Fix $i\geq-I$ and $k\geq0$. Recall that $\eta_i^k$ is the $k$-th
entrance line on $\Sigma_i$, and that $
    \mathcal T_c^{1,\eta_i^k}(\mathscr C_i)$
is the contribution of the descendants of the particles in
$\eta_i^k$ before they first reach $\partial\mathscr C_i$.
For every $u\in\eta_i^k$, set $
    x_u:=V_c^1(u)$.
Since $x_u\in\Sigma_i$ and the annulus $\mathscr C_i$ has diameter
comparable to $r_i$, we have
\begin{equation*}
    \mathcal T_c^{1,u}(\mathscr C_i)
    \subseteq
    \mathcal T_c^{1,u}
    \left(
        B(x_u,3r_i)
    \right).
\end{equation*}

For a vertex $v\succeq u$, define the first-exit stopping line
\begin{equation*}
\label{eq-local-exit-line-general}
\begin{split}
\eta_{\ell_i}(v)
:=
\bigl\{
    w\succeq v:\,
    \|V_c^1(w)-V_c^1(v)\|>\ell_i,
    \text{ and }
    \|V_c^1(y)-V_c^1(v)\|\leq\ell_i
    \text{ for every }
    v\preceq y\prec w
\bigr\}.
\end{split}
\end{equation*}
We now define recursively the local generations associated with the
particle $u$. Set $
    D_{i,k,0}(u):=\{u\}$,
and, for every $q\geq0$, let
\begin{equation*}
\label{eq-local-generation-recursion-general}
\begin{split}
D_{i,k,q+1}(u)
:=
\bigcup_{v\in D_{i,k,q}(u)}
\bigl\{
    w\in\eta_{\ell_i}(v):\,
    V_c^1(y)\in\mathscr C_i \forall y:
    v\preceq y\preceq w
\bigr\}.
\end{split}
\end{equation*}
\begin{figure}[tb]
         \centering
         \includegraphics[width=0.5\linewidth]{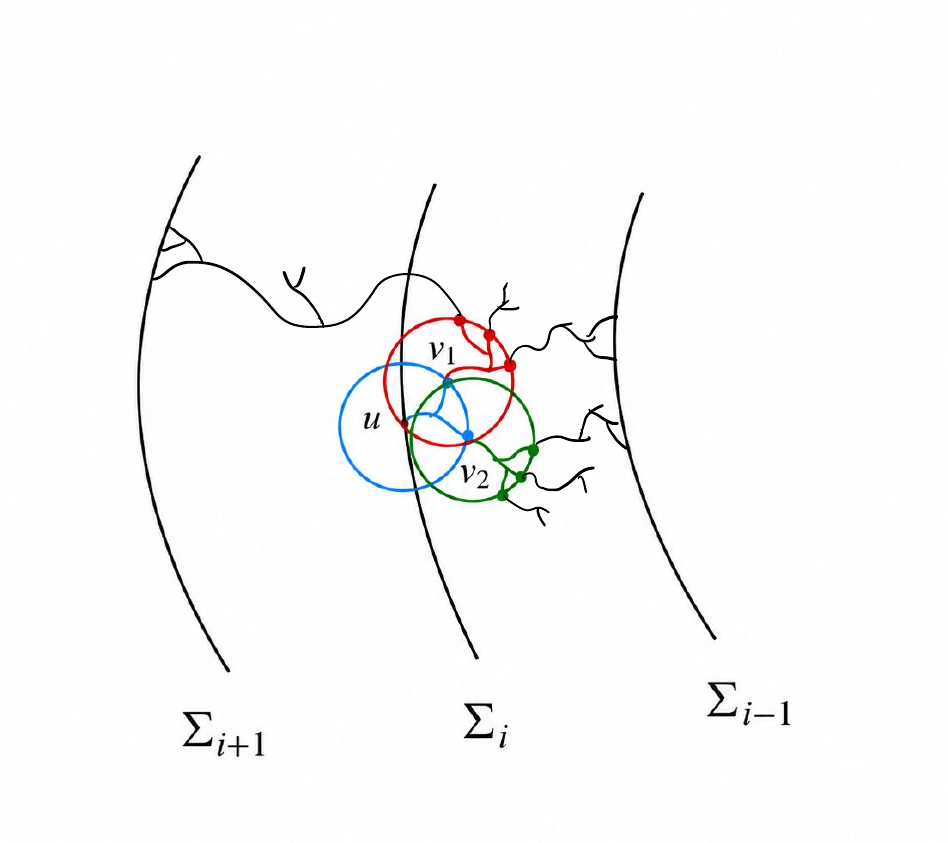}
         \caption{$v_1$ and $v_2$ form $D_{i,k,1}$, and the red and green vertices form $D_{i,k,2}$.}
         \label{refinement}
     \end{figure}
Thus, a vertex of $D_{i,k,q+1}(u)$ is the first descendant of a
vertex of $D_{i,k,q}(u)$ that leaves the ball of radius $\ell_i$
centered at the latter vertex, provided that the ancestral line has
not already left the macroscopic annulus $\mathscr C_i$.

For $v\in D_{i,k,q}(u)$, define the corresponding local spatial block
by $
\mathcal{T}^{1,v}_c(B_{\ell_i})$.
We can see that
\begin{equation*}
\begin{split}
\mathcal T_c^{1,u}(\mathscr C_i)
\subseteq
\bigcup_{q\geq0}
\bigcup_{v\in D_{i,k,q}(u)}
\mathcal{T}^{1,v}_c(B_{\ell_i}).
\end{split}
\end{equation*}
Conditionally on the stopping line $D_{i,k,q}(u)$, the descendant
branching random walks rooted at its vertices are independent.

\begin{lemma}
\label{lemma-number-local-branching-blocks-general}
There exists a constant $C>0$ such that, for every $i\geq-I$, every
$k\geq0$, and every $u\in\eta_i^k$,
\begin{equation}
\label{eq-number-local-branching-blocks-general}
\mathbb E\bigg[
    \sum_{q\geq0}
    \#D_{i,k,q}(u)
    \,\bigg|\,
    \mathcal F_{\eta_i^k}
\bigg]
\leq
C\left(
    1+\frac{r_i^2}{\ell_i^2}
\right).
\end{equation}
\end{lemma}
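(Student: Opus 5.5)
The plan is to reduce the statement to a one-dimensional random walk computation via the many-to-one formula (Lemma~\ref{lemma-many-to-one-generation}), combined with the branching property at stopping lines (Lemma~\ref{lemma-branching-property-stopping-line}). Conditionally on $\mathcal F_{\eta_i^k}$, the subtree rooted at $u$ is a critical branching random walk started from $x_u\in\Sigma_i$. Every vertex $w\in D_{i,k,q}(u)$ is determined by its ancestral line only. Concretely, $w$ belongs to $D_{i,k,q}(u)$ exactly when its spatial path from $u$, read as a single walk path, has performed $q$ successive ``$\ell_i$-exits'' (the stopping times $\nu_1<\dots<\nu_q$ of Subsection~\ref{sec-fixed-time}, with $R=\ell_i$), $w$ sits at the time $\nu_q$, and the path stays in $\mathscr C_i$ up to that time. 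The line $D_{i,k,q}(u)$ is a stopping line, and the indicator of $\{w\in D_{i,k,q}(u)\}$ is a functional $F$ of the path $(V_c^1(y))_{u\preceq y\preceq w}$.

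Summing over generations and applying the many-to-one formula, I expect the identity
\begin{equation*}
\mathbb E\bigg[\sum_{q\geq0}\#D_{i,k,q}(u)\,\bigg|\,\mathcal F_{\eta_i^k}\bigg]
=\mathbf E_{x_u}\bigg[\sum_{q\geq0}\mathds 1_{\{\nu_q\leq \sigma\}}\bigg]
\leq \mathbf E_{x_u}\left[M+1\right],
\end{equation*}
where $\sigma$ is the exit time of the walk from $\mathscr C_i$ and $M:=\max\{q:\nu_q\leq\sigma\}$. One point needs care here. Criticality is used in the form $\mathbb E[Z_n]=1$ at every generation. Because a vertex is counted at most once per $q$ (distinct $q$ give distinct generations along the same line, while within fixed $q$ the line is an antichain), the sum over vertices and $q$ of the indicator equals the sum over $n$ and over $|w|=n$ of $\sum_q \mathds 1\{\nu_q=n\leq\sigma\}$. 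Many-to-one therefore gives exactly the walk expectation. The convention that $D_{i,k,0}=\{u\}$ contributes the term $q=0$.

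It then remains to bound $\sup_{x\in\Sigma_i}\mathbf E_x[M+1]\lesssim 1+r_i^2/\ell_i^2$. This is precisely claim~\eqref{eq-number-small-annular-pieces-general} from the proof of Proposition~\ref{prop-mixed-hitting-probability}, with $\sigma_i^0$ replaced by the exit time of $\mathscr C_i$, which is the same kind of time. I would repeat that argument. On $\{qr_i^2\leq\sigma<(q+1)r_i^2\}$ we have $M+1\leq\mathcal N^{(\ell_i)}_{(q+1)r_i^2}+1$. Cauchy--Schwarz then combines the second-moment bound of Remark~\ref{remark-second-moment-stopped-pieces}, namely $\mathbb E[(\mathcal N_n^{(\ell)}+1)^2]^{1/2}\lesssim 1+n/\ell^2$, with the exponential tail $\sup_x\mathbf P_x(\sigma\geq qr_i^2)\lesssim e^{-cq}$. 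The tail holds because $\mathscr C_i\subseteq B(z,2r_i)$ and the walk leaves a ball of radius $2r_i$ within time $r_i^2$ with probability bounded below, uniformly in the starting point. Summing $(1+(q+1)r_i^2/\ell_i^2)e^{-cq/2}$ over $q$ gives the bound with a constant independent of $i$, $k$ and $u$.

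I expect the main obstacle to be the first step: verifying cleanly that the recursively defined $D_{i,k,q}(u)$ corresponds, ancestral line by ancestral line, to the walk times $\nu_q$ killed at the exit of $\mathscr C_i$, so that many-to-one applies without overcounting. The subtle point is that the local-exit condition is relative to the moving center $V_c^1(v)$ of the parent block, which matches the renewal definition of $\nu_{j+1}$. The estimate on the walk side is already available in the paper.
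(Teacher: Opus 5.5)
Your proposal is correct and follows the paper's proof. Conditionally on $\mathcal F_{\eta_i^k}$, the many-to-one formula reduces the count to $\mathbf E_{x_u}[\mathcal N^{(\ell_i)}_{\sigma}+1]$ for a simple random walk, which is then bounded by Cauchy--Schwarz, the second-moment bound of Remark~\ref{remark-second-moment-stopped-pieces}, and the exponential tail of the exit time. The only difference is cosmetic: the paper kills the walk on exiting the larger ball $B(x_u,3r_i)\supseteq\mathscr C_i$ and gets an inequality directly, whereas you kill it on exiting $\mathscr C_i$ (strictly, since $w$ itself must lie in $\mathscr C_i$, your indicator should read $\nu_q<\sigma$, and using $\nu_q\leq\sigma$ only overcounts, which is harmless for an upper bound).
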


\begin{proof}
Conditionally on $\mathcal F_{\eta_i^k}$, translate the spatial
positions so that $x_u=0$. Let $(S_n)_{n\geq0}$ be a simple random
walk started from the origin. Define $
    \tau_0:=0$
and, recursively,
\begin{equation*}
    \tau_{q+1}
    :=
    \inf\left\{
        n>\tau_q:
        \|S_n-S_{\tau_q}\|>\ell_i
    \right\},
    \qquad q\geq0.
\end{equation*}
Let
\begin{equation*}
    \sigma_i
    :=
    \inf\left\{
        n\geq0:
        S_n\notin B_{3r_i}
    \right\}.
\end{equation*}

Because $\mathscr C_i\subseteq B(x_u,3r_i)$, the many-to-one formula,
applied successively at the local stopping lines, gives
\begin{equation}
\label{eq-many-to-one-local-generation-general}
\begin{split}
\mathbb E\left[
    \#D_{i,k,q}(u)
    \,\middle|\,
    \mathcal F_{\eta_i^k}
\right]
&\leq
\mathbf P_0\left(
    \tau_q<\sigma_i
\right).
\end{split}
\end{equation}

Define the number of $\ell_i$-pieces before time $n$ by
\begin{equation*}
    \mathcal N_n^{(\ell_i)}
    :=
    \max\left\{q\geq0:\tau_q\leq n
    \right\}.
\end{equation*}
Summing \eqref{eq-many-to-one-local-generation-general} over $q$
gives
\begin{equation}
\label{eq-total-local-generations-general}
\begin{split}
\mathbb E\bigg[
    \sum_{q\geq0}
    \#D_{i,k,q}(u)
    \,\bigg|\,
    \mathcal F_{\eta_i^k}
\bigg]
&\leq
\sum_{q\geq0}
\mathbf P_0\left(
    \tau_q<\sigma_i
\right)\leq
\mathbb E_0\left[
    \mathcal N_{\sigma_i}^{(\ell_i)}+1
\right].
\end{split}
\end{equation}

We now use the second-moment estimate for the number of stopped pieces
of a simple random walk established in
Remark~\ref{remark-second-moment-stopped-pieces}. For every $n\geq1$,
\begin{equation*}
\mathbb E_0\left[
    \left(
        \mathcal N_n^{(\ell_i)}+1
    \right)^2
\right]^{1/2}
\lesssim
1+\frac{n}{\ell_i^2};
\end{equation*}
see Remark \ref{remark-second-moment-stopped-pieces}.
Moreover, the exit time from $B_{3r_i}$ satisfies
\begin{equation*}
    \mathbf P_0\left(
        \sigma_i\geq jr_i^2
    \right)
    \lesssim
    e^{-cj},
    \qquad j\geq0.
\end{equation*}

Decomposing according to the value of $\sigma_i$, using the
monotonicity of $\mathcal N_n^{(\ell_i)}$, and applying the
Cauchy--Schwarz inequality, we obtain
\begin{equation*}
\begin{split}
\mathbb E_0\left[
    \mathcal N_{\sigma_i}^{(\ell_i)}+1
\right]
&=
\sum_{j\geq0}
\mathbb E_0\left[
    \left(
        \mathcal N_{\sigma_i}^{(\ell_i)}+1
    \right)
    \mathds{1}_{\{jr_i^2\leq\sigma_i\leq (j+1)r_i^2\}}
\right]\\
&
\leq
\sum_{j\geq0}
\mathbb E_0\left[
    \left(
        \mathcal N_{(j+1)r_i^2}^{(\ell_i)}+1
    \right)^2
\right]^{1/2}
\mathbf P_0\left(
    \sigma_i\geq jr_i^2
\right)^{1/2}
\\
&\lesssim
\sum_{j\geq0}
\left(
    1+
    \frac{(j+1)r_i^2}{\ell_i^2}
\right)e^{-cj/2}
\lesssim
1+\frac{r_i^2}{\ell_i^2}.
\end{split}
\end{equation*}
Together with \eqref{eq-total-local-generations-general}, this proves
the lemma.
\end{proof}

\subsection{Completion of the upper bound in Proposition~\ref{prop-hitting-several-brw}}

Let us define the exponents $\beta_p$ recursively as follows:
\begin{equation}
\label{eq-chi-beta-several-brw}
    \beta_1:=1,
    \qquad
    \beta_p:=2+\chi_p\beta_{p-1},
    \qquad
    \chi_p:=2p-1+\sum_{j=1}^{2p-2}\frac1j,
    \quad p\geq2 .
\end{equation}
Notice that $\chi_p=a_{2p-2}$ in Equation~\eqref{eq-a-m}.
\begin{proof}[Upper bound in Proposition~\ref{prop-hitting-several-brw}]
The case $p=2$ was proved in Section~\ref{section8}. Assume the result for $p-1$ and
choose $I$ so that $
    r_{-I}
    \asymp
    \operatorname{diam}(A)^{\chi_p\beta_{p-1}}$.
The choice \eqref{eq-definition-local-scale-general} ensures both $\ell_i\gtrsim \operatorname{diam}(A)^{\chi_p}$
and, for every local block root $v$,
\begin{equation*}
    \|z-V_c^1(v)\|
    \asymp r_i
    \geq
    C_{p-1}\left(c_1\ell_i+\operatorname{diam}(A)\right)^{\beta_{p-1}}.
\end{equation*}
Thus, since $\ell_i \gtrsim \text{diam}(A)^{\chi_p}=\text{diam}(A)^{a_{2p-2}}$, we can apply Equation~\eqref{eq-introduction-stopped-brw-capacity} of Theorem~\ref{theorem-main-capacity-introduction} for $N=2p$ and get 
\begin{equation*}
\mathbb E\left[
    \operatorname{Cap}_{d-4(p-1)}
    \left(\mathcal T^{1,v}_c(B_{\ell_i})+A\right)
\right]
\lesssim
\ell_i^2\operatorname{Cap}_{d-4p}(A).
\end{equation*}
Combining this estimate with
Lemma~\ref{lemma-number-local-branching-blocks-general} gives, for one
entrance particle $u\in\eta_i^k$,
\begin{equation}
\label{eq-one-annular-subtree-several-brw}
\begin{split}
&\mathbb P\left(z+\mathcal T_c^2+\cdots+\mathcal T_c^p\conn\mathcal T_c^{1,u}(\mathscr C_i)+A
    \;\middle|\;\mathcal F_{\eta_i^k}
\right)
\\
&\qquad \qquad\lesssim
\Big(1+\frac{r_i^2}{\ell_i^2}\Big)
\frac{\ell_i^2\operatorname{Cap}_{d-4p}(A)}
{r_i^{d-2(p-1)}}
\lesssim
r_i^{2p-d}\operatorname{Cap}_{d-4p}(A).
\end{split}
\end{equation}
We denote by
\begin{equation*}
\begin{split}
P_{0}^{(p)}
& :=
\mathbb{P}\left(
        z+\mathcal{T}_c^2+\cdots+\mathcal{T}_c^p\conn
            \mathcal{T}_c^1\cap B(z,r_{-I})+A
\right), \\ 
P_i^{(p)}
& :=
\mathbb{P}\left(
        z+\mathcal{T}_c^2+\cdots+\mathcal{T}_c^p \conn
        \mathcal{T}_c^{(1,i)}+A
\right).
\end{split}
\end{equation*}
After summing over the entrance lines in Equation~\eqref{eq-one-annular-subtree-several-brw} and using
\eqref{total-number-waves},
\begin{equation*}
P_i^{(p)}
\lesssim
r_i^{2p-d}\operatorname{Cap}_{d-4p}(A)
\left(
    \frac{r_i^{d-2}}{\|z\|^{d-2}}\mathds 1_{\{-I\leq i<0\}}
    +\mathds 1_{\{i\geq0\}}
\right).
\end{equation*}
Summing over $i \geq -I$, we get
\begin{equation}
\label{eq-annular-sum-several-brw}
    \sum_{i\geq-I}P_i^{(p)}
    \lesssim
    \frac{\operatorname{Cap}_{d-4p}(A)}{\|z\|^{d-2p}}.
\end{equation}

The inner argument is the same as for two branching random walks.
The only new inputs are
\begin{equation*}
\mathbb E\left[
    \left|\big(\mathcal T_c^2+\cdots+\mathcal T_c^p\big)
    \cap B(0,2r_{-I})\right|
\right]
\lesssim
r_{-I}^{2(p-1)}
\end{equation*}
and
\begin{equation*}
    \operatorname{Cap}_{d-4}(A)
    \lesssim
    \operatorname{diam}(A)^{4(p-1)}\operatorname{Cap}_{d-4p}(A).
\end{equation*}
Consequently,
\begin{equation*}
P_{0}^{(p)}
\lesssim
\frac{\operatorname{diam}(A)^{4(p-1)}r_{-I}^{2(p-1)}}{\|z\|^{d-2}}
\operatorname{Cap}_{d-4p}(A).
\end{equation*}
Since $r_{-I} \asymp \chi_p\, \beta_{p-1} $ and $\|z\| \gtrsim \text{diam}(A)^{\beta_p}$, then under the condition that $\beta_p=2+\chi_p\,\beta_{p-1}$, the last display
is bounded by $\frac{
    \operatorname{Cap}_{d-4p}(A)
}{
    \|z\|^{d-2p}
}$.
Together with \eqref{eq-annular-sum-several-brw}, this completes the
induction.
\end{proof}

\subsection{Mixed sums of random walk and branching random walk ranges}
\label{section-general-mixed-hitting}

We are now ready to prove Theorem \ref{proposition-main-hitting-introduction}.
We only highlight the main steps of the proof since it follows the same lines as in the previous subsection.

For the lower bound, we localize all the random walk ranges inside a ball and apply
Proposition~\ref{prop-hitting-several-brw}, followed by Proposition~\ref{prop-capacity-several-stopped-rw}. 

For any $R >0$, we have
\begin{equation*}
    \begin{split}
        \mathbb P\bigg(
        z+\sum_{i=1}^{m}\mathcal S^i
        +\sum_{j=1}^{k}\mathcal T_c^j
    \conn A
\bigg)& \geq \mathbb P\bigg(
        z
        +\sum_{j=1}^{k}\mathcal T_c^j
    \conn \sum_{i=1}^{m}\mathcal S^i(B_R)+A
\bigg)
    \end{split}
\end{equation*}
Choosing $R=\lfloor c \|z\|\rfloor$ with c sufficiently small so that we can use Corollaries \ref{prop-hitting-several-brw} and \ref{prop-capacity-several-stopped-rw} and get
\begin{equation*}
    \begin{split}
        \mathbb P\bigg(
        z
        +\sum_{j=1}^{k}\mathcal T_c^j
    \conn \sum_{i=1}^{m}\mathcal S^i(B_R)+A
\bigg) & \gtrsim\frac{\mathrm{Cap}_{d-4k-2m}(A)}{\|z\|^{d-2m-2k}}
    \end{split}
\end{equation*}

For the upper bound, we proceed by induction on $m$ and use the annular
decomposition of a random walk as done in Section \ref{subsubsection-one-rw-one-brw}.
We use the following exponents $\alpha_{m,k}$ defined by induction as follows:
and, for $m\geq0$ and $k\geq1$,
\begin{equation}\label{eq-alpha}
    \alpha_{0,k}:=\beta_k, \qquad     \alpha_{m,k}
    :=
    \alpha_{m-1,k}
    +
    \frac{m+2k-1}{m+k-1}.
\end{equation}
The case $m=0$ is precisely
Proposition~\ref{prop-hitting-several-brw}, which holds under the
condition $\|z\| \geq C_k \text{diam}(A)^{\beta_k}$, and therefore $\alpha_{0,k}=\beta_k$.

For the induction step, we choose the smallest scale in the annular
decomposition so that $r_{-I}\asymp\operatorname{diam}(A)^{\alpha_{m-1,k}}$, which allows us to apply the induction hypothesis to the remaining
$m-1$ random walk ranges.
For the inner contribution, arguing as above, we obtain
\begin{equation*}
    \begin{split}
        \mathbb{P}\bigg(z+\sum_{i=2}^{m}\mathcal{S}^i+\sum_{j=1}^k \mathcal{T}_c^j & \conn \mathcal{S}^1\cap B(z,r_{-I})+A \bigg)\\ & \lesssim \frac{\mathrm{Cap}_{d-2}(A)}{\|z\|^{d-2}}\mathbb{E}\bigg[\bigg|\Big(z+\sum_{i=2}^{m}\mathcal{S}^i+\sum_{j=1}^k\mathcal{T}_c^j\Big)\cap B(z,r_{-I})\bigg|\bigg] \\
        & \lesssim \mathrm{Cap}_{d-2m-4k}(A)\frac{\text{diam}(A)^{2m+4k-2}\,r_{-I}^{2(m-1+k)}}{\|z\|^{d-2}}  
    \end{split}
\end{equation*}
The last term is comparable to $\frac{\mathrm{Cap}_{d-2m-4k}(A)}{\|z\|^{d-2m-2k}}$ thanks to the relation \eqref{eq-alpha};
this explains the choice of the exponent $\alpha_{m,k}$.
\appendix

\section{Some Technical estimates}

We present here the proofs of some estimates that were used in the paper.

\subsection{On properties of the capacity: proof of Proposition~\ref{prop-basic-capacity-properties}}
\label{appendix-capacities}

Recall the definition~\eqref{eq-gamma-capacity-definition} of the $\gamma$-capacity.

\begin{proof}
\textup{(i)} The monotonicity \eqref{eq-capacity-monotonicity} follows immediately from
the definition of $\gamma$-capacity, since, whenever $A\subset B$, every
probability measure supported on $A$ is also a probability measure
supported on $B$.

\textup{(ii)}
For the subadditivity property \eqref{eq-capacity-subadditivity}, we refer
to \cite[Claim~2.1]{asselah2024branching}.

\textup{(iii)}
The identities in \eqref{eq-capacity-invariances} follow by translating
or reflecting a probability measure.

\textup{(iv)}
Since $A+B=\bigcup_{b\in B}(A+b)$, subadditivity and translation invariance imply \eqref{capofsum}.

\textup{(v)}
If $x,y\in A$, then we have
\begin{equation*}
    \frac{1}{(1+\|x-y\|)^\alpha}
    \geq
    \frac{1}{(1+\operatorname{diam}(A))^{\alpha-\beta}}
    \frac{1}{(1+\|x-y\|)^\beta}.
\end{equation*}
Taking energies $\mathcal{E}_{k_{\gamma}}(\nu)$, then the infima over probability measures $\nu$ on $A$, and
finally the reciprocals yields the inequality \eqref{comparisonof2cap}.
\end{proof}

\subsection{On random walks estimates: proof of Lemma~\ref{lemma-number-stopped-pieces}}\label{appendix-rw}

\begin{proof}
For every $i\geq 1$, set $
    \sigma_i
    :=
    \tau_i-\tau_{i-1}$.
By the strong Markov property, the random variables
$(\sigma_i)_{i\geq 1}$ are independent and identically distributed,
with the same law as
\begin{equation*}
    H_R
    :=
    \inf\left\{
        m\geq 1:
        \|S_m\|>R
    \right\}.
\end{equation*}

We first observe that there exist constants $c_0,p_0>0$, depending
only on the dimension, such that, uniformly in $R\geq 1$
\begin{equation}
\label{exit-time-lower-tail}
    \mathbb{P}\left(
        H_R\geq c_0R^2
    \right)
    \geq p_0.
\end{equation} Indeed, by Doob's maximal inequality applied
to the nonnegative submartingale
$(\|S_m\|^2)_{m\geq 0}$,
\begin{equation*}
\begin{split}
\mathbb{P}\left(
    H_R<c_0R^2
\right)
&\leq
\mathbb{P}\left(
    \max_{0\leq m\leq \lfloor c_0R^2\rfloor}
    \|S_m\|>R
\right)
\leq
\frac{
    \mathbb{E}\left[
        \|S_{\lfloor c_0R^2\rfloor}\|^2
    \right]
}{
    R^2
}
\leq
c_0.
\end{split}
\end{equation*}
So we can take $p_0=1-c_0$.
It follows from \eqref{exit-time-lower-tail} that
\begin{equation*}
\begin{split}
\mathbb{E}\left[
    \exp\left(
        -\frac{H_R}{R^2}
    \right)
\right]
&\leq
1-p_0\left(
    1-e^{-c_0}
\right)=:q,
\end{split}
\end{equation*}
where $q\in(0,1)$ is independent of $R$.

For every $k\geq 1$, the definition of $\mathcal{N}_n$ gives $
    \left\{
        \mathcal{N}_n\geq k
    \right\}
    =
    \left\{
        \tau_k\leq n
    \right\}$.
Therefore, by Markov's inequality,
\begin{equation}\label{remark-appendix}
\begin{split}
\mathbb{P}\left(
    \mathcal{N}_n\geq k
\right)
&=
\mathbb{P}\left(
    \exp\left(
        -\frac{\tau_k}{R^2}
    \right)
    \geq
    \exp\left(
        -\frac{n}{R^2}
    \right)
\right)
\\
&\leq
\exp\left(
    \frac{n}{R^2}
\right)
\mathbb{E}\left[
    \exp\left(
        -\frac{\tau_k}{R^2}
    \right)
\right]
\\
&=
\exp\left(
    \frac{n}{R^2}
\right)
\left(
    \mathbb{E}\left[
        \exp\left(
            -\frac{H_R}{R^2}
        \right)
    \right]
\right)^k
\leq
\exp\left(
    \frac{n}{R^2}
\right)q^k.
\end{split}
\end{equation}

Let $
    k_0
    :=
    \left\lceil
        \frac{2}{|\log q|}
        \left(
            1+\frac{n}{R^2}
        \right)
    \right\rceil$.
Then
\begin{equation*}
\begin{split}
\mathbb{E}\left[
    \mathcal{N}_n+1
\right]
&=
\sum_{k\geq 0}
\mathbb{P}\left(
    \mathcal{N}_n\geq k
\right)
\leq
k_0
+
\exp\left(
    \frac{n}{R^2}
\right)
\sum_{k\geq k_0}q^k
\lesssim
1+\frac{n}{R^2}.
\end{split}
\end{equation*}
This proves the lemma.
\end{proof}
\begin{remark}
\label{remark-second-moment-stopped-pieces}
The exponential-tail estimate used in the proof of
Lemma~\ref{lemma-number-stopped-pieces} also yields
\begin{equation*}
    \mathbb{E}\left[
        \left(
            \mathcal{N}_n+1
        \right)^2
    \right]^{1/2}
    \lesssim
    1+\frac{n}{R^2}.
\end{equation*}
Indeed, Equation~\eqref{remark-appendix} gives constants $q\in(0,1)$ such that $
    \mathbb{P}\left(
        \mathcal{N}_n\geq k
    \right)
    \leq
    \exp\left(
        \frac{n}{R^2}
    \right)q^k.$
The result then follows by writing $
    \mathbb{E}\left[
        \left(
            \mathcal{N}_n+1
        \right)^2
    \right]
    =
    \sum_{k\geq0}
    (2k+1)
    \mathbb{P}\left(
        \mathcal{N}_n\geq k
    \right)$
and splitting the sum at an index of order $n/R^2$.
\end{remark}

\paragraph{Acknowledgment.} The author would like to thank Amine Asselah and Yueyun Hu for helpful discussions and guidance. Many thanks to Tianyi Bai for explaining us the approach of \cite{baiperes}. Finally, it is a pleasure to thank Quentin Berger for having read the entire manuscript, for his advice which improved the presentation, and for his warm encouragements.
\paragraph{AI disclosure.}No AI assistance was used in this work.
\printbibliography
\end{document}

%% file: prefix.tex
\def\UseBibLatex{1}

\makeatletter
\def\input@path{{styles/}}
\makeatother

\newcommand{\UsePackage}[1]{%
  \IfFileExists{styles/#1.sty}{%
      \usepackage{styles/#1}%
   }{%
      \IfFileExists{../styles/#1.sty}{%
         \usepackage{../styles/#1}%
      }{%
         \usepackage{#1}%
      }%
   }%
}

\usepackage[T1]{fontenc}
\usepackage{lmodern}
\usepackage{textcomp}

\usepackage{amsmath}%
\usepackage{amssymb}%
\usepackage[table]{xcolor}%

\usepackage{todonotes}
\usepackage[in]{fullpage}%

\usepackage[amsmath,thmmarks]{ntheorem}%
\theoremseparator{.}%

\usepackage{titlesec}%
\titlelabel{\thetitle. }%
\usepackage{xcolor}%
\usepackage{mleftright}%
\usepackage{xspace}%
\usepackage{graphicx}
\usepackage{hyperref}%
\usepackage{mathtools}%
\usepackage{enumitem}

\usepackage{hyperref}%
\hypersetup{%
      unicode,
      breaklinks,%
      colorlinks=true,%
      urlcolor=[rgb]{0.25,0.0,0.0},%
      linkcolor=[rgb]{0.5,0.0,0.0},%
      citecolor=[rgb]{0,0.2,0.445},%
      filecolor=[rgb]{0,0,0.4},
      anchorcolor=[rgb]={0.0,0.1,0.2}%
}
\usepackage[ocgcolorlinks]{ocgx2}

\newtheorem{theorem}{Theorem}[section]

\newtheorem{lemma}[theorem]{Lemma}
\newtheorem{corollary}[theorem]{Corollary}
\newtheorem{proposition}[theorem]{Proposition}

\newtheorem{remark}[theorem]{Remark}%

\newcommand{\myqedsymbol}{\rule{2mm}{2mm}}

\theoremheaderfont{\em}%
\theorembodyfont{\upshape}%
\theoremstyle{nonumberplain}%
\theoremseparator{}%
\theoremsymbol{\myqedsymbol}%

\newtheorem{proof}{Proof:}%

\providecommand{\emphind}[1]{}%
\renewcommand{\emphind}[1]{\emph{#1}\index{#1}}

\definecolor{blue25emph}{rgb}{0, 0, 11}

\providecommand{\emphic}[2]{}
\renewcommand{\emphic}[2]{\textcolor{blue25emph}{%
      \textbf{\emph{#1}}}\index{#2}}

\providecommand{\emphi}[1]{}%
\renewcommand{\emphi}[1]{\emphic{#1}{#1}}

\definecolor{almostblack}{rgb}{0, 0, 0.3}

\providecommand{\emphw}[1]{}%
\renewcommand{\emphw}[1]{{\textcolor{almostblack}{\emph{#1}}}}%

\providecommand{\emphOnly}[1]{}%
\renewcommand{\emphOnly}[1]{\emph{\textcolor{blue25}{\textbf{#1}}}}

\newcommand{\remove}[1]{}%

\providecommand{\BibLatexMode}[1]{}
\providecommand{\BibTexMode}[1]{}

\ifx\UseBibLatex\undefined%
  \renewcommand{\BibLatexMode}[1]{}
  \renewcommand{\BibTexMode}[1]{#1}
\else
  \renewcommand{\BibLatexMode}[1]{#1}
  \renewcommand{\BibTexMode}[1]{}
\fi

\BibLatexMode{%
   \usepackage[bibencoding=utf8,style=alphabetic,backend=biber]{biblatex}%
   \UsePackage{my_biblatex}%
}

\numberwithin{figure}{section}%
\numberwithin{table}{section}%
\numberwithin{equation}{section}%

%% file: template.bib
@article{Zhu1,
  author  = {Zhu, Qingsan},
  title   = {On the critical branching random walk I: branching capacity and visiting probability},
  journal = {Probability Theory and Related Fields},
  year    = {2026},
  doi     = {10.1007/s00440-026-01530-9},
  url     = {https://doi.org/10.1007/s00440-026-01530-9},
  issn    = {1432-2064}
}

@article{asselah2023localtimescapacitytransient,
  author  = {Asselah, Amine and Schapira, Bruno and Sousi, Perla},
  title   = {Local times and capacity for transient branching random walks},
  journal = {Probability Theory and Related Fields},
  year    = {2026},
  volume  = {194},
  number  = {3},
  pages   = {1443--1493},
  doi     = {10.1007/s00440-025-01378-5},
  url     = {https://doi.org/10.1007/s00440-025-01378-5},
  issn    = {1432-2064}
}

@book{modernintro,
author = {Lawler, Gregory F. and Limic, Vlada},
title = {Random Walk: A Modern Introduction},
year = {2010},
isbn = {0521519187},
publisher = {Cambridge University Press},
address = {USA}
}

@article{FS1989,
author = {Fitzsimmons, P. J. and Salisbury, Thomas S.},
journal = {Annales de l'I.H.P. Probabilités et statistiques},
language = {eng},
number = {3},
pages = {325-350},
publisher = {Gauthier-Villars},
title = {Capacity and energy for multiparameter Markov processes},
url = {http://eudml.org/doc/77355},
volume = {25},
year = {1989},
}

@misc{baiperes,
      title={Minkowski sum of fractal percolation and random sets}, 
      author={Tianyi Bai and Xinxin Chen and Yuval Peres},
      year={2024},
      eprint={2412.16415},
      archivePrefix={arXiv},
      primaryClass={math.PR},
      url={https://arxiv.org/abs/2412.16415}, 
}

@book{ItoMcKean1974,
  author    = {It{\^o}, K. and McKean, H.},
  title     = {Diffusion Processes and Their Sample Paths},
  series    = {Grundlehren der mathematischen Wissenschaften},
  volume    = {125},
  publisher = {Springer-Verlag},
  address   = {Berlin},
  year      = {1974}
}

@incollection{Spitzer,
  author    = {Spitzer, Frank},
  title     = {Discussion on Professor Kingman's Paper},
  booktitle = {The Annals of Probability},
  volume    = {1},
  number    = {6},
  pages     = {900--909},
  year      = {1973},
  publisher = {Institute of Mathematical Statistics},
  doi       = {10.1214/aop/1176996799},
  url       = {https://projecteuclid.org/journals/annals-of-probability/volume-1/issue-6/Discussion-on-Professor-Kingmans-Paper/10.1214/aop/1176996799.full}
}

@book{grimmett-stirzaker,
  author    = {Grimmett, Geoffrey R. and Stirzaker, David R.},
  title     = {Probability and Random Processes},
  publisher = {Oxford University Press},
  edition   = {3},
  year      = {2001}
}

@article{asselah2024branching,
  author  = {Asselah, Amine and Okada, Izumi and Schapira, Bruno and Sousi, Perla},
  title   = {Branching random walks and Minkowski sum of random walks},
  journal = {Probability Theory and Related Fields},
  year    = {2025},
  volume  = {191},
  number  = {3-4},
  pages   = {1289--1322},
  doi     = {10.1007/s00440-024-01352-7}
}
